\newtheorem{thm}{Theorem}[section]
\newtheorem{cor}[thm]{Corollary}
\newtheorem{lem}[thm]{Lemma}
\newtheorem{prop}[thm]{Proposition}
\theoremstyle{definition}
\theoremstyle{remark}
\newtheorem{rem}[thm]{Remark}
\numberwithin{equation}{section}
\newcommand{\R}{\mathbb{R}}
\begin{document}

\title[]
{}

\subjclass[2010]{Primary: 35J61, 35J10}

\title[Semilinear elliptic equations with the pseudo-relativistic operator]
{Semilinear elliptic equations with the pseudo-relativistic operator on a bounded domain}

\author{Woocheol Choi}
\address{W. Choi, Department of Mathematics Education, Incheon National University, Incheon 22012, Korea}
\email{choiwc@inu.ac.kr}

\author{Younghun Hong}
\address{Y. Hong, Department of mathematics, Yonsei University, Seoul 03722, Korea}
\email{younghun.hong@yonsei.ac.kr}

\author{Jinmyoung Seok}
\address{J. Seok, Department of Mathematics, Kyonggi University, Suwon 16227, Korea}
\email{jmseok@kgu.ac.kr}

\begin{abstract}
We study the Dirichlet problem for the semilinear equations involving the pseudo-relativistic operator on a bounded domain,
\begin{equation*}
(\sqrt{-\Delta + m^2} - m)u =|u|^{p-1}u \quad \textrm{in}~\Omega,\end{equation*}
with the Dirichlet boundary condition $u=0$ on $\partial \Omega$. Here, $p \in (1,\infty)$ and the operator $(\sqrt{-\Delta + m^2} - m)$ is defined in terms of spectral decomposition. 
In this paper, we investigate existence and nonexistence of a nontrivial solution, depending on the choice of $p$, $m$ and $\Omega$.
Precisely, we show that $(i)$ if $p$ is not $H^1$ subcritical ($p \geq \frac{n+2}{n-2}$) and $\Omega$ is star-shaped, the equation has no nontrivial solution for all $m > 0$; $(ii)$ if $p$ is not $H^{1/2}$ supercritical ($1 <p \leq \frac{n+1}{n-1}$), then there exists a least energy solution for all $m>0$ and any bounded domain $\Omega$; $(iii)$ finally, in the intermediate range  ($\frac{n+1}{n-1}<p<\frac{n+2}{n-2}$), the problem has a nontrivial solution, provided that $m$ is sufficiently large and the problem 
\begin{equation}
-\Delta u = |u|^{p-1}u \quad \textrm{in}~\Omega, \qquad u =0\quad \textrm{on}~\partial \Omega
\end{equation}
admits a non-degenerate nontrivial solution, for example, when $\Omega$ is a ball or an annulus.
\end{abstract}

\maketitle

\keywords{Keywords: pseudo-relativistic operator; semilinear elliptic; Dirichlet problem} \\
\section{Introduction}
Let $n \geq 3$ and $p > 1$, and let $\Omega \subset \R^n$ be a smooth bounded domain. This paper is devoted to the study of the Dirichlet problem:
\begin{equation}\label{eq-main}
\left\{\begin{aligned} 
\mathcal{P}_mu &=|u|^{p-1}u &&\textrm{in}~\Omega,\\
u&=0&&\textrm{on}~\partial \Omega,
\end{aligned}
\right.\end{equation}
where $\mathcal{P}_m$ is the pseudo-relativistic operator given by $\sqrt{-\Delta + m^2} - m$ with a particle mass $m > 0$. 
On a bounded domain, there are several ways to define a nonlocal square root operator with the zero Dirichlet condition unlike the whole domain $\R^n$ case. 
In this paper, we adopt the definition using the spectral decomposition (see Section 2 below for the precise definition).

The pseudo-relativistic operator $\mathcal{P}_m$ appears in the relativistic theory of the quantum mechanics to describe stellar objects, e.g., boson and fermion stars, in astrophysics. We refer the readers to the fundamental works in \cite{LY1, LY2} on the stability of relativistic matter. Based on this physical motivation, there have been many studies conducted on nonlinear problems involving the pseudo-relativistic operator. We refer the readers to \cite{ACV0, ACV, CiS, CN1, CN2, M} for semilinear elliptic equations with the Hartree nonlinearity; \cite{A, ChS, TWY} for those with power-type nonlinearity; \cite{MNO, MN, Ts} for time evolution equations. The operator $\mathcal{P}_m$ on a bounded domain appears in the work of Abou Salem, Chen and Vougalter\,\cite{ACV0, ACV}, in which the authors investigated the semi-relativistic Schr\"odinger-Poisson system on a bounded domain.

The operator $\mathcal{P}_m$ is called {\it pseudo-relativistic}, because it interpolates the non-relativistic and the ultra-relativistic schr\"odinger operators,
i.e., $2m\mathcal{P}_m$ formally converges to $-\Delta$ as $m\to\infty$ as well as $\mathcal{P}_m$ converges to $\sqrt{-\Delta}$ as $m\to 0^+$. 
When $m$ is near infinity, the nonlinear problem \eqref{eq-main} thus can be thought of as a perturbation of the Lane-Emden equation, 
\begin{equation}\label{eq-2}
\left\{\begin{aligned} -\Delta u &= |u|^{p-1}u && \textrm{in}~\Omega,\\
u&=0&&\textrm{on}~\partial \Omega,
\end{aligned}
\right.
\end{equation}
which arises in astrophysics for describing the pressure and the density of a gas sphere in a hydrostatic equilibrium. Here, the power of nonlinearity $p$ stands for the polytropic index (see \cite{Cha, Ki}). On the other hand, as $m$ goes to $0^{+}$, the equation \eqref{eq-main} converges to the nonlocal problem 
\begin{equation}\label{eq-1}
\left\{\begin{aligned} \sqrt{-\Delta}\,u &= |u|^{p-1}u&&\textrm{in}~\Omega,
\\
u&=0&&\textrm{on}~\partial \Omega,
\end{aligned}
\right.
\end{equation}
where $\sqrt{-\Delta}$ corresponds to the square root of the Laplacian $-\Delta$ defined in \cite{CT}.

The purpose of this paper is to exploit existence and non-existence of a nontrivial solution to the pseudo-relativistic equation \eqref{eq-main}
and compare them with the results on the two limit equations \eqref{eq-2} and \eqref{eq-1} as $m\to\infty$ and $m\to 0^+$ respectively. 
Before stating our main theorems, we briefly recall the existence results for the limit problems.  
An important remark is that the above two problems have the different critical exponents, and thus the ranges of the exponent $p$ for existence and nonexistence of a nontrivial solution are different. 
Precisely, it is known that the problem \eqref{eq-2} has a positive solution if $1 < p < \frac{n+2}{n-2}$, but it has no nontrivial solution if $p \geq \frac{n+2}{n-2}$ and $\Omega$ is starshaped (see, e.g., \cite{QS}). 
The same results hold for \eqref{eq-1} if we replace $\frac{n+2}{n-2}$ with $\frac{n+1}{n-1}$ (see \cite{CT, T}).  As is well-known, the two critical exponents $\frac{n+2}{n-2}$ and $\frac{n+1}{n-1}$ are related to the critical Sobolev embeddings $H_0^1 (\Omega) \hookrightarrow L^{2n/(n-2)}(\Omega)$ and $H_0^{1/2} (\Omega) \hookrightarrow L^{2n/(n-1)}(\Omega)$.

Coming back to the problem \eqref{eq-main}, we now introduce the notion of a solution for the equation. In Section \ref{prelim}, it will be shown that the operator $\mathcal{P}_m$ is bounded from $H^1_0(\Omega)$ to $L^2(\Omega)$.
Thus, it is natural to say that a real valued function $u$ on $\Omega$ is a (strong) solution of $\eqref{eq-main}$ if $u \in H^1_0(\Omega)$, $|u|^{p-1}u \in L^2(\Omega)$ and $\mathcal{P}_mu = |u|^{p-1}u$ almost everywhere. 

Our first main theorem establishes existence of a positive nontrivial solution to \eqref{eq-main}
for all $m > 0$ and $1 < p \leq \frac{n+1}{n-1}$. This shall be achieved by searching for a nontrivial critical point of the action integral
\[
I_m(u) := \int_{\Omega}\frac12|(-\Delta +m^2)^{1/4}u|^2-\frac{m}{2}u^2+\frac{1}{p+1}|u|^{p+1}\,dx,
\]
whose Euler-Lagrange equation is \eqref{eq-main} (see Section \ref{prelim}).
\begin{thm}\label{thm-main-1} 
Suppose that $1<p\leq \frac{n+1}{n-1}$ and fix any $m > 0$. Then, the problem \eqref{eq-main} admits a positive nontrivial solution.
Moreover, it is contained in $C^{2,\alpha}(\overline{\Omega})$ for some $0<\alpha <1$ and minimizes the action integral $I_m$ among all the nontrivial solutions of \eqref{eq-main}.
\end{thm}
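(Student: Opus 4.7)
The plan is to realize the solution as a minimizer of $I_m$ restricted to the Nehari manifold, exploiting that $\mathcal{P}_m$ is comparable at high frequency to $\sqrt{-\Delta}$ (with critical exponent $\tfrac{n+1}{n-1}$) and at low frequency to $(2m)^{-1}(-\Delta)$. Using the Dirichlet spectral decomposition of $-\Delta$ on $\Omega$ with eigenpairs $\{(\lambda_k,\phi_k)\}_{k\ge 1}$, I would work in the Hilbert space
\[
\mathcal{H}_m=\Bigl\{u=\sum_{k\ge 1}c_k\phi_k \;:\; \|u\|_m^2 := \sum_{k\ge 1}\bigl(\sqrt{\lambda_k+m^2}-m\bigr)\,c_k^{2}<\infty\Bigr\},
\]
the natural form domain of $\mathcal{P}_m$. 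Because $\sqrt{\lambda+m^2}-m\asymp\sqrt{\lambda}$ as $\lambda\to\infty$, $\mathcal{H}_m$ is isomorphic to $H^{1/2}_0(\Omega)$, so the Sobolev embedding $\mathcal{H}_m\hookrightarrow L^{p+1}(\Omega)$ holds for $p+1\le\tfrac{2n}{n-1}$, and is compact when the inequality is strict. The functional $I_m$ is of class $C^1$ on $\mathcal{H}_m$ and its critical points are precisely the solutions of \eqref{eq-main}.

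Next I would introduce the Nehari manifold $\mathcal{N}_m=\{u\in\mathcal{H}_m\setminus\{0\} : \|u\|_m^2=\int_\Omega|u|^{p+1}\,dx\}$ and set $c_m=\inf_{\mathcal{N}_m}I_m$. On $\mathcal{N}_m$ one has $I_m(u)=(\tfrac12-\tfrac1{p+1})\|u\|_m^{2}$, and a Sobolev lower bound $\|u\|_m\ge\delta>0$ on $\mathcal{N}_m$ yields $c_m>0$ and bounded minimizing sequences. To enforce positivity I would replace $|u|^{p+1}$ by $(u_+)^{p+1}$ in the nonlinear term; any nonzero critical point $u$ of the modified functional satisfies $\mathcal{P}_m u=u_+^{p}\ge 0$, and testing with the negative part $u_-$ together with the strong maximum principle for $\mathcal{P}_m$ then forces $u>0$ in $\Omega$.

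In the strictly subcritical range $1<p<\tfrac{n+1}{n-1}$, compactness of the embedding gives strong $L^{p+1}$-convergence of a minimizing sequence along a subsequence, producing a positive minimizer on $\mathcal{N}_m$ and hence a solution of \eqref{eq-main}. \textbf{The main obstacle is the borderline case $p=\tfrac{n+1}{n-1}$}, where compactness of the embedding fails. I would handle it by a Brezis--Nirenberg-style argument: testing with a family of concentrating extremizers of the fractional Sobolev inequality (Aubin--Talenti type profiles) centered at an interior point of $\Omega$ and cut off away from $\partial\Omega$, and using the strict modewise inequality $\sqrt{\lambda_k+m^2}-m<\sqrt{\lambda_k}$ --- the effective gain from the subtracted mass $-m$ --- to produce $c_m<\tilde S^{n}/(2n)$, where $\tilde S$ is the best constant in the fractional Sobolev embedding. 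This strict inequality prevents bubbling, and a concentration-compactness decomposition then restores convergence of the minimizing sequence.

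Finally, since every nontrivial solution of \eqref{eq-main} lies on $\mathcal{N}_m$, the minimizer constructed above has the smallest energy among nontrivial solutions. For the $C^{2,\alpha}$ regularity, I would iterate the smoothing of $\mathcal{P}_m^{-1}$ (an order $-1$ nonlocal operator, equivalent to $(-\Delta)^{-1/2}$ modulo a more regular piece coming from the mass) through a Moser/Brezis--Kato scheme to reach $u\in L^\infty(\Omega)$, and then exploit the algebraic identity $-\Delta u=2m|u|^{p-1}u+\mathcal{P}_m\bigl(|u|^{p-1}u\bigr)$, which follows from $\mathcal{P}_m^2+2m\mathcal{P}_m=-\Delta$, combined with classical Schauder estimates for $-\Delta$ on $\Omega$, to conclude that $u\in C^{2,\alpha}(\overline{\Omega})$.
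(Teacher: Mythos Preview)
Your proposal is correct and follows the same overall strategy as the paper --- Nehari minimization, a Brezis--Nirenberg energy comparison in the critical case, and a Brezis--Kato bootstrap for regularity --- but the technical frameworks differ. The paper works entirely through the Caffarelli--Silvestre/Cabr\'e--Tan extension to the half-cylinder $\mathcal{C}=\Omega\times(0,\infty)$, using the local functional $I_{e,m}$ on $H^1_{0,L}(\mathcal{C})$; the key coercivity (their Proposition~2.4) and the critical-level estimate $L_{e,m}<\tfrac{1}{2n}\mathcal{S}_n^{-2n}$ are obtained by elementary local integrations by parts on the cylinder, with the gain coming from the boundary term $-m\int_\Omega U(x,0)^2\,dx$ of order $\lambda$ when tested against a truncated harmonic extension of the Aubin--Talenti bubble. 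Your approach is the direct spectral one on $\mathcal{H}_m\cong H^{1/2}_0(\Omega)$, exploiting the modewise inequality $\sqrt{\lambda_k+m^2}-m<\sqrt{\lambda_k}$; this is cleaner conceptually but in practice the bubble estimate still requires comparing the spectral $(-\Delta)^{1/4}$ on $\Omega$ with its $\mathbb{R}^n$ counterpart, which is typically done via (or is equivalent to) the extension anyway. For regularity, the paper bootstraps on the extended problem (their Lemma~2.1), while your algebraic identity $-\Delta u = 2m|u|^{p-1}u+\mathcal{P}_m(|u|^{p-1}u)$, coming from $\mathcal{P}_m^2+2m\mathcal{P}_m=-\Delta$, is an elegant alternative that reduces matters to classical Schauder theory once $u\in L^\infty$; both routes reach $C^{2,\alpha}(\overline{\Omega})$.
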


The main novelty of Theorem \ref{thm-main-1} lies on the $H^{1/2}$ critical case ($p = \frac{n+1}{n-1}$), which is twofold. First of all, a non-trivial solution is constructed without any assumption on the size of $m>0$. Indeed, from existence and non-existence of a non-trivial solution for the limit equations \eqref{eq-2} and \eqref{eq-1}, it is natural to speculate that when $p = \frac{n+1}{n-1}$, a non-trivial solution exists if $m$ is large enough, while a nontrivial solution does not exists if $m$ is sufficiently small. Such a speculation turns out to be true for a similar problem
\begin{equation}\label{eq-whole-domain}
\mathcal{P}_m u + \mu u = |u|^{p-1}u\quad \textrm{in}~\mathbb{R}^n
\end{equation}
on the whole domain. Precisely, in \cite{CHS}, it is shown that the equation \eqref{eq-whole-domain} with $p = \frac{n+1}{n-1}$ admits a nontrivial solution when $m$ is sufficiently large, but it does not admit any nontrivial solution in $L^\infty \cap H^{1/2}$ when $m \leq \mu$. However, Theorem \ref{thm-main-1} shows that unlike the whole domain case, the problem on a bounded domain may admit a nontrivial solution even for small $m$. Another new feature is that a solution is constructed as a minimizer for the action functional in the $H^{1/2}$ critical case ($p = \frac{n+1}{n-1}$), while for the problem \eqref{eq-whole-domain} on the whole domain, a solution is constructed via a contraction mapping argument with no further information about its variational character. Indeed, being a minimizer is in general crucial to explore interesting properties of a solution like uniqueness, symmetries and asymptotical behaviors. In the forthcoming work, we will investigate such properties of a solution in Theorem \ref{thm-main-1}.

The main strategy for proving Theorem \ref{thm-main-1} is to find an energy level $c$ at which a (PS) sequence of $I_n$, i.e., a sequence $\{u_n\}$ such that $I_m'(u_n) \to 0$ and $I_m(u_n) \to c$ as $n \to \infty$, is precompact. 
We shall observe that the energy gain raised by the term $\sqrt{-\Delta +m^2}\,u$ is overwhelmed by the energy loss resulting from the term $-mu$ for any $m > 0$, 
which consequently gets rid of the possibility of concentrating bubbles and allows us to obtain a nontrivial solution of \eqref{eq-main}.


Next, we state our second main result, which concerns about nonexistence. 
\begin{thm}\label{thm-ne} Assume that $p \geq \frac{n+2}{n-2}$, and $\Omega$ is star-shaped. 
Then, there exists no nontrivial $L^\infty$ bounded solution to \eqref{eq-main} for any $m > 0$.
\end{thm}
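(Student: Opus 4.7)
The approach I propose is a Pohozaev-type identity on the harmonic extension to the half-cylinder $\mathcal{C}:=\Omega\times(0,\infty)$. Writing $u=\sum_k c_k\phi_k$ in the Dirichlet eigenbasis $\{(\lambda_k,\phi_k)\}$ of $-\Delta_\Omega$, I would define
\begin{equation*}
U(x,y):=\sum_k c_k e^{-\sqrt{\lambda_k+m^2}\,y}\,\phi_k(x),
\end{equation*}
so that $-\Delta U+m^2 U=0$ in $\mathcal{C}$, $U=0$ on $\partial\Omega\times(0,\infty)$, $U|_{y=0}=u$, and $-\partial_y U|_{y=0}=\sqrt{-\Delta+m^2}\,u=|u|^{p-1}u+mu$. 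Before any integration by parts, a standard elliptic bootstrap using the $L^\infty$ bound promotes $u$ to $C^{2,\gamma}(\overline{\Omega})$ for some $\gamma\in(0,1)$, so that boundary traces of $U$ exist and all integrals over $\mathcal{C}$ converge thanks to the exponential decay of $U$ in $y$.

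The core step is to multiply $-\Delta U+m^2 U=0$ by the Pohozaev field $z\cdot\nabla U$ with $z=(x,y)$ and integrate over $\mathcal{C}$. The lateral boundary contributes $-\tfrac{1}{2}\int_0^\infty\!\int_{\partial\Omega}(x\cdot\nu)(\partial_\nu U)^2\,d\sigma\,dy$ (using $U=0$ there, so $\nabla U=(\partial_\nu U)\nu$), while the base $\Omega\times\{0\}$ gives $\int_\Omega\partial_y U(x,0)(x\cdot\nabla u)\,dx$, which equals $\tfrac{n}{p+1}\int_\Omega|u|^{p+1}+\tfrac{nm}{2}\int_\Omega u^2$ after a further integration by parts on $\Omega$. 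Combining this with the energy identity $\int_\mathcal{C}|\nabla U|^2+m^2\int_\mathcal{C}U^2=\int_\Omega|u|^{p+1}+m\int_\Omega u^2$ (obtained by multiplying by $U$) to eliminate $\int_\mathcal{C}|\nabla U|^2$ yields the reduced identity
\begin{equation*}
\Bigl(\tfrac{n}{p+1}-\tfrac{n-1}{2}\Bigr)\!\int_\Omega|u|^{p+1}+\tfrac{m}{2}\!\int_\Omega u^2-m^2\!\int_\mathcal{C} U^2=\tfrac{1}{2}\!\int_0^\infty\!\int_{\partial\Omega}(x\cdot\nu)(\partial_\nu U)^2\,d\sigma\,dy,
\end{equation*}
whose right-hand side is non-negative since $\Omega$ is star-shaped.

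To conclude I would apply the energy identity a second time to eliminate $\int_\Omega|u|^{p+1}$ from the left-hand side and rewrite it spectrally as
\begin{equation*}
\sum_k c_k^2\,\frac{\lambda_k\bigl(m-2\beta\sqrt{\lambda_k+m^2}\bigr)}{2\sqrt{\lambda_k+m^2}\,\bigl(\sqrt{\lambda_k+m^2}+m\bigr)}, \qquad \beta:=\tfrac{n-1}{2}-\tfrac{n}{p+1}.
\end{equation*}
The hypothesis $p\geq\frac{n+2}{n-2}$ is equivalent to $\beta\geq\tfrac{1}{2}$, which gives $2\beta\sqrt{\lambda_k+m^2}\geq\sqrt{\lambda_k+m^2}>m$ for every Dirichlet eigenvalue $\lambda_k>0$ and every $m>0$; hence each summand is strictly negative whenever $c_k\neq 0$. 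Thus the left-hand side is $\leq 0$, and equals the non-negative right-hand side only if $u\equiv 0$. The main obstacle is precisely this algebraic bookkeeping: the Pohozaev and energy identities must be combined so that, in the supercritical range $p\geq\frac{n+2}{n-2}$, the resulting quadratic form in $u$ is non-positive on every spectral mode, via the inequality $m\leq 2\beta\sqrt{\lambda_k+m^2}$ which characterizes exactly $\beta\geq\tfrac{1}{2}$. The regularity upgrade and convergence of the boundary integrals along the infinite cylinder are technical but routine given the spectral formula for $U$.
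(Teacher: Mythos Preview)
Your proof is correct, but it takes a genuinely different route from the paper's. The paper deliberately uses an \emph{anisotropic} multiplier $x\cdot\nabla_x U$ (and explicitly remarks ``not $(x,t)\cdot\nabla U$''), which produces distinct coefficients $\tfrac{n-2}{2}$ and $\tfrac{n}{2}$ in front of $\int_{\mathcal C}|\nabla_x U|^2$ and $\int_{\mathcal C}|\partial_t U|^2$. After combining with the Nehari identity, the paper invokes the trace inequality $m\int_\Omega u^2\le m^2\int_{\mathcal C}U^2+\int_{\mathcal C}|\partial_t U|^2$ to force the lateral boundary integral to vanish, and then concludes via unique continuation. By contrast, you use the full isotropic multiplier $(x,y)\cdot\nabla U$, substitute the energy identity twice, and diagonalize the resulting quadratic form in the Dirichlet eigenbasis, obtaining the explicit coefficients $\lambda_k(m-2\beta\mu_k)/[2\mu_k(\mu_k+m)]$ with $\mu_k=\sqrt{\lambda_k+m^2}$; the hypothesis $p\ge\tfrac{n+2}{n-2}\Leftrightarrow\beta\ge\tfrac12$ then gives strict negativity mode by mode.

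What each buys: your spectral computation is self-contained and avoids the unique continuation step altogether, and it makes the threshold $\beta=\tfrac12$ (i.e.\ $p=\tfrac{n+2}{n-2}$) completely transparent. The paper's approach is more PDE-flavored: it never needs the explicit spectral formula for the extension $U$, and the trace inequality it uses would carry over to settings where such a diagonalization is unavailable. Both routes rest on the same exponential decay of $U$ in $y$ to justify the integrations by parts over the infinite cylinder.
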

In order to obtain this result, we shall devise an anisotropic Pohozaev identity for the problem \eqref{eq-lm}.
We remark that Theorem \ref{thm-ne} does not cover the range $p \in (\frac{n+1}{n-1},\, \frac{n+2}{n-2})$.
In fact, at the limit $m \to \infty$ the critical exponent of the limit problem \eqref{eq-2} is not $\frac{n+1}{n-1}$, but rather $\frac{n+2}{n-2}$. 
Thus, one would expect existence of solutions even for $p \in (\frac{n+1}{n-1},\, \frac{n+2}{n-2})$, at least when $m > 0$ is sufficiently large.

The last main result of this paper confirms this if we further assume the existence of a non-degenerate nontrivial solution to the limit equation \eqref{eq-2}.
We say a solution $u_{\infty} \in H_0^1 (\Omega)$ to \eqref{eq-2} is \textit{non-degenerate} if the linearized equation of \eqref{eq-2} at $u_\infty$, i.e.,
\begin{equation}
\left\{ \begin{array}{rll}
-\Delta v =&p |u_{\infty}|^{p-1} v &\quad \textrm{in}~\Omega,
\\
v=&0&\quad \textrm{on}~\partial \Omega
\end{array}
\right.
\end{equation}
admits only the trivial solution $v=0$ in $H_0^1 (\Omega)$.
\begin{thm}\label{thm-3} 
Let $\frac{n+1}{n-1} < p < \frac{n+2}{n-2}$. Suppose that the equation \eqref{eq-2} admits a nontrivial non-degenerate solution $u_\infty$. 
Then, there exists a large $m_0 >0$ such that if $m > m_0$, the problem \eqref{eq-main} has a nontrivial solution $u_m \in W^{1,n}_0(\Omega)$ which satisfies 
\begin{equation}
\|(2m)^{\frac{1}{p-1}}u_m - u_{\infty}\|_{W_0^{1,n} (\Omega)} \leq \left\{\begin{aligned} &\frac{C}{m^2} \quad \text{if } p > 2 \\ & \frac{C}{m} \quad \text{if } 1 < p \leq 2,\end{aligned}\right.
\end{equation}
where $C$ is a positive constant independent of $m$.
\end{thm}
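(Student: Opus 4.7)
\smallskip
\noindent\textbf{Proof plan.} The plan is a perturbative bifurcation argument around $u_\infty$, based on the formal limit $2m\mathcal{P}_m \to -\Delta$ as $m\to\infty$. Introduce $A_m := 2m\mathcal{P}_m$ and rescale $v := (2m)^{1/(p-1)} u$; then \eqref{eq-main} is equivalent to
\[
A_m v = |v|^{p-1} v \text{ in } \Omega, \qquad v = 0 \text{ on } \partial\Omega.
\]
We look for $v_m = u_\infty + w_m$ with small $w_m \in W^{1,n}_0(\Omega)$. Subtracting the Lane-Emden equation and writing $L := -\Delta - p|u_\infty|^{p-1}$, the equation becomes
\[
L w = Q(w) + (-\Delta - A_m)(u_\infty + w),
\]
where $Q(w) := |u_\infty + w|^{p-1}(u_\infty + w) - |u_\infty|^{p-1}u_\infty - p|u_\infty|^{p-1} w$ vanishes quadratically at $w=0$. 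The non-degeneracy hypothesis gives $\ker L = \{0\}$ in $H^1_0$; combined with $u_\infty \in C^{2,\alpha}(\overline\Omega)$ (elliptic bootstrapping of \eqref{eq-2}) and Fredholm theory applied to the compact perturbation $L - (-\Delta) = -p|u_\infty|^{p-1}\cdot$, this upgrades to an isomorphism $L : W^{1,n}_0(\Omega) \to W^{-1,n}(\Omega)$. The task reduces to the fixed-point equation $w = \Phi_m(w) := L^{-1}\bigl(Q(w) + (-\Delta - A_m)(u_\infty + w)\bigr)$.

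The quantitative core is an operator expansion of $A_m$ around $-\Delta$. Let $f_m(\lambda) := 2m(\sqrt{\lambda + m^2} - m) - \lambda$; one checks $f_m(0) = f_m'(0) = 0$ and $|f_m''(\lambda)| \leq 1/(2m^2)$ for $\lambda \geq 0$, whence $|f_m(\lambda)| \leq \lambda^2/(4m^2)$. The Dirichlet spectral calculus for $-\Delta$ then yields $\|(A_m + \Delta) z\|_{L^2(\Omega)} \leq C m^{-2} \|(-\Delta)^2 z\|_{L^2(\Omega)}$, which can be transferred to $W^{-1,n}$ by Sobolev embedding. When $p > 2$, the Nemytskii map $t \mapsto |t|^{p-1}t$ is of class $C^2$, hence $-\Delta(|u_\infty|^{p-1}u_\infty)$ lives in every $L^q$, and applied to $z = u_\infty$ one obtains the consistency estimate $\|(-\Delta - A_m) u_\infty\|_{W^{-1,n}} = O(1/m^2)$. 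When $1 < p \leq 2$, the map is only $C^{1,p-1}$ and the derivatives of $|u_\infty|^{p-1}u_\infty$ develop a non-integrable singularity near $\{u_\infty = 0\}$; one must fall back on the first-order operator bound $\|A_m - (-\Delta)\|_{L^2 \to L^2} \leq C/m$, which accounts for the weaker rate $O(1/m)$ in the theorem.

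A standard Banach fixed-point argument then closes the loop. On the ball $B_r := \{w \in W^{1,n}_0(\Omega) : \|w\|_{W^{1,n}_0} \leq r\}$ with $r = C/m^2$ (resp.\ $C/m$), the estimate on $\Phi_m(0)$ comes from the previous paragraph, the bound $\|Q(w_1) - Q(w_2)\|_{W^{-1,n}} \lesssim (\|w_1\| + \|w_2\|)\|w_1 - w_2\|$ follows from the second-order vanishing of $Q$, and the smallness of $A_m + \Delta$ controls the remaining Lipschitz contribution. For $m$ large, $\Phi_m$ is a strict contraction on $B_r$; the unique fixed point $w_m$ yields $v_m = u_\infty + w_m$ (nontrivial for $m$ large, since $\|w_m\|_{W^{1,n}_0}$ is strictly smaller than $\|u_\infty\|_{W^{1,n}_0}$), and undoing the rescaling gives $u_m = (2m)^{-1/(p-1)} v_m$ with the claimed decay.

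The principal technical obstacle is the simultaneous control of $A_m + \Delta$ in a space compatible with the Nemytskii treatment of $Q(w)$. On a bounded domain, the Dirichlet eigenfunctions of $-\Delta$ are not smooth up to $\partial\Omega$, so pointwise operator bounds must be extracted from the $L^2$ spectral estimates above via Sobolev embedding and the $C^{2,\alpha}$ regularity of $u_\infty$; when $p \leq 2$, the limited smoothness of $t \mapsto |t|^{p-1}t$ then propagates through both the consistency error and the Lipschitz estimates for $Q$, producing the precise dichotomy $1/m^2$ versus $1/m$ in the statement.
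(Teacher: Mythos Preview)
Your overall architecture---rescale, write $v_m=u_\infty+w$, and run a contraction mapping using the non-degeneracy of $u_\infty$---matches the paper's. But the way you set up the fixed-point map contains a genuine gap.

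You linearize around the \emph{limit} operator $L=-\Delta-p|u_\infty|^{p-1}$ and push the discrepancy $(-\Delta-A_m)$ entirely to the right-hand side, acting on $u_\infty+w$. For the contraction step you then need $(-\Delta-A_m):W^{1,n}_0\to W^{-1,n}$ (or even $H^1_0\to H^{-1}$) to have small operator norm as $m\to\infty$. It does not. With $f_m(\lambda)=2m(\sqrt{\lambda+m^2}-m)-\lambda=-\lambda^2/(\sqrt{\lambda+m^2}+m)^2$, the relevant multiplier is $f_m(\lambda)/\lambda=\lambda/(\sqrt{\lambda+m^2}+m)^2$, and $\sup_{\lambda>0}|f_m(\lambda)/\lambda|=1$ for every $m$; the Dirichlet eigenvalues run to infinity, so the operator norm does not decay. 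Relatedly, your assertion $\|A_m-(-\Delta)\|_{L^2\to L^2}\le C/m$ is false: $f_m(\lambda)$ is unbounded in $\lambda$, so $A_m-(-\Delta)$ is not even bounded on $L^2$. The correct first-order bound is $\|(A_m-(-\Delta))z\|_{L^2}\le Cm^{-1}\|(-\Delta)^{3/2}z\|_{L^2}$, which is fine for the \emph{source} term $(-\Delta-A_m)u_\infty$ (this is where the $W^{3,q}$ vs.\ $W^{4,q}$ regularity of $u_\infty$ and the $1/m$ vs.\ $1/m^2$ dichotomy come from), but useless for the Lipschitz term $(-\Delta-A_m)w$ since $w$ only lies in $W^{1,n}_0$.

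The paper avoids this by linearizing around the $m$-dependent operator: it inverts $2m\mathcal{P}_m-p|u_\infty|^{p-1}$ itself, writing it as $(2m\mathcal{P}_m)\bigl[I-(2m\mathcal{P}_m)^{-1}p|u_\infty|^{p-1}\bigr]$ and comparing the bracket to $I-(-\Delta)^{-1}p|u_\infty|^{p-1}$ via Fredholm theory. The point is that the \emph{inverses} are close, $\|(2m\mathcal{P}_m)^{-1}-(-\Delta)^{-1}\|_{L^q\to L^q}=O(m^{-2})$, even though the operators themselves are not; this is obtained through a H\"ormander--Mikhlin multiplier theorem adapted to the Dirichlet Laplacian on bounded domains (Duong--Sikora--Yan), which also handles the passage from $L^2$ spectral calculus to $L^q$ that your ``transfer by Sobolev embedding'' does not cover for $n\ge5$. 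With this reformulation the only error term is $[(-\Delta)-2m\mathcal{P}_m]u_\infty$, which your regularity discussion treats correctly.
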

\begin{rem}
The existence of a nontrivial non-degenerate solution to \eqref{eq-2} is known, for example, if one of the following holds:
\begin{enumerate}[$(i)$]
\item The dimension is two and the domain is convex and $p \in (1, \infty)$ \cite{Lin}.
\item the domain $\Omega$ is a ball or an annulus; and $p \in (1,\, \frac{n+2}{n-2})$ \cite{AP, BCGP}.
\item the domain $\Omega$ is convex and is symmetric about coordinate axes after a rigid motion; and $p$ is smaller but sufficiently close to $\frac{n+2}{n-2}$ \cite{G}.
\end{enumerate}
\end{rem}
We remark that the functional $I_m$ is also well defined on $H^1_0(\Omega)$ for $p \in (\frac{n+1}{n-1},\, \frac{n+2}{n-2})$, but the variational approach is not suitable to construct a nontrivial solution,
because it does not seem possible to obtain the $H^1$ boundedness of a (PS) sequence, due to the fact that $H^{1/2}$ norm appearing in $I_m$ cannot control $H^1$ norm.

We construct a non-trivial solution for \eqref{eq-main} by the alternative approach proposed in our earlier work \cite{CHS}. The key ingredient is existence of a non-degenerate solution to the limit equation \eqref{eq-2}, which is assumed in the theorem, because it allows us to make use of the contraction mapping principle to find a solution to \eqref{eq-main} near the non-degenerate solution. 
In this analysis, the H\"ormander-Mikhlin Theorem on Fourier multiplier operators is helpful to cover the full range $1<p<\frac{n+2}{n-2}$, because it gives some $L^p$ estimates for the pseudo-relativistic operator. 
We remark that as mentioned in \cite{CHS}, only a shorter range $1<p\leq \frac{n}{n-2}$ can be included without the H\"ormander-Mikhlin Theorem. In our setting, the domain under consideration is bounded so the standard H\"ormander-Mikhlin theorem cannot be applied directly. 
We resolve this difficulty by invoking the generalized H\"ormander-Mikhlin theorem in Duong, Sikora and Yan \cite{DSY}, which includes that on a bounded domain.

The rest of this paper is organized as follows. In Section \ref{prelim}, we present some preliminary results concerning relevant function spaces,
a precise definition of the operator $\mathcal{P}_m$ as well as a localization of  nonlocal operator $\mathcal{P}_m$.
Well-defined notions of solution to \eqref{eq-main} are also given.  
In Section \ref{sec-nonex}, we prove the nonexistence result of Theorem \ref{thm-ne} by establishing an anisotropic Pohozaev identity.  
In Section \ref{sec-ex-sub} and Section \ref{sec-ex-crit}, we prove the existence result of Theorem \ref{thm-main-1}.
Finally, Section \ref{sec-ex-sup} is devoted to dealing with the $H^{1/2}$ supercritical case with $m$ near infinity. 
\smallskip

\noindent \textbf{Notations.}
\noindent Here, we list some notations that will be used throughout the paper.

\noindent - $B_n (x,r) = \{ y \in \mathbb{R}^n : |y-x| < r\}.$

\noindent - For a domain $D \subset \mathbb{R}^n$, the map $\nu = (\nu_1, \cdots, \nu_n): \partial D \to \mathbb{R}^n$ denotes the outward unit normal vector on $\partial D$.

\noindent - $|S^{n-1}| = 2\pi^{n/2}/\Gamma(n/2)$ denotes the Lebesgue measure of $(n-1)$-dimensional unit sphere $S^{n-1}$.

\noindent - The letter $z$ represents a variable in the $\mathbb{R}^{n+1}$. Alternatively, this is written as $z = (x,t)$ with $x \in \mathbb{R}^n$ and $t \in \mathbb{R}$.

\noindent - $\mathbb{R}^{n+1}_{+} = \{ (x_1, \cdots, x_{n+1}) \in \mathbb{R}^{n+1} : x_{n+1} >0\}.$

\noindent - $dS$ stands for the surface measure. In addition, a subscript attached to $dS$ (such as $dS_x$ or $dS_z$) denotes the variable of the surface.

\noindent - $C > 0$ is a generic constant that may vary from line to line.

\section{Preliminary results}\label{prelim}
In this section, we prepare some preliminary results that are relevant to our problem. First, we provide the definition of the pseudo-relativistic operator, and describe the notions of solutions. 
Next, we recall the extension problem, which localizes the nonlocal equation \eqref{eq-main}, and we provide regularity results for \eqref{eq-main}. 
In the last of the section, we recall the sharp Sobolev-trace inequality and some related entire problems, and we prove an interesting inequality in Proposition \ref{lem-ce-1}, which is essential for proving the existence result of Theorem \ref{thm-main-1}. 

\subsection{pseudo-relativistic operator and notions of solutions}

Here, we define the square root operator $\sqrt{-\Delta +m^2}$ with the zero Dirichlet condition for $m \geq 0$. 
Let $\{\lambda_n,\, \phi_n\}_{n=1}^\infty$ be the complete $L^2$ orthonormal system of eigenvalues and eigenfunctions of
\[
\left\{\begin{aligned}
-\Delta \phi &= \lambda \phi &&\text{in } \Omega \\
\phi &= 0 &&\text{on } \partial\Omega.
\end{aligned}\right.
\]
For any $u \in L^2(\Omega)$, we have the unique decomposition $u = \sum_{i=1}^\infty c_i\phi_i$ in $L^2(\Omega)$. 
It is well known that 
\[
H^1_0(\Omega) = \left\{ u \in L^2(\Omega) ~\Big|~ \sum_{i=1}^\infty c^2_i\lambda_i < \infty \right\}. 
\]
For $m \geq 0$, we define 
\[
\sqrt{-\Delta+m^2}\,u := \sum_{i=1}^\infty c_i\sqrt{\lambda_i+m^2}\,\phi_i.
\]
and denote $\mathcal{P}_m :=\left(\sqrt{-\Delta +m^2} -m\right)$.  Then, the operator $\mathcal{P}_m$ is a continuous map from $H^1_0(\Omega)$ to $L^2(\Omega)$.
Let $H^{1/2}_0(\Omega)$ be the set $\{u\in L^2(\Omega) ~|~ \sum_{i=1}^\infty c_i^2\lambda_i^{1/2} < \infty,\, u = \sum_{i=1}^\infty c_i\phi_i \}$.
For any $p > 1$, we say a function $u$ is a \textit{(strong) solution} of \eqref{eq-main} if $u$ and $|u|^{p-1}u$ belong to $H^1_0(\Omega)$ and $L^2(\Omega)$ respectively, and they satisfy \eqref{eq-main} almost everwhere in $\Omega$. 
Similarly, one can define a one-fourth power operator of the pseudo-relativistic operator by
\[
(-\Delta+m^2)^{1/4}u := \sum_{i=1}^\infty c_i(\lambda_i+m^2)^{1/4}\phi_i
\]
We say that a function $u$ is a \textit{weak solution} of \eqref{eq-main} if $u$ and $|u|^{p-1}u$ belong to $H^{1/2}_0(\Omega)$ and $L^{\frac{2n}{n+1}}(\Omega)$ respectively, and they satisfy
\[
\int_{\Omega}(-\Delta+m^2)^{1/4}u\cdot (-\Delta+m^2)^{1/4}v-muv +|u|^{p-1}uv\,dx = 0, \quad \text{for all } v \in H^{1/2}_0(\Omega).
\]

It is obvious that any strong solution is a weak solution and any weak solution $u$ with the property $u \in H^1_0(\Omega),\, |u|^{p-1}u \in L^2(\Omega)$ is a strong solution. 
For the $H^{1/2}$ critical or subcritical range of $p$, i.e., $1 < p \leq \frac{n+1}{n-1}$, it is also easy to see that the set of weak solutions for \eqref{eq-main} is the same as the set of critical points of the functional
\[
I_m(u) := \int_{\Omega}\frac12|(-\Delta +m^2)^{1/4}u|^2-\frac{m}{2}u^2+\frac{1}{p+1}|u|^{p+1}\,dx,
\]
which is continuously differentiable on $H^{1/2}_0(\Omega)$.
A weak solution $u$ of \eqref{eq-main} is said to be \textit{of least energy} if $u$ satisfies
\[
I_m(u) = \inf\{I_m(v) ~|~  v \not\equiv 0,\, I_m'(v) = 0\}.
\]

\subsection{Localization}
One ingredient for Theorem \ref{thm-main-1} is to change the original problem \eqref{eq-main} to an equivalent problem which contains only local differential operators. 
This kind of technique was introduced by Caffarelli-Silvestre \cite{CS} for the localization of the fractional Laplacians $(-\Delta)^s$ on the whole domain $\R^n$.
The localization for the spectral fractional Laplacian on a bounded domain was given by Cabre-Tan \cite{CT}. 
The localization of the problem turns out to be powerful when we use various useful tools, such as the Pohozaev type identities, the Kelvin transform, and the moving plane method.
In this subsection, we briefly review it. 

We consider a cylinder $\mathcal{C}:= \Omega \times \R_+$. The symbol $\partial_L\mathcal{C}$ denotes the lateral boundary of $\mathcal{C}$, defined by $\{(x,t) \in \mathcal{C} ~|~ x \in \partial\Omega,\, t > 0\}$. 
We shall work in the space 
\begin{equation*}
H_{0,L}^1 (\mathcal{C}) = \{ U \in H^1 (\mathcal{C}) ~|~ U(x,t) = 0 \quad \textrm{on}~\partial_{L} \mathcal{C}\},
\end{equation*}
which is equipped with the norm
\[
\|U\| := \left(\int_{\mathcal{C}}|\nabla U(x,t)|^2\,dxdt\right)^{1/2}.
\]

For a given $u \in H_0^{1/2}(\Omega)$, consider a unique function $U \in H_{0,L}^1 (\mathcal{C})$ satisfying
\begin{equation}\label{extension}
\left\{\begin{array}{lll}
(-\Delta + m^2) U(x,t) =0 &\quad \textrm{in}~\mathcal{C},
\\
U(x,t) = 0&\quad \textrm{on}~\partial_L \mathcal{C},
\\
U(x,0) = u(x) &\quad \textrm{on}~\Omega \times \{0\}.
\end{array}
\right.
\end{equation}
Then, as in \cite[Proposition 2.2]{CT} and \cite[Theorem 5]{A}, the following equality holds in distributional sense:
\begin{equation*}
\sqrt{-\Delta +m^2} \,u (x) = \frac{\partial}{\partial \nu} U (x,0)\quad x \in \Omega.
\end{equation*}
Thus, our study of the problem \eqref{eq-main} can be transformed into the study of the following localized problem:
\begin{equation}\label{eq-lm}
\left\{\begin{array}{ll} 
(-\Delta + m^2) U(x,t)= 0&\quad \textrm{in}~\mathcal{C},
\\
U(x,t) = 0&\quad \textrm{on}~\partial_{L}\mathcal{C},
\\
\partial_{\nu} U(x,0) = mU(x,0) + |U(x,0)|^{p-1}U(x,0) &\quad \textrm{on}~\Omega \times \{0\},
\end{array}
\right.
\end{equation}
which is the Euler-Lagrange equation of the functional
\begin{equation}\label{functional-ext}
I_{e,m}(U) = \frac12\int_{\mathcal{C}}|\nabla U(x,t)|^2+m^2U(x,t)^2\,dxdt -\int_{\Omega}\frac{m}{2}U(x,0)^2+\frac{1}{p+1}|U(x,0)|^{p+1}\,dx
\end{equation}
defined on $H^1_{0,L}(\mathcal{C})$.
In other words, we may find a critical point of $I_{e,m}$ in order to find a weak solution of \eqref{eq-main}.
In addition, we note that
\[
I_m(u) = I_{e,m}(U),
\]
whenever $U$ is an extension of $u$ given by \eqref{extension}. 
Therefore, the least energy critical point of $I_{e,m}$ corresponds to the least energy critical point of $I_m$.

\subsection{Regularity of weak solutions} Here, we are concerned with the regularity property related to the problem \eqref{eq-lm}. 
Indeed, the required regularity can be demonstrated through minor modifications to the arguments in the proofs of Proposition 3.1 and Theorem 5.2 in \cite{CT}, 
where the corresponding results were obtained for the case that $m=0$.
\begin{lem}\label{lem-2-1}
Assume that $v \in H_{0,L}^1 (\mathcal{C})$ is a weak solution of
\begin{equation*}
\left\{\begin{array}{rll}
-\Delta v + m^2 v &=0&\textrm{in}~\mathcal{C},
\\
v&=0&\textrm{on}~\partial \Omega \times [0,\infty),
\\
\partial_{\nu} v&=f&\textrm{on}~\Omega \times \{0\}
\end{array}
\right.
\end{equation*}
in the sense that 
\begin{equation}
\int_{\mathcal{C}} \nabla v \cdot \nabla \phi + m^2 v \,\phi\, dx dt = \int_{\Omega} f (x) \phi (x,0) dx\quad \forall ~\phi \in H_{0,L}^1 (\mathcal{C}).
\end{equation}
Then, we have the following regularity results. 
\begin{enumerate}
\item If $f \in L^{q}(\Omega)$ and $v \in L^q([0,R]\times \Omega)$ for some $q >n+1$ and $R > 0$, then $v \in C^{\alpha} ([0,R/2]\times\overline\Omega)$ for some $\alpha \in (0,1)$.
\item If $v \in C^\alpha([0,R]\times \overline\Omega)$, $f \in C^{\alpha} (\overline{\Omega})$, and $f|_{\partial \Omega} \equiv 0$ for some $R > 0$ and $\alpha \in (0,1)$, then $v \in C^{1,\alpha}([0,R/2]\times\overline\Omega)$.
\item If $v \in C^{1,\alpha}([0,R]\times \overline\Omega)$, $f \in C^{1,\alpha}(\overline{\Omega})$, and $f|_{\partial \Omega} \equiv 0$ for some $R > 0$ and $\alpha \in (0,1)$, then $v\in C^{2,\alpha}([0,R/2]\times\overline\Omega)$.
\end{enumerate}
\end{lem}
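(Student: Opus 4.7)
The plan is to treat the term $m^2 v$ as a lower-order perturbation and reduce each of the three statements to the corresponding regularity theorem for the $m=0$ case established in \cite{CT}. Rewrite the equation as $-\Delta v = -m^2 v$ in $\mathcal{C}$, keeping the mixed Dirichlet/Neumann boundary conditions unchanged; at each regularity level, $-m^2 v$ inherits at least the regularity of $v$, so a standard bootstrap closes each step without introducing a new obstacle beyond those present for $m=0$.

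For (1), I first note that $-m^2 v \in L^q([0,R]\times\Omega)$ since $v$ is, while the Neumann datum $f$ lies in $L^q(\Omega)$ with $q > n+1$. Interior elliptic regularity handles compact subsets of $\mathcal{C}$. Near the Neumann face $\Omega \times \{0\}$, I would perform an even reflection in $t$: the reflected function solves an elliptic equation on a slab whose inhomogeneity consists of the bulk term plus a surface distribution $f \otimes \delta_{\{t=0\}}$, and the Calder\'on--Zygmund estimate for the associated single-layer potential (as in the proof of \cite[Proposition 3.1]{CT}) yields $W^{1,q}$ control; since $q > n+1$ is the ambient dimension of $\mathcal{C}$, Morrey's embedding then gives $C^\alpha$. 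Near the lateral face $\partial \Omega \times [0,\infty)$, an odd reflection combined with classical boundary estimates applies. Patching the local estimates produces $v \in C^\alpha([0,R/2]\times\overline\Omega)$.

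For (2) and (3), I would invoke Schauder theory for mixed Dirichlet--Neumann problems. Once $v$ is already known to be H\"older (resp.\ $C^{1,\alpha}$), the right-hand side $-m^2 v$ has the same regularity; on the Neumann face, even reflection together with $f \in C^\alpha$ (resp.\ $f \in C^{1,\alpha}$) produces a solution of an elliptic equation with H\"older (resp.\ $C^{1,\alpha}$) data on a smooth half-ball, so the classical Schauder estimate upgrades $v$ by one derivative. The hypothesis $f|_{\partial\Omega}\equiv 0$ is used precisely to ensure that the even extension of $f$ across $\{t=0\}$ matches the zero Dirichlet data along the flattened lateral boundary, so that the combined reflected data have the claimed H\"older regularity globally rather than only away from the corner.

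The main obstacle, as in the $m=0$ case, is the corner $\partial\Omega \times \{0\}$ where Dirichlet and Neumann meet. The compatibility condition $f|_{\partial\Omega}=0$ is what rules out a corner singularity: after straightening $\partial\Omega$, an even reflection across $t=0$ followed by an odd reflection across the flattened lateral boundary produces a function solving a uniformly elliptic equation on a doubled smooth domain with data whose H\"older norms are controlled. The lower-order term $m^2 v$ plays no role in this corner analysis since it scales subordinately to the leading second-order operator; hence the only work is to verify that the reflection and patching arguments of \cite{CT} go through verbatim with this harmless inhomogeneity on the right-hand side.
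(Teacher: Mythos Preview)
Your approach is correct in outline but genuinely different from the paper's (and from \cite{CT}). You work directly with $v$, treating $m^2 v$ as a lower-order inhomogeneity and attacking the mixed boundary conditions by even reflection at the Neumann face, odd reflection at the lateral face, and a double reflection at the corner. The paper instead follows Cabr\'e--Tan's trick of passing to the primitive
\[
w(x,t) = \int_0^t v(x,s)\,ds,
\]
which satisfies $(-\Delta + m^2)w = f(x)$ in $\mathcal{C}$ with $w = 0$ on \emph{all} of $\partial\mathcal{C}$ (bottom and lateral). One odd reflection across $\{t=0\}$ then yields a pure Dirichlet problem on $\Omega\times\mathbb{R}$, and classical interior/boundary elliptic regularity for $w$ transfers to $v = \partial_t w$. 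The gain is that the corner $\partial\Omega\times\{0\}$ disappears from the analysis entirely, so no separate corner argument or compatibility check is needed. Note in particular that in part~(1) no compatibility condition $f|_{\partial\Omega}=0$ is assumed; your corner paragraph invokes that condition, so as written it does not cover part~(1), whereas the primitive trick handles it automatically. Also, your parenthetical ``as in the proof of \cite[Proposition~3.1]{CT}'' is a misattribution: that proposition is proved precisely via the primitive $w$, not via reflections and single-layer estimates.
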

\begin{proof}
The proof of this lemma follows along exactly the same lines as that of \cite[Proposition 3.1]{CT}. The fundamental idea in \cite{CT} is to make use of the function 
\begin{equation*}
w(x,t) = \int_0^t v(x,s) ds\quad \textrm{for}~ (x,t) \in \mathcal{C}.
\end{equation*}
This function satisfies 
\[\partial_t (-\Delta + m^2) w = (-\Delta +m^2) \partial_t w =(-\Delta +m^2) v =0,\]
which means that $(-\Delta +m^2) w$ is independent of $y$. Note that $(-\Delta + m^2) w (x,0) = - \partial_t v(x,0) = f(x)$ for $x \in \Omega$. Hence, $w$ satisfies
\begin{equation}\label{eq-2-1}
\left\{ \begin{aligned}
(-\Delta +m^2) w (x,t) &= f (x) &\textrm{in}~\mathcal{C},
\\
w&=0&\textrm{on}~\partial \mathcal{C},
\end{aligned}
\right.
\end{equation}
and if we extend $w$ by odd reflection to the whole cylinder $\Omega \times \mathbb{R}$ as
\begin{equation*}
w_{odd} (x,t) = \left\{ \begin{array}{ll} w(x,t) &\textrm{for}~ t \geq 0,
\\
-w(x,-t)&\textrm{for}~t \leq 0,
\end{array}
\right.
\end{equation*}
then the function $w_{odd}$ satisfies the same problem \eqref{eq-2-1} on $\Omega \times (-\infty, \infty)$. At this stage, we may use the classical elliptic regularity of $w_{odd}$, as in \cite{CT}, which implies the regularity properties of the function $v$.   We refer the reader to \cite{CT} for further details.
\end{proof}

From Lemma \ref{lem-2-1}, the following corollary easily follows. 
\begin{cor}\label{regularity-bdd}
For $p > 1$, any $L^\infty$ bounded weak solution of \eqref{eq-main} belongs to $C^{2,\alpha}(\overline{\Omega})$ for some $\alpha \in (0, 1)$. 
\end{cor}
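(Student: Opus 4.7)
The plan is to transfer the problem to the extended cylinder $\mathcal{C}=\Omega\times\R_+$ and then run a bootstrap based on the three parts of Lemma \ref{lem-2-1}, letting the regularity on the $t=0$ trace and the regularity of the nonlinear boundary data $f(x)=mu(x)+|u(x)|^{p-1}u(x)$ feed each other.

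First, I would take $U$ to be the harmonic extension of $u$ defined by \eqref{extension}. Since $u$ is a weak solution of \eqref{eq-main}, $U$ is a weak solution of \eqref{eq-lm}, and in particular $\partial_\nu U(x,0)=f(x)$ on $\Omega\times\{0\}$ in the sense of Lemma \ref{lem-2-1}. The first step is to show $U\in L^\infty(\mathcal{C})$ with $\|U\|_{L^\infty(\mathcal{C})}\le \|u\|_{L^\infty(\Omega)}$: the operator $-\Delta+m^2$ satisfies the maximum principle on $\mathcal{C}$, the lateral boundary values of $U$ vanish, and $U\in H^1(\mathcal{C})$ forces $U\to 0$ as $t\to\infty$ in an integral sense, so a standard truncation/Stampacchia argument bounds $U$ by the supremum of its $t=0$ trace.

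Next I would run the bootstrap. Since $u\in L^\infty(\Omega)$, we have $f=mu+|u|^{p-1}u\in L^\infty(\Omega)\subset L^q(\Omega)$ for every $q$, while $U\in L^\infty(\mathcal{C})\subset L^q([0,R]\times\Omega)$ for every $q$ and every $R>0$. Choosing any $q>n+1$, Lemma \ref{lem-2-1}(1) yields $U\in C^{\alpha_1}([0,R/2]\times\overline{\Omega})$ for some $\alpha_1\in(0,1)$; in particular $u=U(\cdot,0)\in C^{\alpha_1}(\overline{\Omega})$. Because $p>1$, the map $s\mapsto|s|^{p-1}s$ is locally Lipschitz, hence $|u|^{p-1}u\in C^{\alpha_1}(\overline{\Omega})$, and since $u$ vanishes on $\partial\Omega$, the same is true for $f$. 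Lemma \ref{lem-2-1}(2) then upgrades $U$ to $C^{1,\alpha_1}$ on a smaller cylinder, and in particular $u\in C^{1,\alpha_1}(\overline{\Omega})$.

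To apply Lemma \ref{lem-2-1}(3) I need $f\in C^{1,\alpha}(\overline{\Omega})$ with $f|_{\partial\Omega}\equiv 0$. The derivative of $s\mapsto |s|^{p-1}s$ is $p|s|^{p-1}$, which is Hölder continuous of exponent $\min(1,p-1)$; so by the chain rule $|u|^{p-1}u$ is $C^{1,\alpha}$ for any $\alpha\le\min(\alpha_1,p-1)$. Shrinking $\alpha_1$ to such an $\alpha$, I get $f\in C^{1,\alpha}(\overline{\Omega})$ with $f|_{\partial\Omega}=0$, and Lemma \ref{lem-2-1}(3) gives $U\in C^{2,\alpha}([0,R/4]\times\overline{\Omega})$, whence $u=U(\cdot,0)\in C^{2,\alpha}(\overline{\Omega})$.

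The main obstacle I anticipate is the last step: the nonlinearity $s\mapsto|s|^{p-1}s$ is only $C^{1,\min(1,p-1)}$, so when $p$ is close to $1$ one loses Hölder exponent each time the bootstrap passes through the nonlinear boundary term. The existential wording of the corollary (``for some $\alpha\in(0,1)$'') is exactly what lets the argument go through, by simply reducing $\alpha$ to fit the regularity of the nonlinearity rather than trying to preserve the exponent produced by Lemma \ref{lem-2-1}(1). A secondary, more technical point is the initial $L^\infty$ bound on the extension $U$ on the unbounded cylinder $\mathcal{C}$; this requires a justification of decay at infinity that uses the $H^1$-membership of $U$, but no additional ingredient beyond the maximum principle for $-\Delta+m^2$.
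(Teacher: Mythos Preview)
Your proposal is correct and follows exactly the same route as the paper: pass to the extension $U$, use the maximum principle to get $\|U\|_{L^\infty(\mathcal{C})}\le\|u\|_{L^\infty(\Omega)}$, and then bootstrap via the three parts of Lemma~\ref{lem-2-1}. The paper's proof simply says ``Lemma~\ref{lem-2-1} applies'' without spelling out the bootstrap, so your write-up is in fact a more detailed version of the same argument, including the (correct) observation that one must shrink $\alpha$ to accommodate the limited smoothness of $s\mapsto|s|^{p-1}s$ when $p$ is near~$1$.
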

\begin{proof}
Note that for any weak solution $u$ of \eqref{eq-main}, there exists a unique function $U$ such that $U(x,0) = u(x)$ satisfying
\begin{equation*}
\left\{\begin{array}{rll}
-\Delta U + m^2 U &=0&\textrm{in}~\mathcal{C},
\\
U&=0&\textrm{on}~\partial \Omega \times [0,\infty),
\\
\partial_{\nu} U&= |u|^{p-1}u&\textrm{on}~\Omega \times \{0\}.
\end{array}
\right.
\end{equation*}
From the maximum principle, one has $\sup_{\Omega \times [0,\infty)}|U| \leq \|u\|_{L^\infty(\Omega)}$ so Lemma \ref{lem-2-1} applies.
\end{proof}

The next corollary, an another application of Lemma \ref{lem-2-1} says every weak solution to \eqref{eq-main} is $C^{2,\alpha}(\overline\Omega)$ when $p$ is $H^{1/2}$ subcritical or critical.
\begin{cor}\label{regularity}
Let $p \in (1, \frac{n+1}{n-1}]$.
Then, any weak solution of \eqref{eq-lm} belongs to $C^{2,\alpha}(\overline{\mathcal{C}})$ for some $\alpha \in (0,1)$. 
In particular, any weak solution of \eqref{eq-main} belongs to $C^{2,\alpha}(\overline\Omega)$ so that it is a strong solution.
\end{cor}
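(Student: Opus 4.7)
The main task is to reduce to Corollary \ref{regularity-bdd} by establishing the $L^\infty$-boundedness of any weak solution $u$; the $C^{2,\alpha}$ conclusion is then automatic. Given $u \in H^{1/2}_0(\Omega)$, the fractional trace embedding $H^{1/2}_0(\Omega) \hookrightarrow L^{2n/(n-1)}(\Omega)$ yields $u \in L^{2n/(n-1)}(\Omega)$, and hence the effective potential $V := |u|^{p-1}$ belongs to $L^r(\Omega)$ with $r = \frac{2n}{(n-1)(p-1)}$. The hypothesis $p \leq \frac{n+1}{n-1}$ gives exactly $r \geq n$, with equality only at the critical exponent.

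To upgrade $u$ to $L^\infty$, I would carry out a Moser-type iteration on the extension problem \eqref{eq-lm}. For parameters $\beta > 1$ and $L > 0$, test the weak form against the truncated power $\Phi = U\min(|U|^{2(\beta-1)}, L^{2(\beta-1)}) \in H^{1}_{0,L}(\mathcal{C})$. The bulk contribution $\int_{\mathcal{C}} |\nabla U|^2 + m^2 U^2$ bounds from below $\|\nabla U_{\beta}\|_{L^2(\mathcal{C})}^{2}$ for a function $U_\beta$ comparable to $\min(|U|^{\beta}, L^{\beta-1}|U|)$, and the Sobolev trace inequality converts this into control of $\|U_\beta(\cdot,0)\|_{L^{2n/(n-1)}(\Omega)}$. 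The boundary contribution $\int_\Omega (mu + |u|^{p-1}u)\,\Phi(\cdot,0)\,dx$ is estimated by H\"older against $\|V\|_{L^{n}(\Omega)}$, the $mu$ piece being a harmless lower-order term. In the subcritical case $r > n$, the iteration gains a definite factor at every step and propagates $u$ into $L^{q}(\Omega)$ for every $q<\infty$. In the critical case $r = n$, the standard Brezis--Kato device of splitting $V = V_1 + V_2$ with $\|V_1\|_{L^n(\Omega)}$ arbitrarily small (by absolute continuity of the integral) and $V_2 \in L^{\infty}(\Omega)$ lets the small part be absorbed into the left-hand side, and the iteration, driven by the bounded $V_2$, still delivers $u \in L^q(\Omega)$ for every $q < \infty$. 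In either case one finishes by invoking Lemma \ref{lem-2-1}(1) with $q > n+1$, after noting that the extension $U$ inherits corresponding local integrability on $[0,R]\times\Omega$ from its Poisson-type representation via $e^{-t\sqrt{-\Delta+m^2}}u$, to obtain $u \in C^{\alpha}(\overline{\Omega}) \subset L^{\infty}(\Omega)$.

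The main obstacle is of course the critical case: the naive iteration gains nothing at exactly $r = n$, and both the Brezis--Kato splitting and the truncation parameter $L$ are essential, with $L \to \infty$ taken at the end of each stage by monotone convergence. Once $L^\infty$-boundedness is secured, Corollary \ref{regularity-bdd} yields $u \in C^{2,\alpha}(\overline{\Omega})$, so that $f := mu + |u|^{p-1}u \in C^{2,\alpha}(\overline{\Omega})$ vanishes on $\partial\Omega$; successive applications of Lemma \ref{lem-2-1}(2)--(3) then globalize the regularity of the extension to $U \in C^{2,\alpha}(\overline{\mathcal{C}})$ (using the exponential decay of $U$ in $t$ afforded by the $m^{2}$ term). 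The ``in particular'' statement is then immediate, since $C^{2,\alpha}(\overline{\Omega}) \subset H^{1}_{0}(\Omega)$ and $|u|^{p-1}u \in L^{2}(\Omega)$, making the weak solution strong.
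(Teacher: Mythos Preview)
Your proposal is correct and follows essentially the same route as the paper: a Brezis--Kato/Moser iteration on the extended problem (test functions of the form $U\,U_T^{2\beta}$, truncation parameter sent to infinity) to push $u=U(\cdot,0)$ into every $L^q(\Omega)$, followed by repeated application of Lemma~\ref{lem-2-1}. The only cosmetic difference is that the paper obtains $U\in L^q(\mathcal{C})$ directly from the iteration inequality via the slicewise Poincar\'e inequality~\eqref{ineq-poincare}, rather than through the semigroup representation $U(\cdot,t)=e^{-t\sqrt{-\Delta+m^2}}u$ that you invoke; both arguments are valid and equally short.
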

\begin{proof}
In the spirit of Brezis-Kato estimates, we first need to show that $U \in L^q (\mathcal{C})$
and $U(\cdot,0) \in L^q(\Omega)$ for all $q > 1$. (See, for example, Theorem 5.2 in \cite{CT}.) Indeed, if we define $U_T = \min \{|U|, T\}$ for $T >1$ and test $U U_T^{2\beta}$ for \eqref{eq-lm} for $\beta \geq 0$, then we find that
\begin{equation*}
\int_{\mathcal{C}} \nabla U \nabla (U U_T^{2\beta}) dx dt + m^2 \int_{\mathcal{C}} U^2 U_T^{2\beta} dxdt = \int_{\Omega \times\{0\}} |U(x,0)|^{p+1} |U_T (x,0)|^{2\beta} dxdt.
\end{equation*}
This implies that
\begin{equation*}
\int_{\mathcal{C}} \nabla U \nabla (U U_T^{2\beta}) dx dt \leq \int_{\Omega \times\{0\}} |U(x,0)|^{p+1} |U_T (x,0)|^{2\beta} dxdt,
\end{equation*}
Then, all of the arguments of \cite[Theorem 5.2]{CT} can be applied in exactly the same manner, to obtain that
\begin{equation}\label{eq-2-5}
\int_{\mathcal{C}}|\nabla(UU_T^\beta)|^2
\leq C(\beta+1)\int_{\Omega \times\{0\}} |U(x,0)|^{p+1} |U_T (x,0)|^{2\beta} dxdt
\end{equation}
and consequently $U(\cdot,0) \in L^q(\Omega)$ for any $q > 1$.  Then, we may apply the Poincare inequality \eqref{ineq-poincare} to the left hand side of \eqref{eq-2-5} and take $T \rightarrow \infty$, to prove that $U \in L^q (\mathcal{C})$ for any $q>1$.
Finally, we apply Lemma \ref{lem-2-1} repeatedly to show that $U \in C^{2,\alpha}(\overline{\mathcal{C}})$ for some $\alpha \in (0,1)$.
\end{proof}
Note that Corollary \ref{regularity} gives a proof of regularity part of Theorem \ref{thm-main-1}.

\subsection{Embeddings and entire problems}\label{subsec_sobolev_trace} In this subsection, we review the best Sobolev embedding, along with the related entire problems and maximizing functions.
Given any $\lambda > 0$ and $\xi \in \mathbb{R}^n$, let
\begin{equation}\label{bubble}
w_{\lambda, \xi} (x) = \mathfrak{c}_{n} \left( \frac{\lambda}{\lambda^2+|x-\xi|^2}\right)^{\frac{n-1}{2}} \quad \text{for } x \in \mathbb{R}^n,
\end{equation}
where
\begin{equation}\label{cns}
\mathfrak{c}_{n} = 2^{\frac{n-1}{2}} \left( \frac{\Gamma \left( \frac{n+1}{2}\right)}{\Gamma \left( \frac{n-1}{2}\right)}\right)^{\frac{n-1}{2}}.
\end{equation}
We recall the sharp fractional Sobolev inequality for $n>1$, 
\begin{equation}\label{eq-sh-sb}
\left( \int_{\mathbb{R}^n} |f(x)|^{\frac{2n}{n-1}} dx \right)^{\frac{n-1}{2n}} \leq \mathcal{S}_{n} \left( \int_{\mathbb{R}^n} |(-\Delta)^{1/4} f(x)|^2 dx \right)^{1 \over 2} \quad \text{for any } f \in H^{1/2}(\mathbb{R}^n)
\end{equation}
with the best constant
\begin{equation}\label{ans}
\mathcal{S}_{n} =2^{-\frac{1}{2}} \pi^{-1/4} \biggl[\frac{\Gamma \left(\frac{n-1}{2}\right)}{\Gamma \left( \frac{n+1}{2}\right)}\biggr]^{1 \over 2} \biggl[ \frac{\Gamma(n)}{\Gamma(n/2)}\biggr]^{1 \over 2n}.
\end{equation}
The equality holds if and only if $u(x) = c w_{\lambda,\xi}(x)$ for any $c > 0,\ \lambda>0$ and $\xi \in \mathbb{R}^n$ (refer to \cite{CL, FL, L2}).
Furthermore, it was shown in \cite{CLO, L1, L3} that $\{ w_{\lambda, \xi}(x): \lambda>0, \xi \in \mathbb{R}^n \}$ is the set of all solutions for the problem
\begin{equation}\label{entire_nonlocal}
(-\Delta)^{1/2} u = u^{(n+1)/(n-1)},\quad u > 0 \quad \text{in } \mathbb{R}^n\quad \textrm{and}\quad \lim_{|x|\rightarrow \infty} u(x) = 0.
\end{equation}
We use $W_{\lambda,\xi} \in \mathcal{D}^1(\mathbb{R}^{n+1}_+)$ to denote the (unique) harmonic extension of $w_{\lambda,\xi}$, so that $W_{\lambda, \xi}$ solves
\begin{equation}\label{wlyxt}
\left\{\begin{aligned}
-\Delta W_{\lambda,\xi}(x,t) &= 0 &\quad \text{in~} \mathbb{R}^{n+1}_+,\\
W_{\lambda,\xi}(x,0) &= w_{\lambda,\xi}(x) &\quad \text{for } x \in \mathbb{R}^n.
\end{aligned}\right.
\end{equation}
It is easy to check that
\begin{equation}\label{eq-10-14}
W_{\lambda, \xi} (x,t) = \mathfrak{c}_n \left( \frac{\lambda}{|x-\xi|^2+(t+\lambda)^2} \right)^{\frac{n-1}{2}}\quad \textrm{for}~(x,t) \in \mathbb{R}^n \times [0,\infty),
\end{equation}
and for any $U \in D^1(\R^{n+1}_+)$, one has the trace Sobolev inequality
\begin{equation}\label{eq-sharp-trace}
\left( \int_{\mathbb{R}^n} |U(x,0)|^{\frac{2n}{n-1}} dx \right)^{\frac{n-1}{2n}} 
\leq \mathcal{S}_{n} \left( \int_0^{\infty}\!\!\!\int_{\mathbb{R}^n}  |\nabla U(x,t)|^2 dx dt \right)^{1 \over 2},
\end{equation}
where the equality is attained by some function $U \in \mathcal{D}^1(\mathbb{R}^{n+1}_+)$ if and only if $U(x,t) = c W_{\lambda,\xi}(x,t)$ for any $c > 0,\ \lambda>0$ and $\xi \in \mathbb{R}^n$. We remark that
\[
W_{\lambda,\xi}(x,t) = \lambda^{-\frac{n-1}{2}}W_{1,0}\left(\frac{x-\xi}{\lambda},\,\frac{t}{\lambda}\right).
\]
After this section we simply denote $w_{1,0}$ and $W_{1,0}$ by $w_1$ and $W_1$, respectively.

\subsection{Trace inequalities}
Here, we collect some useful trace inequalities which will be invoked frequently in the rest of the paper. 
\begin{prop}\label{lem-ce-1}
There exists a constant $C > 0$ depending only on $n, m$ and $\Omega$ such that the following inequality holds for any $U \in H_{0,L}^{1}(\mathcal{C})$:
\begin{equation*}
\left(\int_{\mathcal{C}}|\nabla U(x,t)|^2 dx dt + m^2 \int_{\mathcal{C}} |U(x,t)|^2 dx dt - m \int_{\Omega} U(x,0)^2 dx\right) \geq C \int_{\mathcal{C}} |\nabla U(x,t)|^2 dx dt.
\end{equation*}
\end{prop}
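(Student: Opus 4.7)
The strategy I will use is to decompose $U$ orthogonally into the $m$-harmonic extension of its trace plus a remainder that vanishes on $\Omega\times\{0\}$, after which the inequality reduces to a clean spectral estimate driven by the first Dirichlet eigenvalue $\lambda_1(\Omega)>0$. Given $U\in H^1_{0,L}(\mathcal{C})$, set $u:=U(\cdot,0)$ and let $V\in H^1_{0,L}(\mathcal{C})$ be the unique solution of the extension problem \eqref{extension} with boundary datum $u$. Writing $U=V+W$, one has $W\in H^1_{0,L}(\mathcal{C})$ with $W(\cdot,0)=0$. Testing $(-\Delta+m^2)V=0$ against $W$ and integrating by parts---the lateral contribution vanishes because $W=0$ on $\partial_L\mathcal{C}$, the slice $\Omega\times\{0\}$ contributes nothing because $W(\cdot,0)=0$, and the $t\to\infty$ term vanishes by decay---yields the orthogonality
\[
\int_{\mathcal{C}}\nabla V\cdot\nabla W+m^2 VW\,dxdt=0,
\]
and hence the energy splits as
\[
\int_{\mathcal{C}}|\nabla U|^2+m^2U^2\,dxdt=\int_{\mathcal{C}}|\nabla V|^2+m^2V^2\,dxdt+\int_{\mathcal{C}}|\nabla W|^2+m^2W^2\,dxdt.
\]

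Expanding $u=\sum_i c_i\phi_i$ in the Dirichlet eigenbasis of $-\Delta$ on $\Omega$, the explicit formula $V(x,t)=\sum_i c_i\phi_i(x)e^{-\sqrt{\lambda_i+m^2}\,t}$ together with Plancherel gives
\[
\int_{\mathcal{C}}|\nabla V|^2+m^2V^2\,dxdt=\sum_i c_i^2\sqrt{\lambda_i+m^2},\qquad\int_\Omega u^2\,dx=\sum_i c_i^2.
\]
Substituting into the left-hand side of the desired inequality yields
\[
\sum_i c_i^2\bigl(\sqrt{\lambda_i+m^2}-m\bigr)+\int_{\mathcal{C}}|\nabla W|^2+m^2W^2\,dxdt.
\]
Because $\lambda_i\geq\lambda_1(\Omega)>0$, the elementary estimate $\sqrt{\lambda_i+m^2}-m\geq\alpha\sqrt{\lambda_i+m^2}$ holds with
\[
\alpha:=1-\frac{m}{\sqrt{\lambda_1(\Omega)+m^2}}=\frac{\lambda_1(\Omega)}{\sqrt{\lambda_1(\Omega)+m^2}\bigl(\sqrt{\lambda_1(\Omega)+m^2}+m\bigr)}\in(0,1),
\]
so the LHS is bounded below by $\alpha$ times the total energy $\int|\nabla V|^2+m^2V^2+\int|\nabla W|^2+m^2W^2$, which equals $\int|\nabla U|^2+m^2U^2$ by the splitting identity above, and in particular by $\alpha\int_{\mathcal{C}}|\nabla U|^2\,dxdt$. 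Since $\alpha$ depends only on $m$ and $\Omega$, this finishes the proof with $C=\alpha$.

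The only point requiring care is the orthogonality step: one must check that $u=U(\cdot,0)$ lies in $H_0^{1/2}(\Omega)$ (by the trace theorem already invoked in Subsection 2.2) so that the $m$-harmonic extension $V$ belongs to $H^1_{0,L}(\mathcal{C})$ and the integration by parts is rigorous. Once this is in place, the proof reduces to the spectral identity together with the scalar inequality above, and I do not anticipate any genuine obstacle.
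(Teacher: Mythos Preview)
Your proof is correct, but it takes a genuinely different route from the paper's. The paper argues entirely by elementary calculus: it writes $m\int_\Omega U(x,0)^2\,dx=-2m\int_{\mathcal{C}}(\partial_tU)U\,dxdt$ via integration by parts in $t$, controls this by Young's inequality as $Am^2\int U^2+\tfrac{1}{A}\int|\partial_tU|^2$, and then absorbs the $\int U^2$ term using the slicewise Poincar\'e inequality $\int_{\mathcal{C}}U^2\leq C(\Omega)\int_{\mathcal{C}}|\nabla_xU|^2$. A suitable choice of $A$ yields the constant $C=\min\{\tfrac12,\,\tfrac{C(\Omega)/2}{m^2+C(\Omega)/2}\}$. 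Your approach instead exploits the variational structure of the extension: the $m$-harmonic extension $V$ of the trace is orthogonal (in the $H^1_m$ inner product) to the zero-trace part $W$, and on the $V$ piece the energy computes exactly via the eigenbasis, reducing the estimate to the scalar inequality $\sqrt{\lambda+m^2}-m\geq\alpha\sqrt{\lambda+m^2}$ for $\lambda\geq\lambda_1$. The advantage of your argument is that it yields the sharp constant $\alpha=1-m/\sqrt{\lambda_1(\Omega)+m^2}$ with a transparent spectral interpretation; the advantage of the paper's is that it is more self-contained, needing neither the extension problem nor the spectral expansion, and as a byproduct it also gives the trace inequality $\int_\Omega mU^2(x,0)\,dx\leq m^2\int_{\mathcal{C}}U^2+\int_{\mathcal{C}}|\partial_tU|^2$ used later in Section~3. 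The only technical care point you flag---that the orthogonality $\int\nabla V\cdot\nabla W+m^2VW=0$ is justified weakly via the variational characterization of $V$ rather than by literal integration by parts at $t\to\infty$---is correctly identified and routine.
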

\begin{proof}
For given $A >0$, we apply integration by parts and Young's inequality to get that
\begin{equation}\label{eq-2-30}
\begin{split}
\int_{\Omega} mU(x,0)^2 dx &=-m\int_0^\infty \left\{\frac{d}{dt}\int_\Omega U(x,t)^2 dx\right\}dt\\
&=- 2m \int_{\Omega}\left( \int_{0}^{\infty} (\partial_t U) (x,t) U(x,t) dt\right) dx
\\
& \leq A m^2 \int_{\mathcal{C}} U(x,t)^2 dx dt + \frac{1}{A} \int_{\mathcal{C}} |\partial_t U(x,t)|^2 dx dt.
\end{split}
\end{equation}
In addition, by applying the Poincare inequality in the level $\{(x,h): x \in \Omega\}$ for each $h \geq 0$, we obtain the estimate
\begin{equation}\label{ineq-poincare}
\int_{\mathcal{C}} |U(x,t)|^2 dx dt \leq C(\Omega) \int_{\mathcal{C}} |\nabla_x U(x,t)|^2 dx dt.
\end{equation}
Now, we apply this estimate to find that
\begin{equation*}
\begin{split}
&\int_{\mathcal{C}} |\nabla U(x,t)|^2 dx dt + m^2 \int_{\mathcal{C}} |U(x,t)|^2 dxdt - m \int_{\Omega} U(x,0)^2 dx
\\
&\quad \quad\quad\geq \frac{1}{2}\int_{\mathcal{C}} |\partial_x U(x,t)|^2 dx dt  + \int_{\mathcal{C}} |\partial_t U(x,t)|^2 dx dt
\\
&\qquad\quad\quad  + \left(m^2 + \frac{C(\Omega)}{2}\right) \int_{\mathcal{C}} |U(x,t)|^2 dx dt - m\int_{\Omega} U(x,0)^2 dx.
\end{split}
\end{equation*}
Then, applying the estimate \eqref{eq-2-30} here with $A= \left(\frac{1}{m^2 + C(\Omega)/2}\right)$, we get that
\begin{equation*}
\begin{split}
&\int_{\mathcal{C}} |\nabla U(x,t)|^2 dx dt + m^2 \int_{\mathcal{C}} |U(x,t)|^2 dxdt - m \int_{\Omega} U(x,0)^2 dx
\\
&\quad \quad\quad\geq \frac{1}{2}\int_{\mathcal{C}} |\partial_x U(x,t)|^2 dx dt  + \frac{C(\Omega)/2}{m^2  + C(\Omega)/2}\int_{\mathcal{C}} |\partial_t U(x,t)|^2 dx dt
\\
&\quad \quad\quad\geq C_1 \int_{\mathcal{C}} |\nabla U(x,t)|^2 dxdt, 
\end{split}
\end{equation*}
where we have set $C_1 = \min \left\{ \frac{1}{2}, \frac{C(\Omega)/2}{m^2 + C(\Omega)/2}\right\}$. 
Thus, the proof is complete.
\end{proof}

As a byproduct of the above proof, we also obtain the following useful trace inequality.
\begin{prop}
For any $U \in H^1_{0,L}(\mathcal{C})$, it holds that
\begin{equation}\label{ineq-trace}
\int_{\Omega} mU^2(x,0)\,dx \leq m^2 \int_{\mathcal{C}} U^2\,dxdt + \int_{\mathcal{C}} |\partial_t U|^2\,dxdt.
\end{equation}
\end{prop}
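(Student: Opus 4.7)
The plan is to observe that the desired inequality is exactly the special case $A=1$ of the computation already carried out in display \eqref{eq-2-30} within the proof of Proposition \ref{lem-ce-1}. So the proof amounts to redoing that one-line identity and invoking Young's inequality without an adjustable parameter.

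Concretely, I would first use the fundamental theorem of calculus in the $t$ variable. For $U \in H^1_{0,L}(\mathcal{C})$, the trace of $U^2$ at $t=0$ can be recovered from its $t$-derivative (after approximating by smooth functions that vanish for large $t$ and on $\partial_L\mathcal{C}$, to legitimize the pointwise computation):
\[
U(x,0)^2 \;=\; -\int_0^\infty \frac{d}{dt}\bigl(U(x,t)^2\bigr)\,dt \;=\; -2\int_0^\infty U(x,t)\,\partial_t U(x,t)\,dt.
\]
Multiplying by $m$ and integrating over $\Omega$, then applying Fubini, gives
\[
\int_\Omega m\,U(x,0)^2\,dx \;=\; -2m\int_{\mathcal{C}} U(x,t)\,\partial_t U(x,t)\,dx\,dt.
\]

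Next, I would apply the unweighted Young inequality $2|ab|\le a^2+b^2$ with $a = m\,U$ and $b = \partial_t U$, so that
\[
-2m\,U\,\partial_t U \;\leq\; 2m\,|U|\,|\partial_t U| \;\leq\; m^2 U^2 + (\partial_t U)^2.
\]
Integrating over $\mathcal{C}$ yields the inequality
\[
\int_{\Omega} m\,U^2(x,0)\,dx \;\leq\; m^2 \int_{\mathcal{C}} U^2\,dx\,dt + \int_{\mathcal{C}} |\partial_t U|^2\,dx\,dt,
\]
which is \eqref{ineq-trace}.

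There is no real obstacle; the only technical point is justifying the boundary-to-interior identity for $U\in H^1_{0,L}(\mathcal{C})$. This is standard: approximate $U$ by functions in $C^\infty_c(\overline{\Omega}\times[0,\infty))$ that vanish on $\partial_L\mathcal{C}$ and for $t$ large, for which all the manipulations above are pointwise legal; then pass to the limit using continuity of the trace map $H^1_{0,L}(\mathcal{C}) \to L^2(\Omega)$ and the $L^2$-convergence of $U$ and $\partial_t U$. Alternatively, one can simply cite display \eqref{eq-2-30} with $A=1$, since the computation carried out there is exactly what is needed.
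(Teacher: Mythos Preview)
Your proposal is correct and matches the paper's approach exactly: the paper presents this inequality as a byproduct of the preceding proof, and indeed setting $A=1$ in \eqref{eq-2-30} is precisely what you do.
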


The following Sobolev-trace embedding is well known (for example, see \cite{CT}).
\begin{prop}\label{prop-Sobolev-embedding}
For any $U \in H^1_{0,L}(\mathcal{C})$, its trace $U(\cdot,0)$ is continuously embedded in $L^q(\Omega)$ for every $q \in (1, \frac{2n}{n-1})$, i.e.,
there exists a constant $C > 0$ depending only on $n, q$ and $\Omega$ such that
\[
\|U(\cdot,0)\|_{L^q(\Omega)} \leq C\|U\|_{H^1_{0,L}(\mathcal{C})}.
\]
Moreover, the embedding is compact for any $q \in (1, \frac{2n}{n-1})$.
\end{prop}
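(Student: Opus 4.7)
The strategy is to reduce both the continuity and the compactness of the trace map $U \mapsto U(\cdot,0)$ to two well-established facts: the sharp trace Sobolev inequality \eqref{eq-sharp-trace} on the half-space, and the spectral Sobolev theory of $H^{1/2}_0(\Omega)$ as defined in Section \ref{prelim}. Continuity will come by brute extension to the half-space, and compactness by factoring the trace through $H^{1/2}_0(\Omega)$ followed by a fractional Rellich-Kondrachov embedding.

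For continuity, given $U \in H^1_{0,L}(\mathcal{C})$, the vanishing of $U$ on the lateral boundary $\partial_L \mathcal{C} = \partial \Omega \times [0,\infty)$ allows us to extend $U$ by zero across $\partial \Omega$ to a function $\widetilde U \in H^1(\mathbb{R}^{n+1}_+)$ with $\|\nabla \widetilde U\|_{L^2(\mathbb{R}^{n+1}_+)} = \|\nabla U\|_{L^2(\mathcal{C})}$. Since $\widetilde U(\cdot,0) = U(\cdot,0)\chi_{\Omega}$, applying \eqref{eq-sharp-trace} gives
\[
\|U(\cdot,0)\|_{L^{2n/(n-1)}(\Omega)} \leq \mathcal{S}_n \|\nabla U\|_{L^2(\mathcal{C})},
\]
and since $\Omega$ is bounded, H\"older's inequality upgrades this to a continuous embedding into $L^q(\Omega)$ for all $q \in (1, \frac{2n}{n-1})$.

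For compactness, I would identify $U(\cdot,0)$ as an element of the spectrally-defined space $H^{1/2}_0(\Omega)$: among all $V \in H^1_{0,L}(\mathcal{C})$ with prescribed trace $u = U(\cdot,0)$, the unique minimizer of $\int_{\mathcal{C}} |\nabla V|^2\,dxdt$ is the harmonic extension, whose minimal energy equals $\sum_i c_i^2 \sqrt{\lambda_i} = \|u\|^2_{H^{1/2}_0(\Omega)}$. Hence the trace map is continuous from $H^1_{0,L}(\mathcal{C})$ into $H^{1/2}_0(\Omega)$, and composing with the compact embedding $H^{1/2}_0(\Omega) \hookrightarrow L^q(\Omega)$ for $q < \frac{2n}{n-1}$ yields the desired compactness. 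The compact embedding on $H^{1/2}_0(\Omega)$ itself may be seen directly from the spectral decomposition: for any bounded sequence in $H^{1/2}_0(\Omega)$, the tails $\sum_{i > N} c_{n,i}^2$ are uniformly small since $\lambda_i \to \infty$, giving $L^2$-precompactness by a diagonal argument, and then interpolation with the continuous critical embedding $H^{1/2}_0 \hookrightarrow L^{2n/(n-1)}$ yields $L^q$-precompactness for every subcritical $q$.

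The main subtlety is justifying the zero-extension step: one must confirm that a function in $H^1_{0,L}(\mathcal{C})$ indeed has a trace on $\partial \Omega \times [0,\infty)$ that vanishes in the trace sense, so that its extension by zero across that lateral face remains in $H^1(\mathbb{R}^{n+1}_+)$. Once this is secured, the rest of the argument is a straightforward combination of \eqref{eq-sharp-trace} with the standard fractional embedding and Rellich-Kondrachov theorems.
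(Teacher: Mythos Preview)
Your argument is correct. Note, however, that the paper does not actually prove this proposition: it simply records it as well known and refers the reader to Cabr\'e--Tan \cite{CT}. So there is no ``paper's own proof'' to compare against in any detailed sense.

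That said, what you wrote is essentially the standard route one finds in \cite{CT}: the zero-extension across the lateral boundary together with the half-space trace inequality \eqref{eq-sharp-trace} gives the critical embedding, H\"older yields the subcritical ones, and compactness is obtained by factoring the trace through $H^{1/2}_0(\Omega)$ and invoking a fractional Rellich--Kondrachov theorem. Your spectral argument for the compact embedding $H^{1/2}_0(\Omega)\hookrightarrow L^q(\Omega)$ (uniform decay of eigen-tails plus interpolation against the critical endpoint) is clean and valid. The only point you flagged as a subtlety---that the zero extension of a function in $H^1_{0,L}(\mathcal{C})$ across $\partial\Omega\times(0,\infty)$ remains in $H^1(\mathbb{R}^{n+1}_+)$---is resolved exactly by the definition of the space $H^1_{0,L}(\mathcal{C})$ together with the classical fact that $H^1$ functions with vanishing trace on a smooth boundary portion extend by zero across it.
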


\section{Nonexistence result (Proof of Theorem \ref{thm-ne})}\label{sec-nonex}

In this section, we prove non-existence of a nontrivial solution to the Dirichlet problem \eqref{eq-main} in the $H^1$-critical and supercritical cases. For this purpose, we shall use the following Pohozaev type identity which is obtained by testing $x\cdot\nabla_x U$ to \eqref{eq-lm},
not $(x,t)\cdot\nabla U$. 
\begin{prop}
Let $U \in H^1_{0, L}(\mathcal{C}) \cap L^{\infty}(\overline{\mathcal{C}})$ be a weak solution of \eqref{eq-lm}. Then one has
\begin{equation}\label{eq-ne-7}
\begin{split}
&\left(\frac{n-2}{2}\right) \int_{\mathcal{C}} |\nabla_x U|^2 dxdt+\frac{n}{2} \int_{\mathcal{C}} |\partial_t U|^2 dxdt
+\frac{nm^2}{2}\int_{\mathcal{C}} U^2 dx dt \\
&\qquad\qquad -\frac{n}{p+1}\int_{\Omega} U^{p+1} dx-\frac{nm}{2} \int_{\Omega} U^2 dx
+\frac{1}{2} \int_{\partial_L \mathcal{C}} (\nu_x \cdot x)\left( \frac{\partial U}{\partial \nu_x}\right)^2 dS = 0.
\end{split}
\end{equation}  
\end{prop}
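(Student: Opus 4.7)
The plan is to test the localized equation \eqref{eq-lm} against the \emph{horizontal} Pohozaev multiplier $x \cdot \nabla_x U$ on a truncated cylinder $\mathcal{C}_R := \Omega \times [0,R]$, integrate by parts in the $x$ and $t$ variables separately, and then pass $R \to \infty$ along a judiciously chosen sequence. Since $U$ is assumed $L^\infty$-bounded, Corollary \ref{regularity-bdd} (extended up to the boundary in the usual way) yields $U \in C^{2,\alpha}(\overline{\mathcal{C}})$, so all integration by parts manipulations are legitimate.

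Split $-\Delta U = -\Delta_x U - \partial_t^2 U$ in the identity $-\Delta U + m^2 U = 0$ and treat the three resulting contributions against $x \cdot \nabla_x U$ separately. For the $-\Delta_x U$ term, the classical horizontal Pohozaev calculation (integration by parts twice in $x$ at each height $t$) produces $-\frac{n-2}{2}\int_{\mathcal{C}_R}|\nabla_x U|^2\,dx\,dt$ plus boundary integrals over $\partial_L \mathcal{C}_R$; because $U=0$ on $\partial_L \mathcal{C}$, one has $\nabla_x U = (\partial U/\partial \nu_x)\nu_x$ there, and these boundary integrals collapse to $-\tfrac{1}{2}\int_{\partial_L \mathcal{C}_R}(x \cdot \nu_x)(\partial U/\partial \nu_x)^2\,dS$. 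For the $-\partial_t^2 U$ term, integration by parts in $t$ followed by integration by parts in $x$ on the cross term yields $-\frac{n}{2}\int_{\mathcal{C}_R}|\partial_t U|^2\,dx\,dt$ (the lateral surface contribution vanishes because $\partial_t U \equiv 0$ on $\partial_L \mathcal{C}$), together with the top and bottom contributions $-\int_\Omega \partial_t U(x,R)(x \cdot \nabla_x U(x,R))\,dx + \int_\Omega \partial_t U(x,0)(x \cdot \nabla_x U(x,0))\,dx$. On the bottom face the outward normal to $\mathcal{C}$ is $-e_{n+1}$, so $\partial_\nu U(\cdot,0) = -\partial_t U(\cdot,0)$, and the boundary condition in \eqref{eq-lm} forces $\partial_t U(\cdot,0) = -mU - |U|^{p-1}U$; integrating by parts one more time in $x$, and using $U=0$ on $\partial \Omega$ to kill the resulting boundary integrals, converts the $t=0$ term into $\frac{nm}{2}\int_\Omega U^2\,dx + \frac{n}{p+1}\int_\Omega |U|^{p+1}\,dx$. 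Finally the $m^2 U (x \cdot \nabla_x U)$ term contributes $-\frac{nm^2}{2}\int_{\mathcal{C}_R} U^2\,dx\,dt$ after integrating by parts in $x$.

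The main subtlety is the vanishing of the top boundary term $\int_\Omega \partial_t U(x,R)(x \cdot \nabla_x U(x,R))\,dx$ as $R \to \infty$. The point is that $U \in H^1_{0,L}(\mathcal{C})$ implies
\begin{equation*}
\int_0^\infty \Bigl( \|\partial_t U(\cdot,R)\|_{L^2(\Omega)}^2 + \|\nabla_x U(\cdot,R)\|_{L^2(\Omega)}^2 \Bigr)\,dR < \infty,
\end{equation*}
so by Fubini one can extract a sequence $R_k \to \infty$ along which both $\|\partial_t U(\cdot,R_k)\|_{L^2(\Omega)}$ and $\|\nabla_x U(\cdot,R_k)\|_{L^2(\Omega)}$ tend to $0$. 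The Cauchy--Schwarz inequality and the boundedness of $\Omega$ then force the top boundary integral to vanish along $\{R_k\}$. Passing to the limit and summing the three contributions yields the equation $(-1)\times$(stated identity) $=0$, which is the desired Pohozaev identity \eqref{eq-ne-7}.
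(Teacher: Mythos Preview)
Your proof is correct and follows essentially the same route as the paper: both multiply the localized equation by the horizontal multiplier $x\cdot\nabla_x U$ and integrate by parts, with the same decomposition into the $-\Delta_x U$, $-\partial_t^2 U$, and $m^2 U$ contributions (the paper organizes this slightly differently, first integrating $-\Delta U$ by parts over $\partial\mathcal{C}$ and then splitting $\nabla U\cdot\nabla(x\cdot\nabla_x U)$ into horizontal and vertical pieces, but the computations coincide). The one place where your argument actually adds something is the explicit truncation to $\mathcal{C}_R$ and the selection of $R_k\to\infty$ along which the top boundary term vanishes; the paper integrates directly over the infinite cylinder $\mathcal{C}$ and is silent about why the integration by parts in $t$ produces no contribution at infinity, so your version is the more careful justification of the same identity.
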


\begin{proof}
By Corollary \ref{regularity-bdd}, we see that $U \in C^2(\mathcal{C})$. Multiplying the localized problem \eqref{eq-lm} by $x \cdot \nabla_x U$ and then integrating out, we get
\begin{equation}\label{eq-ne-1}
m^2 \int_{\mathcal{C}} U \cdot (x \cdot\nabla_x U) dxdt +\int_{\mathcal{C}} (-\Delta U) \cdot (x \cdot \nabla_x U) dx dt = 0	
\end{equation}
Integrating by parts, the first integral becomes
\begin{equation}\label{eq-ne-2}
m^2 \int_{\mathcal{C}} U \cdot (x \cdot\nabla_x U) dxdt
=\frac{m^2}{2}\int_{\mathcal{C}} x \cdot \nabla_x (U^2) dx dt = - \frac{m^2}{2}\int_{\mathcal{C}}n U^2 dx dt,
\end{equation}
while the second integral becomes 
\begin{equation}\label{eq-ne-3}
\begin{split}
\int_{\mathcal{C}} (-\Delta U) (x \cdot \nabla_x U) dxdt 
& =\int_{\partial \mathcal{C}} -\frac{\partial U}{\partial \nu} ( x \cdot \nabla_x U) dx dt 
+ \int_{\mathcal{C}} \nabla U \cdot \nabla ( x \cdot \nabla_x U) dxdt \\
& = - \int_{\Omega} (U^p + mU) (x \cdot \nabla_x U) dx 
-\int_{\partial_L \mathcal{C}} (\nu_x\cdot x) \left| \frac{\partial U}{\partial \nu_x}\right|^2 dS\\
&\quad + \int_{\mathcal{C}} \nabla U \cdot \nabla ( x \cdot \nabla_x U) dxdt\\
&=: I+II+III.
\end{split}
\end{equation}
By integration by parts as in \eqref{eq-ne-2}, one has
\begin{equation}\label{eq-ne-4}
I= -\frac{n}{p+1} \int_{\Omega} U^{p+1} - \frac{nm}{2} \int_{\Omega} U^2 dx.
\end{equation}
Next, we split $III$ as follows.
\begin{equation*}
III = \int_{\mathcal{C}}(\nabla_x U) \cdot \nabla_x ( x \cdot \nabla_x U) dx dt + \int_{\mathcal{C}} \partial_t U \, \partial_t ( x \cdot \nabla_x U) dx dt=:III_1+III_2.
\end{equation*}
By integration by parts again, we write
\begin{equation}\label{eq-ne-6}
\begin{split}
III_1&= \sum_{i,j=1}^n \int_{\mathcal{C}}\partial_i U \partial_i (x_j \partial_j U) dx dt
= \sum_{i,j=1}^n  \int_{\mathcal{C}} \partial_i U ( \delta_{ij} \partial_j U + x_j \partial_{ij} U) dx dt \\
& = \int_{\mathcal{C}} |\nabla_x U|^2 dx dt + \int_{\mathcal{C}} \frac{x}{2} \nabla_x |\nabla_x U|^2 dx dt \\
& = \left(1- \frac{n}{2}\right) \int_{\mathcal{C}} |\nabla_x U|^2 dxdt + \int_{\partial_L \mathcal{C}} \frac{1}{2} (\nu_x \cdot x) \left| \frac{\partial U}{\partial \nu_x}\right|^2 dS.
\end{split}
\end{equation}
On the other hand, since $U=0$ on $\partial_{L}\mathcal{C}$ implies $\partial_tU = 0$ on $\partial_L\mathcal{C}$, we have
\begin{equation}\label{eq-ne-5}
\begin{split}
III_2&= \frac{1}{2} \int_{\mathcal{C}} x\cdot \nabla_x  (\partial_t U)^2 dx dt \\
&= \frac{1}{2} \int_{\partial_L \mathcal{C}} (\nu_x \cdot x) (\partial_t U)^2 dxdt 
-\frac{n}{2} \int_{\mathcal{C}} (\partial_t U)^2 dxdt \\
&= -\frac{n}{2} \int_{\mathcal{C}} (\partial_t U)^2 dxdt.
\end{split}
\end{equation}
Inserting the above identities \eqref{eq-ne-4}, \eqref{eq-ne-6} and \eqref{eq-ne-5} into \eqref{eq-ne-3}, we get
\begin{equation*}
\begin{split}
\int_{\mathcal{C}} (-\Delta U) (x \cdot \nabla_x U) dxdt
& = \frac{n}{p+1} \int_{\Omega} U^{p+1} dx + \frac{mn}{2} \int_{\Omega} U^2 dx
-\int_{\partial_L \mathcal{C}} \frac12(\nu_x \cdot x) \left| \frac{\partial U}{\partial \nu_x}\right|^2 dS \\
&\quad - \frac{n}{2} \int_{\mathcal{C}} |\partial_t U|^2 dx dt + \left(1- \frac{n}{2}\right) \int_{\mathcal{C}} |\nabla_x U|^2 dxdt.
\end{split}
\end{equation*}
Therefore, we conclude that 
\begin{equation*}
\begin{split}
&\left( \frac{n}{2} - 1 \right) \int_{\mathcal{C}} |\nabla_x U|^2 dxdt+\frac{n}{2} \int_{\mathcal{C}} |\partial_t U|^2 dxdt
+\frac{nm^2}{2}\int_{\mathcal{C}} U^2 dx dt \\
&\qquad\qquad -\frac{n}{p+1}\int_{\Omega} U^{p+1} dx-\frac{mn}{2} \int_{\Omega} U^2 dx
+\frac{1}{2} \int_{\partial_L \mathcal{C}} (\nu_x \cdot x)\left| \frac{\partial U}{\partial \nu_x}\right|^2 dS = 0,
\end{split}
\end{equation*}
which is \eqref{eq-ne-7}.
\end{proof}

Theorem \ref{thm-ne} follows from combining the Pohozaev identity \eqref{eq-ne-7}, the Nehari identity and the trace inequality. 
\begin{proof}[Proof of Theorem \ref{thm-ne}]
Let $u$ be a bounded weak solution of \eqref{eq-main} with $p \geq (n+2)/(n-2)$. 
Let $U \in H^1_{0,L}(\mathcal{C})$ be a extension of $u$ given by \eqref{extension}.
Then $U$ satisfies \eqref{eq-lm} and $\|U\|_{L^{\infty}(\mathcal{C})} \leq \|U(x,0)\|_{L^{\infty}(\Omega)} <\infty$ by the maximum principle.
Multiplying \eqref{eq-lm} by $U$ and taking an integration by parts, we obtain
\begin{equation}\label{eq-ne-8}
\int_{\Omega} mU^2 + U^{p+1} dx = m^2 \int_{\mathcal{C}} U^2 dx + \int_{\mathcal{C}} |\nabla U|^2\,dxdt.
\end{equation}
Multiplying \eqref{eq-ne-8} by $-\frac{n}{p+1}$ and summing up with \eqref{eq-ne-7}, we get
\begin{equation*}
\begin{split}
\left(\frac{n}{2}-\frac{n}{p+1}\right)\int_{\Omega} mU^2\,dx & =\left(\frac{n}{2}-\frac{n}{p+1}\right)m^2\int_{\mathcal{C}}U^2\,dxdt 
+\frac{1}{2}\int_{\partial_L \mathcal{C}}(\nu_x \cdot x)\left|\frac{\partial U}{\partial \nu_x}\right|^2 dS_z \\
&\quad + \left( \frac{n}{2} - \frac{n}{p+1}\right) \int_{\mathcal{C}}|\partial_t U|^2 dxdt 
+ \left(\frac{n}{2}-1-\frac{n}{p+1}\right)\int_{\mathcal{C}}|\nabla_x U|^2\,dxdt.
\end{split}
\end{equation*}
Since $p \geq\frac{n+2}{n-2}$, we have
\begin{equation*}
\left( \frac{n}{2} - 1 - \frac{n}{p+1}\right) \int_{\mathcal{C}} |\nabla_x U|^2 dx dt \geq 0.
\end{equation*}
Recall the trace inequality \eqref{ineq-trace}
\begin{equation*}
m^2 \int_{\mathcal{C}} U^2 dx dt + \int_{\mathcal{C}} |\partial_t U|^2 dx dt \geq \int_{\Omega} mU^2(x,0) dx,
\end{equation*}
from which we conclude that
\begin{equation*}
\frac{1}{2} \int_{\partial_L \mathcal{C}} (\nu_x \cdot x) \left| \frac{\partial U}{\partial \nu_x}\right|^2 dS_z  = 0.
\end{equation*}
Then we deduce $U \equiv 0$ from the unique continuation property. This completes the proof.
\end{proof}

\section{Existence in $H^{1/2}$ subcritical case}\label{sec-ex-sub}
In this section, we shall construct a positive least energy solution of \eqref{eq-lm} for all $m > 0$ and $p\in (1, (n+1)/(n-1))$.
We note that once we find a least energy solution of \eqref{eq-lm},
the sign-definiteness of it follows from a standard argument. For example, we refer to Theorem 1.2 in \cite{ChS}. Thus, we may focus ourselves on construction of a least energy solution of \eqref{eq-lm}. 

Let $p \in (1,\, (n+1)/(n-1))$. We recall that
\[
I_{e,m}(U) = \frac{1}{2} \int_{\mathcal{C}} |\nabla U|^2 +m^2 U^2 dx dt - \frac{m}{2} \int_{\Omega}U^2 (x,0) dx - \frac{1}{p+1} \int_{\Omega}|U(x,0)|^{p+1}dx.
\]
As is mentioned in Section \ref{prelim}, we may find a least energy critical point of $I_{e,m}$ to find a least energy critical point of $I_{e,m}$.
We shall search for a minimizer of $I_{e,m}$ on the Nehari manifold
\begin{equation*}
\mathcal{N}_{e,m} := \{V \in H^1_{0,L}(\mathcal{C}) ~|~ J_{e,m}(V) = 0,\, V \not\equiv 0 \},
\end{equation*}
where $J_{e,m}(V) := I'_{e,m}(V)V$, i.e.,
\begin{equation}\label{eq-je}
J_{e,m}(V) =\int_{\mathcal{C}} |\nabla V (x,t)|^2 + m^2  V(x,t)^2 dxdt  -m\int_{\Omega} V(x,0)^2 dx -  \int_{\Omega} |V(x,0)|^{p+1} dx.
\end{equation}

\begin{lem}\label{nonmepty}
$\mathcal{N}_{e,m}$ is nonempty.
\end{lem}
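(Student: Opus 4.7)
The plan is to fix a convenient test function $V_0 \in H^1_{0,L}(\mathcal{C})$ with nontrivial trace and project it onto $\mathcal{N}_{e,m}$ along the ray $t \mapsto tV_0$.

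First, choose any $V_0 \in H^1_{0,L}(\mathcal{C})$ such that $V_0(\cdot,0) \not\equiv 0$; for concreteness one may take $V_0(x,t) = \varphi(x)\eta(t)$ with $\varphi \in C_c^{\infty}(\Omega) \setminus \{0\}$ and $\eta \in C_c^{\infty}([0,\infty))$ satisfying $\eta(0)=1$. In particular $V_0 \not\equiv 0$ on $\mathcal{C}$ and $V_0(\cdot,0) \not\equiv 0$ on $\Omega$.

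Next, evaluate $J_{e,m}$ along the ray $\{tV_0 : t>0\}$. By the homogeneity of the quadratic and $(p+1)$-homogeneous parts in \eqref{eq-je},
\begin{equation*}
J_{e,m}(tV_0) = A\,t^2 - B\,t^{p+1},
\end{equation*}
where
\begin{equation*}
A := \int_{\mathcal{C}} |\nabla V_0|^2 + m^2 V_0^2 \,dxdt - m\int_{\Omega} V_0(x,0)^2 \,dx,
\quad
B := \int_{\Omega} |V_0(x,0)|^{p+1} \,dx.
\end{equation*}
Proposition \ref{lem-ce-1} gives $A \geq C \int_{\mathcal{C}} |\nabla V_0|^2 \,dxdt$, and the right-hand side is strictly positive because $V_0$ is a nonzero element of $H^1_{0,L}(\mathcal{C})$ (which vanishes on $\partial_L \mathcal{C}$, so Poincaré's inequality on $\mathcal{C}$ forces $\int_{\mathcal{C}}|\nabla V_0|^2 > 0$). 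Clearly $B > 0$ as well, since $V_0(\cdot,0) \not\equiv 0$.

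Finally, the function $t \mapsto A\,t^2 - B\,t^{p+1}$ vanishes at
\begin{equation*}
t_\ast := \Bigl( A / B \Bigr)^{\!1/(p-1)} > 0,
\end{equation*}
which is well defined because $p>1$. Thus $V_\ast := t_\ast V_0 \in H^1_{0,L}(\mathcal{C})$ satisfies $V_\ast \not\equiv 0$ and $J_{e,m}(V_\ast)=0$, showing $V_\ast \in \mathcal{N}_{e,m}$. There is no serious obstacle here; the only real input is the coercivity estimate of Proposition \ref{lem-ce-1}, which guarantees that the quadratic part of $J_{e,m}$ is positive definite for every $m>0$ despite the sign-indefinite contribution $-m\int_\Omega V^2(x,0)\,dx$.
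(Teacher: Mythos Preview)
Your proof is correct and follows essentially the same ray-projection idea as the paper: fix a test function and solve for the scaling factor $t_\ast$ that places $t_\ast V_0$ on the Nehari manifold. The only cosmetic difference is that the paper phrases this as locating the unique maximum of $t\mapsto I_{e,m}(tV)$ (whose derivative is $t^{-1}J_{e,m}(tV)$) rather than solving $J_{e,m}(tV_0)=0$ directly; you are in fact slightly more careful in explicitly requiring a nontrivial trace and invoking Proposition~\ref{lem-ce-1} to guarantee $A>0$.
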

\begin{proof}
Choose any nonzero $V \in H^1_{0,L}(\mathcal{C})$. Then as a function of $t$, one can see
\[
I_{e,m}(tV) = \frac{t^2}{2}\left(\int_{\mathcal{C}}|\nabla V|^2+m^2 V^2\,dxdy
-m\int_{\Omega}V(x,0)^2\,dx\right)
-\frac{t^{p+1}}{p+1}\int_{\Omega}|V(x,0)|^{p+1}\,dx
\]
attains a unique local maximum at some $t_0 \in (0,\,\infty)$.
Differentiating with respect to $t$, we get $I_{e,m}'(t_0 V)V = 0$, which says $t_0 V \in \mathcal{N}_{e,m}$.
\end{proof}

\begin{lem}\label{minimizer}
There exists a minimizer of $I_{e,m}$ subject to $\mathcal{N}_{e,m}$, i.e.,
there exists a function $U \in \mathcal{N}_{e,m}$ such that $I_{e,m}(U) = M_{e,m}$.

\end{lem}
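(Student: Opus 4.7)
The plan is to realize $M_{e,m}$ by the direct method on the Nehari manifold, exploiting compactness of the Sobolev-trace embedding in the strictly $H^{1/2}$-subcritical range. Introduce
\[
A(V) := \int_{\mathcal{C}} |\nabla V|^2 + m^2 V^2 \, dx dt - m \int_\Omega V(x,0)^2 \, dx, \qquad B(V) := \int_\Omega |V(x,0)|^{p+1} \, dx,
\]
so that $I_{e,m} = \tfrac12 A - \tfrac{1}{p+1} B$ and $J_{e,m} = A - B$. Proposition \ref{lem-ce-1} shows that $A$ is coercive in the sense that $A(V) \geq C \|V\|^2$ for every $V \in H^1_{0,L}(\mathcal{C})$, and the Nehari identity $A = B$ reduces the functional on $\mathcal{N}_{e,m}$ to $I_{e,m}|_{\mathcal{N}_{e,m}} = \tfrac{p-1}{2(p+1)} A = \tfrac{p-1}{2(p+1)} B$.

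Let $\{U_n\} \subset \mathcal{N}_{e,m}$ be a minimizing sequence. The bound $I_{e,m}(U_n) = \tfrac{p-1}{2(p+1)} A(U_n) \to M_{e,m}$ combined with coercivity of $A$ gives an $H^1_{0,L}(\mathcal{C})$ bound, so up to a subsequence $U_n \rightharpoonup U$ weakly in $H^1_{0,L}(\mathcal{C})$. Since $p+1 < \tfrac{2n}{n-1}$, Proposition \ref{prop-Sobolev-embedding} upgrades this to strong trace convergence $U_n(\cdot,0) \to U(\cdot, 0)$ in $L^2(\Omega) \cap L^{p+1}(\Omega)$.

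To see $U \not\equiv 0$, combine the Nehari identity $A(U_n) = B(U_n)$ with Proposition \ref{lem-ce-1} and the continuous trace embedding into $L^{p+1}(\Omega)$ to obtain $C \|U_n\|^2 \leq A(U_n) = B(U_n) \leq C' \|U_n\|^{p+1}$. Since $p > 1$, this forces $\|U_n\| \geq c_0 > 0$, and consequently $B(U_n) \geq c_1 > 0$; strong $L^{p+1}$ trace convergence then produces $B(U) \geq c_1 > 0$. Weak lower semicontinuity of the $\int |\nabla \cdot|^2 + m^2(\cdot)^2$ piece and strong convergence of the $L^2$ and $L^{p+1}$ trace pieces give $J_{e,m}(U) \leq \liminf_n J_{e,m}(U_n) = 0$. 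If the inequality were strict, the Nehari projection from Lemma \ref{nonmepty} would supply $t_U \in (0,1)$ with $t_U U \in \mathcal{N}_{e,m}$ and
\[
M_{e,m} \leq I_{e,m}(t_U U) = \tfrac{p-1}{2(p+1)} t_U^{p+1} B(U) < \tfrac{p-1}{2(p+1)} B(U) = \lim_n I_{e,m}(U_n) = M_{e,m},
\]
contradicting the definition of $M_{e,m}$. Hence $J_{e,m}(U) = 0$, so $U \in \mathcal{N}_{e,m}$ and $I_{e,m}(U) = M_{e,m}$.

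The main obstacle I anticipate is ensuring nontriviality of the weak limit: the entire subcritical, positive-mass structure is used at once, with Proposition \ref{lem-ce-1} supplying the uniform norm lower bound on $\mathcal{N}_{e,m}$ and strict subcriticality $p+1 < \tfrac{2n}{n-1}$ being needed to upgrade weak convergence to strong $L^{p+1}(\Omega)$ trace convergence; without either ingredient, $U_n$ could escape to zero or to a concentrating profile and $U$ would vanish.
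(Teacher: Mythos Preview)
Your proof is correct and follows essentially the same approach as the paper: direct minimization on the Nehari manifold, with boundedness from Proposition~\ref{lem-ce-1}, compactness of the subcritical trace embedding, nontriviality of the weak limit from the uniform lower bound $B(U_n)\geq c_1>0$, and a Nehari projection to finish. Your final step is slightly sharper than the paper's, since you actually show $J_{e,m}(U)=0$ (so $U$ itself, not merely a rescaling $t_0U$, is the minimizer), but otherwise the arguments coincide.
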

\begin{proof}

Take a minimizing sequence $\{V_j\}$ of $I_{e,m}$ subject to $\mathcal{N}_{e,m}$.
Then one has
\[
\left(\frac{1}{2}-\frac{1}{p+1}\right)
\left(\int_{\mathcal{C}}|\nabla V_j|^2+m^2 V_j^2\,dxdt-m\int_{\Omega}V_j(x,0)^2\,dx\right)
 = I_{e,m}(V_j) < C,
\]
from which and Proposition \ref{lem-ce-1} we deduce  $\|V_j\|$ is bounded for $j$.
Then there exists some $V_0 \in H^1_{0,L}(\mathcal{C})$
such that, after extracting a subsequence, $V_j \rightharpoonup V_0$ weakly in $H^1_{0,L}(\mathcal{C})$
and $V_j(x,0) \to V_0(x,0)$ strongly in $L^{p+1}(\Omega)$ by Proposition \ref{prop-Sobolev-embedding}.
The weakly lower semi-continuity of the functional $I_{e,m}$ and $J_{e,m}$, again came from Proposition \ref{prop-Sobolev-embedding}, says
$I_{e,m}(V_0) \leq M_{e,m}$ and $J_{e,m}(V_0) \leq 0$.
To show that $V_0\neq 0$ by contradiction, suppose that $V_0 \equiv 0$. 
Then one has $V_j(x,0) \to 0$ in $L^{p+1}(\Omega)$
but since $J_{e,m}(V_j) = 0$,  we have
\begin{equation*}
\begin{split}
\int_{\Omega} |V_j (x,0)|^{p+1}\,dx&
=\int_{\mathcal{C}} |\nabla V_j(x,t)|^2 + m^2V_j(x,t)^2\,dxdt -m\int_{\Omega} V_j(x,0)^2\,dx
\\
& \geq  C\int_{\mathcal{C}} |\nabla V_j(x,t)|^2\,dxdt
\\
& \geq C \| V_j (\cdot, 0)\|_{L^{p+1}(\Omega)}^2,
\end{split}
\end{equation*}
from the trace inequality \eqref{ineq-trace} and trace Sobolev inequality.
This shows that $\|V_j(\cdot,0)\|_{L^p(\mathbb{R}^n)}$ is bounded below from $C^{\frac{1}{p-1}}$
and consequently $V_0 \not\equiv 0$.
Then it is easy to see there is $t_0 \in (0, 1]$ such that
$I_{e,m}(t_0V_0) \leq M_{e,m}$ and $J_{e,m}(t_0V_0) = 0$.
This completes the proof of Lemma \ref{minimizer}.
\end{proof}

\begin{proof}[Proof of Theorem \ref{thm-main-1} for the subcritical case]
Let $U$ be the minimizer obtained in Lemma \ref{minimizer}.
Since $U \in \mathcal{N}_{e,m}$,
\[
\begin{aligned}
J_{e,m}'(U)U &=  2\left(\int_{\mathcal{C}}|\nabla U|^2+m^2 U^2\,dxdt
-m\int_{\Omega}U (x,0)^2\,dx\right) -(p+1) \int_{\Omega} |U(x,0)|^{p+1}\,dx \\
&= (1-p)\int_{\Omega} |U(x,0)|^{p+1}\,dx \neq 0
\end{aligned}
\]
so $J_{e,m}'(U) \neq 0$. Then the Lagrange multiplier rule applies to see that for some $\lambda \in \R$,
\begin{equation}\label{LM}
I_{e,m}'(U) = \lambda J_{e,m}'(U).
\end{equation}
By testing $U$ to \eqref{LM}, we see $\lambda = 0$ so that $U$ is a nontrivial solution of \eqref{eq-lm}.
We finally see that $U$ is a least energy solution of \eqref{eq-lm} since, for every nontrivial solution $V$ of \eqref{eq-lm}, 
one must have $V \in \mathcal{N}_{e,m}$ by testing $V$ to the equation \eqref{eq-lm}.
\end{proof}

\section{Existence in $H^{1/2}$ critical case}\label{sec-ex-crit}
In this section, we prove Theorem \ref{thm-main-1} in the critical case $p =\frac{n+1}{n-1}$ by finding a critical point of the functional
\begin{equation*}
I_{e,m} (U) = \frac{1}{2} \left[ \int_{\mathcal{C}} |\nabla U(x,t)|^2 + m^2 U(x,t)^2\,dxdt 
-m\int_{\Omega}U(x,0)^2\, dx \right] - \frac{n-1}{2n} \int_{\Omega}|U(x,0)|^{\frac{2n}{n-1}}\,dx.
\end{equation*}
Due to the loss of compactness of the embedding $H^1_{0,L}(\mathcal{C})\big|_{\Omega} \hookrightarrow L^{2n/(n-1)}(\Omega)$, the first step we have to do would be characterizing the levels of $I_{e,m}$ at which the Palais-Smale condition holds.  
\begin{lem}\label{lem-ce-3} 
The Palais-Smale condition holds for $I_{e,m}$ at any level $B < \frac{1}{2n}\mathcal{S}_n^{-2n}$.
\end{lem}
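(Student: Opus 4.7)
The plan is to follow the Brezis-Nirenberg strategy for critical semilinear problems, adapted to the extension formulation. Let $\{U_j\} \subset H^1_{0,L}(\mathcal{C})$ be a Palais-Smale sequence at level $B < \tfrac{1}{2n}\mathcal{S}_n^{-2n}$. First I would prove boundedness by computing
\[
I_{e,m}(U_j) - \tfrac{1}{2}I_{e,m}'(U_j) U_j = \tfrac{1}{2n}\int_{\Omega} |U_j(x,0)|^{\frac{2n}{n-1}}\, dx,
\]
which controls the critical term by $2nB + o(\|U_j\|) + o(1)$. Substituting back into $I_{e,m}'(U_j) U_j = o(\|U_j\|)$ and invoking the coercivity estimate of Proposition \ref{lem-ce-1} on the quadratic form then forces $\|U_j\|_{H^1_{0,L}(\mathcal{C})}$ to be bounded. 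Passing to a subsequence I obtain $U_j \rightharpoonup U$ weakly in $H^1_{0,L}(\mathcal{C})$, with $U_j(\cdot,0) \to U(\cdot,0)$ strongly in $L^q(\Omega)$ for every $q \in (1,\tfrac{2n}{n-1})$ by the compactness in Proposition \ref{prop-Sobolev-embedding}, and $U_j \to U$ strongly in $L^2(\mathcal{C})$ via the Poincaré inequality \eqref{ineq-poincare}. Standard density arguments show $U$ is a weak solution of \eqref{eq-lm}, so $J_{e,m}(U) = 0$ and $I_{e,m}(U) = \tfrac{1}{2n}\int_{\Omega}|U(x,0)|^{\frac{2n}{n-1}}\, dx \geq 0$.

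Next, setting $V_j := U_j - U$, weak convergence eliminates cross terms in $\int_{\mathcal{C}}|\nabla U_j|^2$ and $m^2\int_{\mathcal{C}} U_j^2$, the strong $L^2(\Omega)$ trace convergence gives $\int_{\Omega} V_j(x,0)^2\, dx \to 0$, and Brezis-Lieb provides
\[
\int_{\Omega} |U_j(x,0)|^{\frac{2n}{n-1}}\, dx = \int_{\Omega} |U(x,0)|^{\frac{2n}{n-1}}\, dx + \int_{\Omega} |V_j(x,0)|^{\frac{2n}{n-1}}\, dx + o(1).
\]
Combining these splittings with $J_{e,m}(U_j) = o(1)$ and $J_{e,m}(U) = 0$ yields
\[
\int_{\mathcal{C}} |\nabla V_j|^2 + m^2 V_j^2\, dxdt - \int_{\Omega}|V_j(x,0)|^{\frac{2n}{n-1}}\, dx = o(1),
\]
while splitting $I_{e,m}(U_j)$ gives $B - I_{e,m}(U) = \tfrac{1}{2n}\int_{\Omega}|V_j(x,0)|^{\frac{2n}{n-1}}\, dx + o(1)$.

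For the final step, set $L := \lim \int_{\mathcal{C}}|\nabla V_j|^2\, dxdt$ and $M := \lim \int_{\Omega}|V_j(x,0)|^{\frac{2n}{n-1}}\, dx$. Dropping the nonnegative mass term in the first identity gives $L \leq M$. Extending $V_j$ by zero outside $\Omega$ produces a function in $\mathcal{D}^1(\mathbb{R}^{n+1}_+)$ with the same gradient energy, so the sharp trace inequality \eqref{eq-sharp-trace} yields $M^{(n-1)/n} \leq \mathcal{S}_n^2\, L \leq \mathcal{S}_n^2\, M$; if $L > 0$ this forces $M \geq \mathcal{S}_n^{-2n}$. But then
\[
B = I_{e,m}(U) + \tfrac{M}{2n} \geq \tfrac{1}{2n}\mathcal{S}_n^{-2n},
\]
contradicting the hypothesis on $B$. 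Therefore $L = 0$, the Poincaré inequality \eqref{ineq-poincare} upgrades this to $V_j \to 0$ in $H^1_{0,L}(\mathcal{C})$, and strong convergence $U_j \to U$ follows.

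The main obstacle is the crucial cancellation in the energy splitting: one must verify that the $-m\int_{\Omega}U_j(x,0)^2\, dx$ term contributes nothing to the residual energy of $V_j$. This is precisely where the subcritical exponent $2$ in the lower-order term, together with the compactness of the trace embedding into $L^2(\Omega)$ from Proposition \ref{prop-Sobolev-embedding}, is essential — it is what allows the threshold $\tfrac{1}{2n}\mathcal{S}_n^{-2n}$, rather than something shifted by $m$, to control the loss of compactness. The positive mass contribution $m^2\int_{\mathcal{C}}V_j^2$ on the left of the reduced identity is what furnishes the strict inequality $L \leq M$ and thereby makes the pseudo-relativistic setting no worse than the purely fractional one at criticality.
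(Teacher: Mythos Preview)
Your proof is essentially the same Brezis--Nirenberg argument as the paper's: bound the sequence via $I_{e,m}-\tfrac12 I_{e,m}'$ and Proposition~\ref{lem-ce-1}, split the energy of $V_j=U_j-U$ using weak convergence and Brezis--Lieb, then apply the sharp trace inequality \eqref{eq-sharp-trace} to force $M\ge \mathcal{S}_n^{-2n}$ and derive a contradiction with $I_{e,m}(U)\ge 0$. The paper packages $L$ and the mass term together as a single quantity $J=\lim\int_{\mathcal{C}}|\nabla V_k|^2+m^2V_k^2=\lim\int_\Omega|V_k|^{2n/(n-1)}$, but your separation into $L\le M$ is equivalent.

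One small inaccuracy: you claim $U_j\to U$ \emph{strongly} in $L^2(\mathcal{C})$ ``via the Poincar\'e inequality \eqref{ineq-poincare}'', but \eqref{ineq-poincare} is only a norm bound, not a compactness statement, and the cylinder $\mathcal{C}=\Omega\times(0,\infty)$ is unbounded in $t$, so the embedding $H^1_{0,L}(\mathcal{C})\hookrightarrow L^2(\mathcal{C})$ is \emph{not} compact. Fortunately you do not actually use strong $L^2(\mathcal{C})$ convergence anywhere: in your splitting step you (correctly) invoke only weak convergence to kill the cross terms in $\int_{\mathcal{C}}|\nabla U_j|^2$ and $m^2\int_{\mathcal{C}}U_j^2$, and at the end the Poincar\'e inequality legitimately upgrades $L=0$ to $\|V_j\|_{L^2(\mathcal{C})}\to 0$. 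So the argument stands once that sentence is corrected to ``$U_j\rightharpoonup U$ weakly in $L^2(\mathcal{C})$''.
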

\begin{proof}
To verify the lemma, assume that $\{U_k\}_{k=1}^\infty$ is a sequence in $H_{0,L}^{1}(\mathcal{C})$ such that
\begin{equation}\label{eq-5-2}
\lim_{k\to\infty}I_{e,m}'(U_k) = 0 \quad \textrm{and}\quad \lim_{k \rightarrow \infty} I_{e,m} (U_k) = B
\end{equation}
for some $B < \frac{1}{2n}\mathcal{S}_n^{-2n}$.
We need to show that $\{U_k\}_{k=1}^\infty$ converges in $H_{0,L}^{1} (\mathcal{C})$ up to a subsequence. 
We observe from \eqref{eq-5-2} that
\begin{equation*}
\sup_{k\in \mathbb{N}} \int_{\Omega}|U_k(x,0)|^{\frac{2n}{n-1}}\, dx < \infty\quad \textrm{and}\quad 
\sup_{k \in \mathbb{N}} \left(\int_{\mathcal{C}} |\nabla U_k|^2 + m^2 U_k^2\,dxdt-m\int_{\Omega}U_k(x,0)^2\,dx\right) < \infty.
\end{equation*}
Hence $\{U_k\}_{k=1}^\infty$ has a weakly convergent subsequence in $H_{0,L}^1 (\mathcal{C})$, and its limit $U \in H_{0,L}^1 (\mathcal{C})$ satisfies
\begin{equation}\label{eq-5-3}
\left\{\begin{array}{rll} (-\Delta +m^2) U &=0\quad &\textrm{in}~\mathcal{C},
\\
\partial_{\nu} U &= m U + U^{\frac{n+1}{n-1}}~&\textrm{on}~\Omega \times \{0\}.
\end{array}
\right.
\end{equation}
By the embedding properties, we know that  $U_k(\cdot,0) \rightarrow U(\cdot,0)$ in $L^2 (\Omega)$, $U_k \rightharpoonup U$ in $L^{\frac{2n}{n-1}}(\Omega)$ and $U_k \rightharpoonup U$ in $L^2 (\mathcal{C})$. Hence we have
\begin{equation}\label{eq-5-4}
\lim_{k \rightarrow \infty} \int_{\mathcal{C}} |\nabla (U_k - U)|^2+|U_k - U|^2 \,dxdt = \lim_{k \rightarrow \infty} \int_{\mathcal{C}} |U_k|^2 + |\nabla U_k|^2 dx dt - \int_{\mathcal{C}} U^2 + |\nabla U|^2 dx,
\end{equation}
and the Brezis-Lieb lemma implies that
\begin{equation}\label{eq-5-5}
\lim_{k \rightarrow \infty} \int_{\Omega}|U_k - U|^{\frac{2n}{n-1}} dx = \lim_{k \rightarrow \infty} \int_{\Omega} |U_k|^{\frac{2n}{n-1}} dx - \int_{\Omega} |U|^{\frac{2n}{n-1}} dx.
\end{equation}
From the fact that $\lim_{k \rightarrow \infty} I_{e,m}'(U_k)U_k =0$, we have
\begin{equation}\label{eq-5-6}
0 = \lim_{k \rightarrow \infty}\left( \int_{\mathcal{C}} |\nabla U_k|^2 + m^2|U_k|^2\,dxdt  -\int_{\Omega} m|U_k|^2\,dx -\int_{\Omega}|U_k|^{\frac{2n}{n-1}}\,dx\right)
\end{equation} 
and from \eqref{eq-5-3},
\begin{equation}\label{eq-5-7}
0 = \int_{\mathcal{C}} |\nabla U|^2 + m^2|U|^2\,dxdt -\int_{\Omega} m|U|^2\,dx -\int_{\Omega}|U|^{\frac{2n}{n-1}}\,dx.
\end{equation}
Subtracting \eqref{eq-5-6} from \eqref{eq-5-7}, we see from \eqref{eq-5-4}, \eqref{eq-5-5} and $L^2$ convergence $U_k (\cdot,0) \rightarrow U(\cdot,0)$ in $\Omega$ that
\begin{equation*}
0 =  \lim_{k \rightarrow \infty} \left(\int_{\mathcal{C}} |\nabla(U_k -U)|^2 + m^2|U_k -U|^2 \,dxdt- \int_{\Omega} |U_k -U|^{\frac{2n}{n-1}}\,dx\right).
\end{equation*} 
After extracting a subsequence, we define a nonnegative value $J\geq 0$ by
\begin{equation}\label{eq-5-8}
J: = \lim_{k\rightarrow \infty}  \int_{\Omega} |U_k -U|^{\frac{2n}{n-1}} dx =\lim_{k \rightarrow \infty} \int_{\mathcal{C}} |\nabla(U_k -U)|^2 +m^2|U_k -U|^2\,dxdt.
\end{equation}
If $J =0$, we are done. Suppose that $J>0$. Then, using the trace Sobolev inequality \eqref{eq-sharp-trace}, we get
\begin{equation*}
\begin{split}
J =  \lim_{k \rightarrow \infty} \int_{\Omega} |U_k -U|^{\frac{2n}{n-1}}\,dx &\leq \mathcal{S}_n^{\frac{2n}{n-1}}\lim_{k\to\infty}\left( \int_{\mathcal{C}} |\nabla (U_k - U)|^2\,dxdt \right)^{\frac{n}{n-1}}
\\
&\leq \mathcal{S}_n^{\frac{2n}{n-1}}\lim_{k \rightarrow \infty} \left( \int_{\mathcal{C}}|\nabla (U_k - U)|^2 + m^2 |U_k -U|^2\,dxdt \right)^{\frac{n}{n-1}}
\\
&\leq \mathcal{S}_n^{\frac{2n}{n-1}}J^{\frac{n}{n-1}},
\end{split}
\end{equation*}
which shows $\mathcal{S}_n^{-2n} \leq J$. Combining this with \eqref{eq-5-8}, we get
\begin{equation}\label{eq-5-9}
\begin{split}
&\lim_{k \rightarrow \infty} \left[ \frac{1}{2} \int_{\mathcal{C}} |\nabla (U_k -U)|^2 +|U_k - U|^2\,dxdt - \frac{n-1}{2n}\int_{\Omega} |U_k -U|^{\frac{2n}{n-1}} dx \right]
\\
& = \lim_{k \rightarrow \infty} \frac{1}{2n} \int_{\Omega} |U_k- U|^{\frac{2n}{n-1}} dx  \geq \frac{1}{2n} \mathcal{S}_n^{-2n}.
\end{split}
\end{equation}
On the other hand, we may use again \eqref{eq-5-4}, \eqref{eq-5-5} and the fact that $U_k (\cdot,0) \rightarrow U(\cdot,0)$ in $L^2 (\Omega)$, to deduce that
\begin{equation}\label{eq-5-10}
\lim_{k \rightarrow \infty} I_{e,m} (U_k) = I_{e,m} (U) + \lim_{k \rightarrow \infty} \left[ \frac{1}{2} \int_{\Omega} |(U_k - U)|^2 + |\nabla (U_k -U)|^2\,dxdt - \frac{n-1}{2n}\int_{\Omega} |U_k -U|^{\frac{2n}{n-1}} dx \right]
\end{equation}
Noting that $I_{e,m} (U) \geq 0$ and inserting \eqref{eq-5-9} into \eqref{eq-5-10}, we obtain $B \geq \frac{1}{2n}\mathcal{S}_n^{-2n}$, 
which is a contradiction to the assumption that $B < \frac{1}{2n}\mathcal{S}_n^{-2n}$. Hence $J=0$ holds and thus the lemma is proved.
\end{proof}

We define the mountain pass level $L_{e,m}$ of $I_{e,m}$ by
\[
L_{e,m} := \inf_{\gamma \in \Gamma}\max_{t\in[0,1]}I_{e,m}(\gamma(t)),
\]
where
\[
\Gamma := \{\gamma \in C([0,1],\,H^1_{0,L}(\mathcal{C})) ~|~ \gamma(0) = 0,\, I_{e,m}(\gamma(1)) < 0\}.
\]

\begin{lem}\label{lem-ce-2} 
It holds that
\[
L_{e,m} < \frac{1}{2n}\mathcal{S}_n^{-2n}
\]
In particular, the Palais-Smale condition holds for $I_{e,m}$ at the mountain pass level $L_{e,m}$. 
\end{lem}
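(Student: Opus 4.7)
The plan is to estimate $L_{e,m}$ from above by testing $I_{e,m}$ along a ray through a cutoff of the extremal bubble $W_{\lambda,\xi_0}$ for the sharp trace Sobolev inequality \eqref{eq-sharp-trace}, and to show that the lower-order term $-m\int u^2$ strictly breaks the equality at scale $\lambda \to 0$. Fix $\xi_0 \in \Omega$ and $r > 0$ with $B_n(\xi_0, 2r) \subset \Omega$, pick $\eta \in C_c^\infty(\Omega)$ with $\eta \equiv 1$ on $B_n(\xi_0, r)$ and $0 \leq \eta \leq 1$, and for $\lambda > 0$ set
\[
V_\lambda(x,t) := \eta(x)\, W_{\lambda,\xi_0}(x,t).
\]
Since $\eta$ vanishes on $\partial\Omega$ and (because $n \geq 3$) $W_{\lambda,\xi_0}$ has the required integrability in $t$, we have $V_\lambda \in H^1_{0,L}(\mathcal{C})$. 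The mountain-pass geometry for $I_{e,m}$ is standard: Proposition \ref{lem-ce-1} combined with the trace Sobolev embedding produces a strictly positive well near the origin, and $I_{e,m}(tV_\lambda) \to -\infty$ as $t \to \infty$.

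Standard Brezis--Nirenberg-type cutoff estimates yield, as $\lambda \to 0^+$,
\[
\int_{\mathcal{C}} |\nabla V_\lambda|^2\,dxdt = \int_{\mathbb{R}^{n+1}_+}|\nabla W_1|^2 + O(\lambda^{n-1}), \qquad \int_{\Omega} |V_\lambda(x,0)|^{2n/(n-1)} dx = \int_{\mathbb{R}^n} w_1^{2n/(n-1)} + O(\lambda^n),
\]
while a direct rescaling (using $\eta \equiv 1$ near $\xi_0$) gives
\[
\int_{\Omega} V_\lambda(x,0)^2\,dx = \mathfrak{c}_n^2\, K_n\, \lambda + o(\lambda), \qquad K_n := \int_{\mathbb{R}^n}(1+|y|^2)^{-(n-1)} dy < \infty,
\]
where the integral $K_n$ is finite precisely because $n \geq 3$, whereas $\int_{\mathcal{C}} V_\lambda^2\,dxdt$ is $O(\lambda^2)$ for $n \geq 4$ and $O(\lambda^2 |\log\lambda|)$ for $n = 3$. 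Consequently the gain $-m\int V_\lambda(x,0)^2 = -m\mathfrak{c}_n^2 K_n \lambda + o(\lambda)$ strictly dominates at order $\lambda$ the positive bulk contribution $m^2 \int_{\mathcal{C}} V_\lambda^2 = o(\lambda)$.

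Writing $a_\lambda := \int_{\mathcal{C}}(|\nabla V_\lambda|^2 + m^2 V_\lambda^2)\,dxdt - m\int_{\Omega} V_\lambda(x,0)^2\,dx$ and $b_\lambda := \int_{\Omega} |V_\lambda(x,0)|^{2n/(n-1)}\,dx$, a one-variable maximization in $t$ gives
\[
\max_{t \geq 0} I_{e,m}(tV_\lambda) = \frac{1}{2n}\, \frac{a_\lambda^n}{b_\lambda^{n-1}}.
\]
Saturation of \eqref{eq-sharp-trace} at $W_1$ reads $\bigl(\int|\nabla W_1|^2\bigr)^n \bigl(\int w_1^{2n/(n-1)}\bigr)^{-(n-1)} = \mathcal{S}_n^{-2n}$, so the asymptotics above produce
\[
\frac{a_\lambda^n}{b_\lambda^{n-1}} = \mathcal{S}_n^{-2n}\bigl(1 - c\,m\,\lambda + o(\lambda)\bigr), \qquad c := \frac{n\,\mathfrak{c}_n^2\, K_n}{\int_{\mathbb{R}^{n+1}_+}|\nabla W_1|^2} > 0.
\]
For $\lambda$ sufficiently small this is strictly less than $\mathcal{S}_n^{-2n}$. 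Fixing such a $\lambda$ and any $T>0$ with $I_{e,m}(TV_\lambda) < 0$, the path $\gamma(s) := sTV_\lambda$ lies in $\Gamma$, hence
\[
L_{e,m} \leq \max_{s\in[0,1]} I_{e,m}(\gamma(s)) = \max_{t \in [0,T]} I_{e,m}(tV_\lambda) < \frac{1}{2n}\mathcal{S}_n^{-2n}.
\]
The ``in particular'' clause then follows immediately from Lemma \ref{lem-ce-3}.

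The main obstacle is arranging the expansions so that the $O(\lambda)$ gain from $-m\int u^2$ genuinely dominates every other lower-order correction; the delicate borderline is $n = 3$, where $\int_{\mathcal{C}} V_\lambda^2$ carries an extra logarithmic factor, but still remains $o(\lambda)$ so the strategy survives.
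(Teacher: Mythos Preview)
Your proof is correct and follows essentially the same route as the paper's: test along the ray through a cutoff extremal bubble $\eta(x)W_{\lambda,\xi_0}$, expand each term as $\lambda\to 0$, and use the one-variable maximization $\max_t I_{e,m}(tV_\lambda)=\tfrac{1}{2n}a_\lambda^n/b_\lambda^{n-1}$ to show that the $-m\int V_\lambda(x,0)^2\sim c\lambda$ term forces the value strictly below $\tfrac{1}{2n}\mathcal{S}_n^{-2n}$. You are in fact slightly more careful than the paper at the borderline $n=3$, where $\int_{\mathbb{R}^{n+1}_+}W_{1,0}^2$ diverges and the cylinder integral is only $O(\lambda^2|\log\lambda|)$; the paper's bound $\int_{\mathcal{C}}\Psi_\lambda^2\leq \lambda^2\int_{\mathbb{R}^{n+1}_+}W_{1,0}^2$ glosses over this, whereas your treatment correctly identifies it as $o(\lambda)$ in all dimensions $n\geq 3$.
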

\begin{proof}
For any given $\Psi \in H_{0,L}^1 (\mathcal{C})$ we have
\begin{equation*}
I_{e,m} (t \Psi) = \frac{1}{2} t^2 \alpha - \frac{n-1}{2n} t^{\frac{2n}{n-1}} \beta,
\end{equation*}
where 
\begin{equation*}
\alpha = \int_{\mathcal{C}} |\nabla \Psi(x,t)|^2 + m^2 \Psi(x,t)^2\,dxdt - m\int_{\Omega}\Psi(x,0)^2\,dx  \quad \textrm{and}\quad \beta = \int_{\Omega} |\Psi(x,0)|^{\frac{2n}{n-1}} dx.
\end{equation*}
It is easy to see that the maximum value of the map $t \in (0,\infty) \mapsto I_{e,m} (t \Psi)$ is attained at $t_0 = (\alpha/\beta)^{\frac{n-1}{2}}$ and the value is equal to 
\begin{equation*}
\begin{split}
I_{e,m}  (t_0 \Psi) &= \frac{1}{2n} \alpha^n \beta^{-(n-1)}
\\
& = \frac{1}{2n} \frac{ \left( \int_{\mathcal{C}} |\nabla \Psi|^2 + m^2 \Psi^2\, dx dt -m\int_{\Omega}\Psi(x,0)^2 dx\right)^n}{ \left( \int_{\Omega} |\Psi(x,0)|^{\frac{2n}{n-1}} dx \right)^{n-1}}.
\end{split}
\end{equation*}
In order to finish the proof, we need to find a function $\Psi \in H_{0,L}^1 (\mathcal{C})$ such that 
\begin{equation*}
\max_{t>0} I_{e,m}(t \Psi) < \frac{1}{2n} \mathcal{S}_n^{-2n}.
\end{equation*}
We may suppose $0 \in \Omega$. We take $\Psi_{\lambda}(x,t) := \phi(x)W_{\lambda, 0}(x,t)$ where $W_{\lambda,0}$ is defined in \eqref{wlyxt} and $\phi \in C^\infty_c(\Omega)$ satisfying $\phi = 1$ on some ball $B_\rho(0) \subset \Omega$.
We are now going to estimate each term of $I_{e,m}(t \Psi_{\lambda})$ using the decay $W_{1,0}(z) \leq C/(1+|z|)^{n-1}$ for $z \in \mathbb{R}^{n+1}_{+}$.
First we estimate
\begin{equation*}
\begin{split}
\int_{\mathcal{C}} |\nabla \Psi_{\lambda}|^2 dx dt & = \int_{\mathcal{C}} |\nabla V_{\lambda}|^2 \phi^2\,dxdt +  \int_{\mathcal{C}}  2(\phi V_{\lambda})\nabla V_{\lambda}\cdot\nabla\phi + V_{\lambda}^2|\nabla \phi|^2\, dxdt
\\
&= \int_{\mathbb{R}^{n+1}_{+}}|\nabla V_{\lambda}|^2 dx dt + \int_{\mathbb{R}^{n+1}_{+}} (1- \phi^2 )|\nabla V_{\lambda}|^2 dx dt + O (\lambda^{n-1})
\\
& = \int_{\mathbb{R}^{n+1}_{+}} |\nabla V|^2 dx dt + O(\lambda^{n-1}),
\end{split}
\end{equation*}
and similarly, 
\begin{equation*}
\begin{split}
\int_{\Omega}|\Psi_{\lambda}(x,0)|^{\frac{2n}{n-1}} dx & = \int_{\mathbb{R}^{n}} |W_{\lambda,0}|^{\frac{2n}{n-1}}(x,0) \phi^{\frac{2n}{n-1}}(x,0) dx
\\
& = \int_{\mathbb{R}^n} |W_{\lambda,0}(x,0)|^{\frac{2n}{n-1}} dx + \int_{\mathbb{R}^n} |W_{\lambda}(x,0)|^{\frac{2n}{n-1}} ( \phi(x,0)^{\frac{2n}{n-1}} - 1)\,dx
\\
& = \int_{\mathbb{R}^n} |W_{1,0}(x,0)|^{\frac{2n}{n-1}} dx + O(\lambda^{n}).
\end{split}
\end{equation*}
Also we easily see that
\begin{equation*}
\int_{\mathcal{C}} m^2 \Psi_{\lambda}^2\,dxdt \leq \int_{\mathbb{R}^{n+1}_{+}} m^2 W_{\lambda,0}^2\,dxdt = C m^2 \lambda^{-(n-1)}\lambda^{n+1} \int_{\mathbb{R}^{n+1}_{+}} W_{1,0}^2\,dxdt = O(\lambda^{2}),
\end{equation*}
and
\begin{equation*}
\begin{split}
\int_{\Omega} m \Psi_{\lambda}^2 (x,0)\,dx &= m \lambda\int_{\mathbb{R}^n} W_{1,0}^2 (x,0)\,dx + m \int_{\mathbb{R}^n}(\phi^2-1)W_{\lambda,0}^2(x,0)\,dx
\\
&= m \lambda\int_{\mathbb{R}^n} W_{1,0}^2 (x,0)\,dx + O(\lambda^{n-1}).
\end{split}
\end{equation*}
Merging the estimates above, we deduce that for small $\lambda > 0$,
\begin{equation*}
\begin{split}
\sup_{t >0} E (t \psi_{\lambda})& = \frac{1}{2n} \frac{\left( \int_{\mathbb{R}^{n+1}_{+}} |\nabla W_{1,0}|^2\,dxdt 
-m\lambda\int_{\mathbb{R}^n} W_{1,0}^2(x,0)\, dx +O(\lambda^2)\right)^n}{\left( \int_{\mathbb{R}^n} |W_{1,0}(x,0)|^{\frac{2n}{n-1}} dx + O (\lambda^{n}) \right)^{n-1}}
\\
&< \frac{1}{2n} \frac{\left( \int_{\mathbb{R}^{n+1}_+} |\nabla W_{1,0}|^2 dxdt \right)^n}{\left( \int_{\mathbb{R}^n} |W_{1,0}(x,0)|^{\frac{2n}{n-1}} dx \right)^{n-1}} 
\\
&= \frac{1}{2n} \int_{\mathbb{R}^n} |W_{1,0}(x,0)|^{\frac{2n}{n-1}} dx = \frac{1}{2n}\mathcal{S}_n^{-2n},
\end{split}
\end{equation*}
where, in the last equalities, we have used that
\begin{equation*}
\left( \int_{\mathbb{R}^n} |W_{1,0}(x,0)|^{\frac{2n}{n-1}} dx \right)^{\frac{n-1}{2n}} = \mathcal{S}_n\left( \int_{\mathbb{R}^{n+1}_{+}} |\nabla W_{1,0}|^2 dx dt \right)^{1/2} 
= \mathcal{S}_n \left( \int_{\mathbb{R}^n} |W_{1,0}(x,0)|^{\frac{2n}{n-1}}dx \right)^{1/2},
\end{equation*}
which shows that
\begin{equation*}
\int_{\mathbb{R}^n} |W_{1,0}(x,0)|^{\frac{2n}{n-1}} dx = \mathcal{S}_n^{-2n}.
\end{equation*}
The proof is finished.
\end{proof}

We are ready to complete the whole the proof of Theorem \ref{thm-main-1}.
\begin{proof}[Proof of Theorem \ref{thm-main-1} for the critical case $p=\frac{n+1}{n-1}$]
To complete the proof, it remains to show $I_{e,m}$ enjoys the mountain pass geometry. In other words, we show that
\begin{itemize}
\item[(MP1)] there exists $r_0 > 0$ such that $I_{e,m}(V) \geq 0$ for any $v \in H^1_{0,L}(\mathcal{C})$ with $\|v\| \leq r_0$
and $\inf\{I_{e,m} (V) ~|~ V \in H^1_{0,L}(\mathcal{C}),\, \|V\| = r_0\} > 0$;
\item[(MP2)] there exists $V_0 \in H^1_{0,L}(\mathcal{C})$ such that $I_{e,m}(V_0) < 0$. 
\end{itemize}
By the trace Sobolev embedding and Proposition \ref{lem-ce-1}, wee see that
\begin{equation*}
I_{e,m} (U) \geq \frac{1}{C} \| U\|^2 - C  \| U\|^{\frac{2n}{n-1}}
\end{equation*}
so (MP1) holds. It is easy to see (MP2) also holds by observing the scaling map $t \mapsto I_{e,m}(tV)$.
Then the existence of a (PS) sequence of $I_{e,m}$ at the mountain pass level $L_{e,m}$ follows from the standard pseudo gradient flow argument (see \cite{W}) so that 
we have a critical point of $I_{e,m}$ with the level $L_{e,m}$.
Finally, we note that for any nontrivial solution $U$ to \eqref{eq-lm}, one must have
\[
\frac{d}{dt}\bigg|_{t=1}I_{e,m}(tU) = I_{e,m}'(U)U = 0,
\]
which shows that
\[
I_{e,m}(U) = \max_{t>0}I_{e,m}(tU).
\]
This means that $L_{e,m}$ is the least energy level. The proof is finished. 
\end{proof}

\section{Existence in $H^{1/2}$ supercritical case: Proof of Theorem \ref{thm-3} }\label{sec-ex-sup}



In this section, we show that the problem \eqref{eq-main} has a nontrivial solution even for the supercritical case $\frac{n+1}{n-1} < p < \frac{n+2}{n-2}$
under the assumption on $\Omega$ that the equation \eqref{eq-2} admits a non-degenerate solution $u_\infty \in H^1_0(\Omega)$, i.e., 
we assume there exists a nontrivial solution $u_{\infty} \in H^1_0(\Omega)$ to
\begin{equation}\label{eq-local}
-\Delta u=|u|^{p-1}u \quad\textup{in }\Omega, \quad u  = 0 \quad \text{on } \partial\Omega
\end{equation}
such that the linearized equation 
\[
-\Delta v = p|u_\infty|^{p-1}v \quad \text{in } \Omega, \quad v  = 0 \quad \text{on } \partial\Omega
\]
admits only trivial solution. 
By the scaling $u \mapsto (2m)^{\frac{1}{p-1}}u$, the equation \eqref{eq-main} is transformed to
\begin{equation}\label{eq-main-tran}
2m \mathcal{P}_mu = |u|^{p-1}u.
\end{equation}
Then, we look for a solution $u_m$ of \eqref{eq-main-tran} of the form
\begin{equation}\label{eq-2-0}
u_m=u_\infty+w.
\end{equation}
After inserting \eqref{eq-2-0} into the equation \eqref{eq-main-tran} and doing some algebraic manipulation, we get
\begin{equation}\label{eq-6-11}
\begin{split}
\left[ 2m\mathcal{P}_m - p|u_{\infty}|^{p-1}\right] w =& \Big\{ |u_{\infty} +w|^{p-1}(u_\infty+w) - |u_{\infty}|^{p-1}u_\infty - p|u_{\infty}|^{p-1}\Big\} 
\\
&\quad+ \left[ (-\Delta) - 2m\mathcal{P}_m\right]u_{\infty}.
\end{split}
\end{equation}
We now define operators
\begin{equation}\label{eq-6-3}
\left\{
\begin{split}
D_m(\phi) &:=  \left[ (-\Delta) - 2m\mathcal{P}_m\right]\phi,
\\
A_m(\phi) &:= \left[ I -\left(2m\mathcal{P}_m\right)^{-1} pu_{\infty}^{p-1} \right]\phi,
\\
Q(\phi) &:= |u_{\infty} +\phi|^{p-1}(u_\infty+\phi) - |u_{\infty}|^{p-1}u_\infty - p|u_{\infty}|^{p-1}\phi
\end{split}\right.
\end{equation}
acting on suitable function spaces on $\Omega$. We then, in a formal sense, arrive at the equation $w = \mathcal{L}_m \left[ Q(w) +D_m(u_{\infty})\right]$ from \eqref{eq-6-11} where we denote $\mathcal{L}_m := A_m^{-1}(2m\mathcal{P}_m)^{-1}$. We define
\[
\Phi_m(\phi) := \mathcal{L}_m \left[ Q(\phi) +D_m(u_{\infty})\right].
\]
For Theorem \ref{thm-3}, it suffices to show $\Phi$ is contractive on a small ball in $L^q(\Omega)$ for every $q \geq np$.
Indeed, if this is done, then we conclude that $u=u_\infty+w$, where $w$ is the (unique) fixed point of $\Phi$, is a strong solution of \eqref{eq-main-tran}.

We provide the following useful lemmas and propositions in order. Recall that $\{\lambda_n,\, \phi_n\}_{n=1}^\infty$ denotes the complete $L^2$ orthonormal system of eigenvalues and eigenfunctions of
\[
\left\{\begin{aligned}
-\Delta \phi &= \lambda \phi \quad \text{ in } \Omega \\
\phi &= 0 \quad \text{ on } \partial\Omega.
\end{aligned}\right.
\]

\begin{lem}\label{lem-6-0}
The inverse map $\mathcal{P}_m^{-1}$ given by
\[
\mathcal{P}_m^{-1}\phi := \sum_{i=1}^\infty \frac{c_i}{\sqrt{\lambda_i+m^2}-m}\phi_i, \quad \phi = \sum_{i=1}^\infty c_i\phi_i
\]
is a bounded map on $L^q(\Omega)$ to $W^{1,q}_0(\Omega)$ for any $q \in [2,\, \infty)$. In addition, there exists a positive constant $C_q$ independent of sufficiently large $m$ such that
\[
\|(2m\mathcal{P}_m)^{-1}\phi\|_{W^{1,q}_0} \leq C_q\|\phi\|_{L^q}. 
\]
\end{lem}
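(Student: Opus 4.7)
My plan is to exploit the algebraic identity $\sqrt{\lambda+m^2}-m=\lambda/(\sqrt{\lambda+m^2}+m)$, which, in the spectral calculus of the Dirichlet Laplacian, rewrites
\[
(2m\mathcal{P}_m)^{-1}=(-\Delta)^{-1}\cdot\frac{\sqrt{-\Delta+m^2}+m}{2m}.
\]
Setting $u=(2m\mathcal{P}_m)^{-1}\phi$ (which lies in $H_0^1(\Omega)$ whenever $\phi\in L^2$), a direct spectral computation gives $-\Delta u=\tfrac{1}{2m}(\sqrt{-\Delta+m^2}+m)\phi$, so factoring through the Riesz transform $R_j:=\partial_j(-\Delta)^{-1/2}$ yields
\[
\partial_j u = R_j\cdot\frac{1}{2m}F_m(-\Delta)\phi,\qquad F_m(\lambda):=\frac{\sqrt{\lambda+m^2}+m}{\sqrt{\lambda}}.
\]
Thus the target estimate reduces to two ingredients: (i) $L^q$-boundedness of each $R_j$, and (ii) $L^q$-boundedness of the spectral multiplier $\tfrac{1}{2m}F_m(-\Delta)$, uniformly in sufficiently large $m$.

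For (i), I would invoke the generalized H\"ormander--Mikhlin theorem of Duong--Sikora--Yan \cite{DSY}, applied to the Dirichlet Laplacian on $\Omega$; its heat semigroup satisfies Gaussian upper bounds, so DSY yields that $R_j$ is bounded on $L^q(\Omega)$ for every $q\in(1,\infty)$. For (ii), since the Dirichlet spectrum is contained in $[\lambda_1,\infty)$, I first truncate: pick $\chi\in C^\infty(\mathbb{R})$ with $\chi\equiv 1$ on $[\lambda_1/2,\infty)$ and $\chi\equiv 0$ near $0$, and set $\widetilde F_m(\lambda):=\chi(\lambda)F_m(\lambda)/(2m)$. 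Then $\widetilde F_m(-\Delta)=\tfrac{1}{2m}F_m(-\Delta)$ as operators on $L^q(\Omega)$, and $\widetilde F_m$ is a smooth bounded function on $[0,\infty)$. The DSY spectral multiplier theorem then applies provided one checks the symbol condition
\[
\sup_{t>0}\bigl\|\eta(\cdot)\widetilde F_m(t\,\cdot)\bigr\|_{C^N(\mathbb{R})}\leq C
\]
independent of $m$ large, for a fixed bump $\eta\in C^\infty_c((0,\infty))$ and some $N=N(n)$. The key heuristic is that on compact subsets of $(0,\infty)$, $F_m(\lambda)/(2m)\to\lambda^{-1/2}$ smoothly as $m\to\infty$, while for $\lambda$ large, $F_m(\lambda)/(2m)\sim 1/(2m)\to 0$.

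Combining (i) and (ii) gives $\|\nabla u\|_{L^q}\leq C_q\|\phi\|_{L^q}$ with $C_q$ independent of $m$ large; the Poincar\'e inequality controls $\|u\|_{L^q}$, and the membership $u\in W_0^{1,q}(\Omega)$ is inherited from $u\in H_0^1\cap W^{1,q}$. The first assertion of the lemma (without uniformity in $m$) follows from the same chain with $F_m$ replaced by a fixed-$m$ analogue. The main obstacle is verifying the uniform-in-$m$ DSY symbol condition for $\widetilde F_m$: one must differentiate $F_m(t\lambda)/(2m)$ and carefully bookkeep the $m$-dependence of each derivative, using expansions such as $\sqrt{t\lambda+m^2}+m=2m+\tfrac{t\lambda}{2m}+O(t^2\lambda^2/m^3)$ in the regime $t\lambda\lesssim m^2$ and the asymptote $\sqrt{t\lambda+m^2}+m\sim\sqrt{t\lambda}$ in the regime $t\lambda\gtrsim m^2$, to confirm in each case that the $C^N$ norm stays bounded uniformly in both $t$ and $m$.
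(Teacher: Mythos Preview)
Your approach is essentially the same as the paper's: both reduce the estimate to $L^q$-boundedness of the spectral multiplier $\sqrt{\lambda}\big/\big(2m(\sqrt{\lambda+m^2}-m)\big)$ (your $\tfrac{1}{2m}F_m$ is exactly this symbol), established via the Duong--Sikora--Yan theorem, together with the equivalence $\|u\|_{W^{1,q}_0}\sim\|\sqrt{-\Delta}\,u\|_{L^q}$. The only organizational difference is that the paper verifies the uniform-in-$m$ symbol bound by writing $\sqrt{\lambda}/P_m(\lambda)=\sqrt{\lambda}/P_\infty(\lambda)+\sqrt{\lambda}\big(P_m^{-1}-P_\infty^{-1}\big)$ and invoking the difference estimate of its Appendix, whereas you propose to check the Mikhlin condition for $\widetilde F_m$ directly by splitting into the regimes $t\lambda\lesssim m^2$ and $t\lambda\gtrsim m^2$; both work, and the paper's difference trick is slightly cleaner bookkeeping.

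One small inaccuracy to flag: the Riesz transforms $R_j=\partial_j(-\Delta)^{-1/2}$ are not spectral multipliers of $-\Delta$, so their $L^q$-boundedness is not a consequence of the DSY theorem as you state. The paper simply asserts the norm equivalence $\|\cdot\|_{W^{1,q}_0}\sim\|\sqrt{-\Delta}\cdot\|_{L^q}$ as known (which, for the Dirichlet Laplacian on a smooth bounded domain, follows from standard elliptic $W^{2,q}$ theory and interpolation, or from the Riesz transform results of Shen and others), and you should cite that rather than DSY for step (i).
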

\begin{proof}
This follows from Theorem \ref{thm-a-4} and the fact that $\|\cdot\|_{W^{1,q}_0}$ is equivalent to $\|\sqrt{-\Delta}\cdot\|_{L^q}$. 
\end{proof}

From Lemma \ref{lem-6-0} and the fact that $u_\infty \in L^\infty(\Omega)$, we see that $A_m$ is well-defined and bounded on $L^q(\Omega)$ for $q \geq 2$.
The next lemma shows it is also invertible. 
\begin{lem}\label{lem-6-1}
Fix an arbitrary  $q \in [2,\, \infty)$. Then for sufficiently large $m > 0$, the operator $A_m$ is invertible on $L^q (\Omega)$.
In addition, there exists a constant $C_q >0$ independent of large $m$ such that
\begin{equation}
\|A_m^{-1} \phi\|_{L^q} \leq C_q \|\phi\|_{L^q}.
\end{equation}
\end{lem}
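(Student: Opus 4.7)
My plan is to view $A_m$ as a perturbation of its formal $m \to \infty$ limit
\[
A_\infty := I - (-\Delta)^{-1}\bigl( p u_\infty^{p-1}\,\cdot\,\bigr),
\]
which is the Lane-Emden linearization in disguise. If $A_\infty$ is invertible on $L^q(\Omega)$ and $A_m \to A_\infty$ in operator norm on $L^q(\Omega)$, then a Neumann series argument furnishes both the invertibility of $A_m$ for large $m$ and a uniform bound on $\|A_m^{-1}\|_{\mathcal L(L^q)}$.

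For the invertibility of $A_\infty$: since $u_\infty \in C^{2,\alpha}(\overline{\Omega}) \subset L^\infty(\Omega)$, multiplication by $pu_\infty^{p-1}$ is bounded on $L^q$, while $(-\Delta)^{-1}$ maps $L^q$ into $W^{2,q} \cap W^{1,q}_0$ by Calderón-Zygmund and is therefore compact as an endomorphism of $L^q$. The Fredholm alternative reduces invertibility to triviality of $\ker A_\infty$: any $\phi \in \ker A_\infty$ lifts to $W^{2,q}\cap W^{1,q}_0$ and solves $-\Delta\phi = p u_\infty^{p-1}\phi$ in $H^1_0(\Omega)$, hence vanishes by the non-degeneracy hypothesis. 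For the convergence $A_m \to A_\infty$, factoring out the bounded multiplication by $pu_\infty^{p-1}$ reduces the task to showing $R_m := (2m\mathcal P_m)^{-1} - (-\Delta)^{-1} \to 0$ in $\mathcal L(L^q)$. The spectral representation
\[
R_m \phi = \sum_{i=1}^\infty \frac{c_i}{2m(\sqrt{\lambda_i + m^2} + m)}\,\phi_i
\]
gives $\|R_m\|_{\mathcal L(L^2)} \leq 1/(4m^2)$ immediately, while Lemma \ref{lem-6-0} together with standard $L^{q_0}$ theory for the Dirichlet Laplacian supplies a uniform bound $\|R_m\|_{\mathcal L(L^{q_0})} \leq C_{q_0}$ for any finite $q_0 > q$. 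Riesz-Thorin interpolation between $L^2$ and $L^{q_0}$ then yields $\|R_m\|_{\mathcal L(L^q)} \leq C_q\, m^{-2\theta}$ for some $\theta = \theta(q,q_0) \in (0,1]$.

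The main obstacle is the $L^q$-operator-norm convergence of $R_m$: the $L^2$ decay is free from the spectral theorem, but propagating it to other exponents rests on the $L^q$-boundedness of $(2m\mathcal P_m)^{-1}$ furnished by Lemma \ref{lem-6-0}, itself a consequence of the Duong-Sikora-Yan Hörmander-Mikhlin theorem. Once this is in place, choosing $m$ so large that $\|A_m - A_\infty\|_{\mathcal L(L^q)} < 1/\bigl(2\|A_\infty^{-1}\|_{\mathcal L(L^q)}\bigr)$ makes $A_m = A_\infty\bigl(I + A_\infty^{-1}(A_m - A_\infty)\bigr)$ invertible with $\|A_m^{-1}\|_{\mathcal L(L^q)} \leq 2\|A_\infty^{-1}\|_{\mathcal L(L^q)}$, completing the proof with $C_q := 2\|A_\infty^{-1}\|_{\mathcal L(L^q)}$.
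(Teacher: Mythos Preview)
Your argument is correct and follows the same overall architecture as the paper: show the limit operator $A_\infty = I - (-\Delta)^{-1}(p|u_\infty|^{p-1}\,\cdot\,)$ is invertible on $L^q$ via the Fredholm alternative and non-degeneracy, then recover $A_m^{-1}$ by a Neumann series once $A_m - A_\infty \to 0$ in $\mathcal L(L^q)$.

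The one substantive difference is in how you establish $\|R_m\|_{\mathcal L(L^q)} \to 0$ for $R_m = (2m\mathcal P_m)^{-1} - (-\Delta)^{-1}$. The paper applies the Duong--Sikora--Yan multiplier theorem directly to the symbol $\frac{1}{P_\infty(\lambda)} - \frac{1}{P_m(\lambda)}$ (their Theorem~\ref{thm difference}, estimate \eqref{eq-a-6}) and obtains the sharp uniform rate $\|R_m\|_{\mathcal L(L^q)} \leq C_q/m^2$ for every $q \in (1,\infty)$. You instead read off the $L^2$ rate $1/(4m^2)$ from the explicit spectral coefficients, take a uniform-in-$m$ bound on $L^{q_0}$ from Lemma~\ref{lem-6-0}, and Riesz--Thorin interpolate to get $O(m^{-2\theta})$ with $\theta \in (0,1]$. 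This is perfectly sufficient for the lemma (any positive power of $1/m$ will do), and it has the pleasant feature that it avoids the symbol estimates for the \emph{difference} multiplier; on the other hand it still leans on the H\"ormander--Mikhlin machinery through Lemma~\ref{lem-6-0}, and it sacrifices the sharp rate, which the paper actually uses later (Proposition~\ref{prop-remainder} and the final error bound in Theorem~\ref{thm-3}).
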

\begin{proof}
We first define an operator $A:L^q(\Omega)\to L^q(\Omega)$ as follows:
\[
A(\phi) := (I - K)(\phi), \quad K(\phi) := (-\Delta)^{-1}(p|u_{\infty}|^{p-1}\phi)
\]
Note that, due to the $W^{2,q}$ elliptic estimate and compact Sobolev embedding, the operator $K$ is compact on $L^q(\Omega)$.
Since $u_\infty$ is non-degenerate, the kernel of $A$ is trivial. 
Then the Fredholm altanative implies $A$ is invertible and its inverse $A^{-1}$ is bounded, i.e., there exists a constant $C > 0$ depending only on $n, p, q, \Omega$ and $u_{\infty}$ such that
\[
\|A^{-1}(\phi)\|_{L^q} \leq C \|\phi\|_{L^q}.
\]
We then compute that
\begin{equation}
\begin{split}
A_m & = \left( I - (-\Delta)^{-1} p|u_{\infty}|^{p-1} + \left( (-\Delta)^{-1} -(2m\mathcal{P}_m)^{-1}\right) p|u_{\infty}|^{p-1}\right)
\\
& = A \bigg[ I + A^{-1} ((-\Delta)^{-1} -(2m\mathcal{P}_m)^{-1}) p|u_{\infty}|^{p-1}\bigg]
\end{split}
\end{equation}
We claim that the operator norm 
$\left\|((-\Delta)^{-1} -(2m\mathcal{P}_m)^{-1}) p|u_{\infty}|^{p-1}\right\|_{\mathcal{L}(L^q)}$ can be made arbitrarily small as $m \to \infty$
so that $A_m$ is invertible on $L^q(\Omega)$ and $\|A_m^{-1}\|_{\mathcal{L}(L^q)}$ is independent of sufficiently large $m$.
Indeed, by \eqref{eq-a-6} in Theorem \ref{thm difference}, there exists a constant $C$ independent of large $m$ such that
\[
\|(-\Delta)^{-1}-(2m\mathcal{P}_m)^{-1}\|_{\mathcal{L}(L^q)}\leq\frac{C}{m^2},
\] 
which proves the claim. This completes the proof.
\end{proof}

Combining Lemma \ref{lem-6-0} and \ref{lem-6-1}, we get the following proposition.
\begin{prop}\label{prop-linear-est} For any $q \geq 2$, there exists a constant $C_q >0$ such that
\begin{equation}
\|\mathcal{L}_m \phi \|_{L^q (\Omega)} \leq C_q \|\phi\|_{L^n (\Omega)}\quad \text{for every } \phi \in L^n (\Omega).
\end{equation}
\end{prop}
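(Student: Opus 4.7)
The plan is to factor $\mathcal{L}_m = A_m^{-1}\circ(2m\mathcal{P}_m)^{-1}$ and chain together the three ingredients already built up: the $(2m\mathcal{P}_m)^{-1}$ regularity from Lemma \ref{lem-6-0}, the Sobolev embedding at the borderline exponent $n$, and the uniform invertibility of $A_m$ from Lemma \ref{lem-6-1}. The appearance of $L^n$ on the right-hand side (rather than $L^q$) is precisely what allows one to pass through $W^{1,n}_0(\Omega)$ and then embed into every $L^q$.

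First, given $\phi\in L^n(\Omega)$, I would apply Lemma \ref{lem-6-0} with exponent $n$ to obtain
\[
\|(2m\mathcal{P}_m)^{-1}\phi\|_{W^{1,n}_0(\Omega)}\leq C\|\phi\|_{L^n(\Omega)},
\]
with $C$ independent of sufficiently large $m$. Next, I would invoke the Sobolev embedding $W^{1,n}_0(\Omega)\hookrightarrow L^q(\Omega)$, which holds for \emph{every} finite $q\in[2,\infty)$ on the bounded domain $\Omega$ (using boundedness of $\Omega$ when $q<n$ and the critical Sobolev embedding when $q\geq n$). This promotes the previous bound to
\[
\|(2m\mathcal{P}_m)^{-1}\phi\|_{L^q(\Omega)}\leq C_q\|\phi\|_{L^n(\Omega)}.
\]

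Finally, I would apply Lemma \ref{lem-6-1} at the exponent $q\geq 2$ to conclude $\|A_m^{-1}\psi\|_{L^q(\Omega)}\leq C_q\|\psi\|_{L^q(\Omega)}$ for large $m$, uniformly in $m$. Composing these three estimates on $\mathcal{L}_m\phi=A_m^{-1}(2m\mathcal{P}_m)^{-1}\phi$ yields the claimed bound, with all constants depending on $q,n,p,\Omega,u_\infty$ but not on $m$ large. There is no genuine obstacle: the proposition is a bookkeeping step that packages Lemmas \ref{lem-6-0}--\ref{lem-6-1} with the critical-exponent Sobolev embedding, and the uniformity in $m$ is inherited directly from the uniformity already secured in those two lemmas.
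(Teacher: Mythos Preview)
Your proposal is correct and follows exactly the paper's own argument: the paper's proof simply says that the result follows immediately from Lemma \ref{lem-6-0}, Lemma \ref{lem-6-1}, and the embedding $W^{1,n}_0(\Omega)\hookrightarrow L^q(\Omega)$ for every $q\geq 2$. Your chain of estimates spells out precisely this composition, with the same order of application and the same uniformity-in-$m$ inherited from the two lemmas.
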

\begin{proof}
This immediately follows from Lemma \ref{lem-6-0}, Lemma \ref{lem-6-1} and the embedding $W^{1,n}_0(\Omega) \hookrightarrow L^q(\Omega)$ for every $q \geq 2$.
\end{proof}


Next, we establish the estimates for the nonlinear operator $Q(w)$.
\begin{prop}\label{prop-nonlinear-est}
Let $q \geq np$. Then there exists a constant $C > 0$ depending only on $n,\, p,\, q$ and $\Omega$ such that the nonlinear operator $Q$ defined in \eqref{eq-6-3} satisfies the following estimates:
\begin{enumerate}
\item If $1<p\leq 2$, then
\begin{equation}
\|Q(w) - Q(v)\|_{L^n} \leq C \left( \|w\|_{L^q} + \|v\|_{L^q}\right)^{p-1}\|w-v\|_{L^q}.
\end{equation}
\item If $2 <p$, then
\begin{equation}
\|Q(w) - Q(v)\|_{L^n} \leq C \left( \|u_{\infty}\|_{L^q} + \|w\|_{L^q} + \|v\|_{L^q} \right)^{p-2} \left( \|w\|_{L^q} + \|v\|_{L^q}\right) \|w-v\|_{L^q}.
\end{equation}
\end{enumerate}
\end{prop}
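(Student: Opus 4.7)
The plan is to write $Q$ as the remainder after linearizing $f(s):=|s|^{p-1}s$ at $u_\infty$, and then use the fundamental theorem of calculus together with pointwise estimates for $f'(s)=p|s|^{p-1}$, whose regularity depends on whether $p\leq 2$ or $p>2$. Specifically, writing
\[
Q(w)-Q(v)=\bigl[f(u_\infty+w)-f(u_\infty+v)\bigr]-f'(u_\infty)(w-v),
\]
and applying the fundamental theorem of calculus to the bracketed difference, I get
\[
Q(w)-Q(v)=(w-v)\int_0^1\!\bigl[f'(u_\infty+v+\tau(w-v))-f'(u_\infty)\bigr]\,d\tau.
\]

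For the subquadratic range $1<p\leq 2$, the map $s\mapsto |s|^{p-1}$ is $(p-1)$-H\"older continuous with $|f'(s_1)-f'(s_2)|\leq C|s_1-s_2|^{p-1}$, so the inner integrand is pointwise bounded by $C|v+\tau(w-v)|^{p-1}\leq C(|v|^{p-1}+|w|^{p-1})$. Thus
\[
|Q(w)-Q(v)|\leq C\,|w-v|\bigl(|v|^{p-1}+|w|^{p-1}\bigr).
\]
For the superquadratic range $p>2$, the mean value theorem gives $|f'(s_1)-f'(s_2)|\leq C(|s_1|+|s_2|)^{p-2}|s_1-s_2|$, and a similar pointwise bound yields
\[
|Q(w)-Q(v)|\leq C\,|w-v|\bigl(|u_\infty|+|v|+|w|\bigr)^{p-2}\bigl(|v|+|w|\bigr).
\]

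Next, in the $L^n$ norm I apply H\"older's inequality, choosing exponents designed to hit $L^{np}$ on every factor. In the subquadratic case, with $\tfrac{1}{n}=\tfrac{p-1}{np}+\tfrac{1}{np}$, H\"older gives
\[
\|Q(w)-Q(v)\|_{L^n}\leq C\bigl(\|v\|_{L^{np}}^{p-1}+\|w\|_{L^{np}}^{p-1}\bigr)\|w-v\|_{L^{np}}.
\]
In the superquadratic case, with $\tfrac{1}{n}=\tfrac{p-2}{np}+\tfrac{1}{np}+\tfrac{1}{np}$, H\"older yields
\[
\|Q(w)-Q(v)\|_{L^n}\leq C\bigl(\|u_\infty\|_{L^{np}}+\|v\|_{L^{np}}+\|w\|_{L^{np}}\bigr)^{p-2}\bigl(\|v\|_{L^{np}}+\|w\|_{L^{np}}\bigr)\|w-v\|_{L^{np}}.
\]
Finally, since $\Omega$ is bounded and $q\geq np$, the embedding $L^q(\Omega)\hookrightarrow L^{np}(\Omega)$ upgrades each $L^{np}$ norm to an $L^q$ norm, yielding the claimed estimates.

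The routine steps are the H\"older exponent bookkeeping. The main technical issue to watch is the subquadratic case, since $f'$ is only $(p-1)$-H\"older rather than Lipschitz; one must split the pointwise power $(|v|+|w|)^{p-1}$ using $p-1\leq 1$ so that no product involving $|u_\infty|$ survives, producing the cleaner form in part~(1). Also, one must verify the exponents $\tfrac{p-1}{np}+\tfrac{1}{np}=\tfrac{1}{n}$ and $\tfrac{p-2}{np}+\tfrac{2}{np}=\tfrac{1}{n}$ are exactly matched to the choice $q=np$, which is the borderline Sobolev-type exponent dictated by the later use of Proposition \ref{prop-linear-est} in the contraction argument.
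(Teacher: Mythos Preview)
Your proposal is correct and follows essentially the same approach as the paper: both write $Q(w)-Q(v)$ via the fundamental theorem of calculus as $(w-v)$ times an integral of $f'(u_\infty+\cdots)-f'(u_\infty)$, then use the $(p-1)$-H\"older continuity of $s\mapsto|s|^{p-1}$ when $1<p\le 2$ and the mean-value estimate when $p>2$ to obtain the same pointwise bounds, and finally apply H\"older's inequality. The only cosmetic difference is that the paper first embeds $L^{q/p}\hookrightarrow L^n$ and then applies H\"older to land in $L^q$, whereas you apply H\"older with the borderline exponents to land in $L^{np}$ and then use $L^q\hookrightarrow L^{np}$; the two are equivalent.
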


\begin{proof}
By the fundamental theorem of calculus, we write
\begin{align*}
Q(w)-Q(v)&=\Big\{|u_\infty+w|^{p-1}(u_\infty+w)-|u_\infty+v|^{p-1}(u_\infty+v)\Big\}\\
&\quad\quad-p |u_\infty|^{p-1}(w-v)\\
&=\int_0^1 \frac{d}{dt}\Big[|u_\infty+(1-t)v+tw|^{p-1}(u_\infty+(1-t)v+tw)\Big]dt\\
&\quad\quad-p |u_\infty|^{p-1}(w-v)\\
&=p\int_0^1\big(|u_\infty+(1-t)v+tw|^{p-1}-|u_\infty|^{p-1}\big)(w-v)dt.
\end{align*}
Suppose that $1<p\leq2$. Then, by the elementary inequality
$$||a|^\ell-|b|^\ell|\leq \big||a|-|b|\big|^\ell\leq |a-b|^\ell\quad\textup{ if }0<\ell<1,$$
we have
$$|Q(w)-Q(v)|\leq C(|w|+|v|)^{p-1}|w-v|,$$
from which and the fact that $q \geq np$, we get the estimate 
\begin{equation}\label{nonlinear estimate proof 1}
\begin{aligned}
\|Q(w)-Q(v)\|_{L^n} \leq  C\|Q(w)-Q(v)\|_{L^{\frac{q}{p}}}
&\leq C\big(\|w\|_{L^{q}}+\|v\|_{L^{q}}\big)^{p-1}\|w-v\|_{L^{q}}.
\end{aligned}
\end{equation}
If $p>2$, using the fundamental theorem of calculus again, we find
$$|Q(w)-Q(v)|\leq C(|u_\infty|+|w|+|v|)^{p-2}(|w|+|v|)|w-v|.$$
From this and by estimating as above, we see that
\begin{equation}\label{nonlinear estimate proof 2}
\begin{aligned}
\|Q(w)-Q(v)\|_{L^{n}}&\leq C\|Q(w)-Q(v)\|_{L^{\frac{q}{p}}} \\
&\leq C\big(\|u_\infty\|_{L^{q}}+\|w\|_{L^{q}}+\|v\|_{L^{q}}\big)^{p-2}\big(\|w\|_{L^q}+\|v\|_{L^{q}}\big)\|w-v\|_{L^{q}}.
\end{aligned}
\end{equation}
Thus, the proposition is proved.
\end{proof}

By inserting $v \equiv 0$ into the above estimates, we obtain the following. 
\begin{cor}\label{cor-nonlinear-est}
Let $q \geq np$. Then operator $Q$ is a map from $L^q(\Omega)$ to $L^n(\Omega)$.
In addition, there exists a positive constant $C$ depending only on $n,\, p,\, q$ and $\Omega$ such that
\[
\left\{\begin{aligned}
\|Q(w)\|_{L^n} &\leq C\|w\|_{L^q}^p \quad \text{if } 1 < p \leq 2; \\
\|Q(w)\|_{L^n} &\leq C(\|u_\infty\|_{L^q}+\|w\|_{L^q})^{p-2}\|w\|_{L^q}^{2} \quad \text{if } 2 < p.
\end{aligned}\right.
\]
\end{cor}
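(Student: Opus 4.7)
The plan is to reduce Corollary \ref{cor-nonlinear-est} directly to Proposition \ref{prop-nonlinear-est} by specializing the second argument to $v \equiv 0$. The first observation to make is that $Q(0) = 0$: indeed, from the definition
\[
Q(\phi) = |u_\infty + \phi|^{p-1}(u_\infty+\phi) - |u_\infty|^{p-1}u_\infty - p|u_\infty|^{p-1}\phi,
\]
setting $\phi = 0$ gives $|u_\infty|^{p-1}u_\infty - |u_\infty|^{p-1}u_\infty - 0 = 0$. Consequently, $Q(w) = Q(w) - Q(0)$, and the two claimed estimates fall out instantly by plugging $v = 0$ into the respective bounds of Proposition \ref{prop-nonlinear-est}.

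More precisely, if $1 < p \leq 2$, the bound $\|Q(w) - Q(v)\|_{L^n} \leq C(\|w\|_{L^q}+\|v\|_{L^q})^{p-1}\|w-v\|_{L^q}$ with $v = 0$ immediately yields
\[
\|Q(w)\|_{L^n} \leq C\|w\|_{L^q}^{p-1}\|w\|_{L^q} = C\|w\|_{L^q}^p.
\]
If $p > 2$, the bound $\|Q(w) - Q(v)\|_{L^n} \leq C(\|u_\infty\|_{L^q}+\|w\|_{L^q}+\|v\|_{L^q})^{p-2}(\|w\|_{L^q}+\|v\|_{L^q})\|w-v\|_{L^q}$ with $v = 0$ gives
\[
\|Q(w)\|_{L^n} \leq C(\|u_\infty\|_{L^q}+\|w\|_{L^q})^{p-2}\|w\|_{L^q}\cdot\|w\|_{L^q} = C(\|u_\infty\|_{L^q}+\|w\|_{L^q})^{p-2}\|w\|_{L^q}^2.
\]

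The fact that $Q$ indeed maps $L^q(\Omega)$ into $L^n(\Omega)$ for $q \geq np$ is then read off from the finiteness of the right-hand sides (which also uses $u_\infty \in L^\infty(\Omega) \subset L^q(\Omega)$, guaranteed by the regularity of solutions of the Lane--Emden equation on a smooth bounded domain). There is no real obstacle here; the content of the corollary was already carried out in Proposition \ref{prop-nonlinear-est}, and this corollary just packages the pointwise case $v = 0$ for later use in the contraction mapping argument.
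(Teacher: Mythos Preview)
Your proof is correct and is exactly the approach the paper takes: the corollary is stated immediately after Proposition \ref{prop-nonlinear-est} with the one-line justification ``by inserting $v \equiv 0$ into the above estimates.'' Your additional remark that $Q(0)=0$ and that $u_\infty \in L^\infty(\Omega) \subset L^q(\Omega)$ simply makes explicit what the paper leaves implicit.
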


Now we estimate the remainder term $D_m(u_\infty)$. 
\begin{prop}\label{prop-remainder} 
Let $q \geq 2$. Then there exists a constant $C > 0$ depending only on $n, q, p$ and $\Omega$ such that the following estimate holds:
\begin{equation}
\|D_m(u_\infty)\|_{L^q(\Omega)} \leq \left\{\begin{aligned} &\frac{C}{m^2}\|u_\infty\|_{W^{4,q}(\Omega)} \quad \text{if } p > 2; \\ &\frac{C}{m}\|u_\infty\|_{W^{3,q}(\Omega)} \quad \text{if } 1< p \leq 2. \end{aligned}\right.
\end{equation}
\end{prop}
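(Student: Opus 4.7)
The key observation driving the proof is the algebraic identity
\[
D_m = (-\Delta) - 2m\mathcal{P}_m = \mathcal{P}_m^2 = \frac{(-\Delta)^2}{(\sqrt{-\Delta + m^2} + m)^2},
\]
interpreted spectrally through the basis $\{\phi_i\}$. The first equality follows by directly expanding $(\sqrt{-\Delta+m^2} - m)^2 = -\Delta + 2m^2 - 2m\sqrt{-\Delta+m^2}$; the second uses $(\sqrt{-\Delta+m^2}-m)(\sqrt{-\Delta+m^2}+m) = -\Delta$. This reduces the proposition to a Fourier-multiplier estimate with the gain $1/m^2$ or $1/m$ made explicit.

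For the case $p > 2$, I factor
\[
D_m u_\infty = \Bigl[ (\sqrt{-\Delta + m^2} + m)^{-2} \Bigr] (-\Delta)^2 u_\infty.
\]
The scalar symbol $\sigma_1(\lambda) := (\sqrt{\lambda + m^2} + m)^{-2}$ is pointwise bounded by $(2m)^{-2}$ on $\lambda \geq 0$, and a direct computation of $|\lambda^k \sigma_1^{(k)}(\lambda)|$ shows that the H\"ormander-Mikhlin conditions hold with constants $C_k/m^2$, uniformly in $m$. Applying the generalized H\"ormander-Mikhlin theorem of Duong-Sikora-Yan \cite{DSY} on the Dirichlet Laplacian of $\Omega$ (the same tool already invoked in Lemma \ref{lem-6-0}), the bracketed operator is bounded on $L^q(\Omega)$ with norm $\leq C/m^2$ for every $q \in [2, \infty)$. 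Combined with the trivial bound $\|(-\Delta)^2 u_\infty\|_{L^q} \leq C\|u_\infty\|_{W^{4,q}}$, this yields the first estimate.

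For the case $1 < p \leq 2$, the regularity of $|u_\infty|^{p-1}u_\infty$ is insufficient to place $u_\infty$ in $W^{4,q}$, so I redistribute one derivative onto the multiplier:
\[
D_m u_\infty = \Bigl[ (-\Delta)^{1/2} (\sqrt{-\Delta + m^2} + m)^{-2} \Bigr] (-\Delta)^{3/2} u_\infty.
\]
A one-variable calculus argument (substituting $s = \sqrt{\lambda}$ and solving the critical point equation) shows that the symbol $\sigma_2(\lambda) := \sqrt{\lambda}/(\sqrt{\lambda + m^2} + m)^2$ attains its global maximum at $\lambda = 3m^2$ with value $\sqrt{3}/(9m)$, and again one checks the H\"ormander-Mikhlin conditions with constants $C_k/m$ uniform in $m$. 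The theorem of \cite{DSY} then produces an $L^q$-operator bound of order $1/m$. It remains to convert $\|(-\Delta)^{3/2} u_\infty\|_{L^q}$ into a $W^{3,q}$ bound, which I do by writing $(-\Delta)^{3/2} = (-\Delta) \cdot (-\Delta)^{1/2}$ and invoking the $L^q$-boundedness of $(-\Delta)^{1/2}$ from $W^{1,q}_0(\Omega)$ to $L^q(\Omega)$, itself a standard consequence of \cite{DSY}.

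The only real technical obstacle is verifying the H\"ormander-Mikhlin conditions uniformly in $m$ for the two $m$-dependent multipliers above; once this is done the proof is a clean application of \cite{DSY}. The two explicit pointwise maxima $\sigma_1 \leq 1/(4m^2)$ and $\sigma_2 \leq \sqrt{3}/(9m)$ are precisely what produces the factors $1/m^2$ and $1/m$ in the statement, making the dichotomy $p>2$ versus $1<p\leq 2$ transparent at the multiplier level.
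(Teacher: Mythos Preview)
Your proof is correct and rests on the same machinery as the paper (the Duong--Sikora--Yan multiplier theorem for the Dirichlet Laplacian), but you reach it by a cleaner route. The paper factors
\[
D_m = \left(\frac{1}{-\Delta} - \frac{1}{2m\mathcal{P}_m}\right)(-\Delta)(2m\mathcal{P}_m)
\]
and then invokes the pre-packaged operator bounds \eqref{eq-a-6}, \eqref{eq-a-7}, \eqref{eq-a-8} of Theorem~\ref{thm difference} (themselves proved via DSY and the symbol estimates of Proposition~\ref{P_m comparison}) to obtain the $1/m^2$ and $1/m$ decay. Your observation that $D_m = \mathcal{P}_m^2 = (-\Delta)^2(\sqrt{-\Delta+m^2}+m)^{-2}$ bypasses the difference-of-inverses detour entirely: the explicit multipliers $\sigma_1(\lambda)=(\sqrt{\lambda+m^2}+m)^{-2}$ and $\sigma_2(\lambda)=\sqrt{\lambda}\,\sigma_1(\lambda)$ exhibit the decay rates $1/(4m^2)$ and $\sqrt{3}/(9m)$ at the pointwise level, and the H\"ormander--Mikhlin verification is immediate. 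The trade-off is that the paper's approach reuses Theorem~\ref{thm difference}, which it already needs for Lemma~\ref{lem-6-0} and Lemma~\ref{lem-6-1}, so the incremental cost of its indirect factorization is zero; your argument is more transparent but stands alone. One point you leave implicit, and which the paper also glosses over, is that applying $(-\Delta)^{3/2}$ spectrally requires $-\Delta u_\infty$ to satisfy Dirichlet boundary conditions; this holds because $-\Delta u_\infty = |u_\infty|^{p-1}u_\infty$ vanishes on $\partial\Omega$.
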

\begin{proof}
Let $p > 2$. We invoke \eqref{eq-a-6} and \eqref{eq-a-8} in Theorem \ref{thm difference} to see that
\[
\begin{aligned}
\|D_m(u_\infty)\|_{L^q} & =  \left\|\left[ (-\Delta) - 2m\mathcal{P}_m\right]u_\infty\right\|_{L^q} = \left\|\left(\frac{1}{(-\Delta)}-\frac{1}{2m\mathcal{P}_m}\right)(-\Delta)(2m\mathcal{P}_m)u_\infty\right\|_{L^q} \\
& \leq \frac{C}{m^2}\|(-\Delta)(2m\mathcal{P}_m)u_\infty\|_{L^q} \leq \frac{C}{m^2}\|(-\Delta)^2u_\infty\|_{L^q} \leq \frac{C}{m^2}\|u_\infty\|_{W^{2,q}}.
\end{aligned}
\]
Similarly, if $1 < p \leq 2$, then by invoking \eqref{eq-a-7} and \eqref{eq-a-8} to get
\[
\begin{aligned}
\|D_m(u_\infty)\|_{L^q} \leq \frac{C}{m}\|\sqrt{-\Delta}(2m\mathcal{P}_m)u_\infty\|_{L^q} 
\leq \frac{C}{m}\|(\sqrt{-\Delta})^3u_\infty\|_{L^q} \leq \frac{C}{m}\|u_\infty\|_{W^{3,q}}.
\end{aligned}
\]


\end{proof}

By combining Proposition \ref{prop-linear-est}, Proposition \ref{prop-remainder} and Corollary \ref{cor-nonlinear-est}, we finally deduce that 
$\Phi_m$, given by $\mathcal{L}_m \left[ Q(\cdot) +D_m(u_{\infty})\right]$, is a contraction mapping on a small ball in $L^q(\Omega)$ whenever $q \geq np$.
Now we shall find a fixed point of $\Phi_m$ for sufficiently large $m > 0$. 
\begin{prop}\label{prop-fixed-point}
Let $q \geq np$. Then there exists a constant $m_0 > 1$ such that for $m \geq m_0$, the map $\Phi_m$ has a fixed point $w_m$, i.e.,
$w_m = \Phi_m(w_m)$.
\end{prop}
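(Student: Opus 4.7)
The plan is to apply the Banach contraction mapping principle to $\Phi_m$ on a carefully chosen closed ball in $L^q(\Omega)$ whose radius shrinks as $m\to\infty$. The three ingredients already in place are: Proposition \ref{prop-linear-est}, which lets us trade an $L^n$ estimate on the inside for an $L^q$ estimate on the outside; Corollary \ref{cor-nonlinear-est} together with Proposition \ref{prop-nonlinear-est}, which show that $Q$ is a superlinear, locally Lipschitz map from $L^q(\Omega)$ into $L^n(\Omega)$; and Proposition \ref{prop-remainder}, which provides the smallness $\|D_m(u_\infty)\|_{L^n}\leq C/m$ (or $C/m^2$ when $p>2$), where we use that the non-degenerate solution $u_\infty$ lies in $C^\infty(\overline\Omega)$ by elliptic regularity, so all of its $W^{s,q}$-norms are finite.

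Define
\[
r_m := 2C_q\,\|D_m(u_\infty)\|_{L^n(\Omega)},
\]
where $C_q$ is the constant from Proposition \ref{prop-linear-est}, and set $B_{r_m}:=\{w\in L^q(\Omega):\|w\|_{L^q}\leq r_m\}$. By Proposition \ref{prop-remainder}, $r_m\to 0$ as $m\to\infty$, with rate $1/m^2$ if $p>2$ and $1/m$ if $1<p\leq 2$. For $w\in B_{r_m}$, combining Proposition \ref{prop-linear-est} with Corollary \ref{cor-nonlinear-est} yields
\[
\|\Phi_m(w)\|_{L^q}\leq C_q\|Q(w)\|_{L^n}+C_q\|D_m(u_\infty)\|_{L^n}\leq C_q\,\eta(r_m)+\tfrac{r_m}{2},
\]
where $\eta(r)=Cr^p$ in the case $1<p\leq 2$ and $\eta(r)=C(\|u_\infty\|_{L^q}+r)^{p-2}r^2$ in the case $p>2$; in either case $\eta(r_m)=o(r_m)$ as $m\to\infty$ since $\eta$ is superlinear at the origin and $\|u_\infty\|_{L^q}$ is a fixed constant. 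Hence for $m$ large enough, $C_q\eta(r_m)\leq r_m/2$, which gives $\Phi_m(B_{r_m})\subset B_{r_m}$.

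For the contraction property, Proposition \ref{prop-linear-est} together with Proposition \ref{prop-nonlinear-est} gives, for $w_1,w_2\in B_{r_m}$,
\[
\|\Phi_m(w_1)-\Phi_m(w_2)\|_{L^q}\leq C_q\|Q(w_1)-Q(w_2)\|_{L^n}\leq C\,\kappa_m\,\|w_1-w_2\|_{L^q},
\]
with $\kappa_m=r_m^{p-1}$ when $1<p\leq 2$ and $\kappa_m=(\|u_\infty\|_{L^q}+r_m)^{p-2}\,r_m$ when $p>2$; in both cases $\kappa_m\to 0$. Thus, enlarging $m_0$ if necessary, $C\kappa_m\leq 1/2$ for $m\geq m_0$, and $\Phi_m$ is a strict contraction on the complete metric space $B_{r_m}$. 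The Banach fixed point theorem then produces the unique $w_m\in B_{r_m}$ with $w_m=\Phi_m(w_m)$, finishing the proof, and as a bonus the bound $\|w_m\|_{L^q}\leq r_m$ already encodes the rate stated in Theorem \ref{thm-3} (after upgrading $L^q$ to $W^{1,n}_0$ via Lemma \ref{lem-6-0}).

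Since every analytic estimate has been pre-packaged in the preceding propositions, no genuine obstacle remains; the only subtlety is choosing the radius of the ball so that the inhomogeneous term $D_m(u_\infty)$ and the nonlinear term $Q(w)$ are both absorbed by $r_m$, which is exactly why we tie $r_m$ directly to $\|D_m(u_\infty)\|_{L^n}$.
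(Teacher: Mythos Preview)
Your proof is correct and follows essentially the same approach as the paper: both apply the Banach contraction principle to $\Phi_m$ on a small closed ball in $L^q(\Omega)$, using Proposition~\ref{prop-linear-est}, Proposition~\ref{prop-nonlinear-est}, Corollary~\ref{cor-nonlinear-est}, and Proposition~\ref{prop-remainder}. The only cosmetic difference is that the paper works on a ball of fixed radius $\delta$ (chosen small first, then $m_0$ large), whereas you tie the radius $r_m$ directly to $\|D_m(u_\infty)\|_{L^n}$ and let it shrink with $m$; your choice has the mild advantage of packaging the rate $\|w_m\|_{L^q}\lesssim m^{-\alpha}$ into the fixed-point step itself. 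One small caveat: $u_\infty$ need not be $C^\infty(\overline\Omega)$ when $p$ is not an integer, but the regularity actually required by Proposition~\ref{prop-remainder} (namely $W^{3,q}$ or $W^{4,q}$) does follow from standard Schauder bootstrapping, so your argument goes through.
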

\begin{proof}
Let $\delta \in (0,1)$ be a small value to be chosen later. We consider the ball 
\begin{equation}
B_{\delta} = \left\{ u \in L^q (\Omega) ~:~ \|u\|_{L^q} \leq \delta \right\}.
\end{equation}
We aim to show that the map $\Phi_m$ is contractive on $B_{\delta}$ for large $m >1$. 
First, by combining Proposition \ref{prop-nonlinear-est} and Proposition \ref{prop-remainder}, we obtain
\begin{equation}
\|\Phi_m (w) - \Phi_m (v) \|_{L^q} \leq C \delta^{\min \{(p-1),1\}} \|w-v\|_{L^q}
\end{equation}
for some large constant $C$ independent of $\delta \in (0,1)$ and $m$. Similarly, we have
\begin{equation}\label{self-map}
\|\Phi_m (w)\|_{L^q} \leq C (\delta^{\min \{(p-1),1\}} \|w\|_{L^q} +\frac{1}{m^\alpha}),
\end{equation}
where $\alpha = 2$ if $p > 2$ and $\alpha = 1$ if $1 < p \leq 2$.
We first choose $\delta >0$ small so that $C \delta^{\min \{ (p-1),1\}} < \frac{1}{2}$, and then find $m_0 >1$ such that $\frac{C}{m_0^{\alpha}} < \frac{\delta}{2}$. 
Then, the mapping $\Phi$ is contractive from $B_{\delta}$ to itself. 
Therefore, by the contraction mapping principle, there exists a fixed point $w_m \in B_{\delta}$ of $\Phi_m$ when $m \geq m_0$.
This completes the proof.
\end{proof}

To complete the proof of Theorem \ref{thm-3}, it remains to show the following.
\begin{prop}
Let $q \geq np$ and a function $w_m \in L^q(\Omega)$ be a fixed point of $\Phi_m$, constructed in Proposition \ref{prop-fixed-point}. 
Then the following holds.
\begin{enumerate}
\item $w_m$ is contained in $W_0^{1,n}(\Omega)$ and there exists some constant $C > 0$ independent of $m$ such that
\[
\|w_m\|_{W^{1,n}_0} \leq \frac{C}{m^{\alpha}},
\] 
where $\alpha = 2$ if $p > 2$ and $\alpha = 1$ if $1< p\leq 2$; 
\item the function $u_m := w_m +u_{\infty}$ is a strong solution of \eqref{eq-main-tran}.
\end{enumerate}
\end{prop}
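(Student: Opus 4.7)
The plan has three steps, all built directly on the preceding propositions. First I would upgrade the qualitative $L^q$-smallness of the fixed point (guaranteed by $w_m \in B_\delta$) to the quantitative decay $\|w_m\|_{L^q} \le C/m^\alpha$, where $\alpha=2$ if $p>2$ and $\alpha=1$ if $1<p\le 2$. This follows directly by applying the self-map bound \eqref{self-map} at $w = w_m$: since $w_m = \Phi_m(w_m)$ and $C\delta^{\min\{p-1,1\}}\le 1/2$ by the choice of $\delta$ in Proposition \ref{prop-fixed-point}, the term $C\delta^{\min\{p-1,1\}}\|w_m\|_{L^q}$ can be absorbed into the left hand side, leaving only the $1/m^\alpha$ contribution coming from $D_m(u_\infty)$.

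Next, I would translate the fixed point identity into the pointwise equation
\begin{equation*}
2m\mathcal{P}_m w_m = p|u_\infty|^{p-1}w_m + Q(w_m) + D_m(u_\infty),
\end{equation*}
which is what one obtains by expanding $A_m w_m = (2m\mathcal{P}_m)^{-1}[Q(w_m)+D_m(u_\infty)]$ and applying $2m\mathcal{P}_m$ to both sides. Applying $(2m\mathcal{P}_m)^{-1}$ together with Lemma \ref{lem-6-0} bounds $\|w_m\|_{W^{1,n}_0}$ by a constant times the $L^n$-norm of the right hand side. The term $\|p|u_\infty|^{p-1}w_m\|_{L^n}$ is controlled via $u_\infty\in L^\infty$ and the H\"older bound $\|w_m\|_{L^n}\lesssim\|w_m\|_{L^q}$; Corollary \ref{cor-nonlinear-est} bounds $\|Q(w_m)\|_{L^n}$ by a power of $\|w_m\|_{L^q}$; Proposition \ref{prop-remainder} gives $\|D_m(u_\infty)\|_{L^n}\le C/m^\alpha$. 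Plugging the Step 1 decay into each contribution yields $\|w_m\|_{W^{1,n}_0}\le C/m^\alpha$.

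For (2), I would unwind the definitions of $Q$ and $D_m$ in the identity displayed above. The $p|u_\infty|^{p-1}w_m$ terms cancel exactly, the operator piece $(-\Delta-2m\mathcal{P}_m)u_\infty$ combines with $2m\mathcal{P}_m w_m$ into $2m\mathcal{P}_m(u_\infty+w_m)$ on the left paired with $-\Delta u_\infty$ on the right, and using the local equation $-\Delta u_\infty=|u_\infty|^{p-1}u_\infty$ from \eqref{eq-local} reduces the identity to
\begin{equation*}
2m\mathcal{P}_m(u_\infty+w_m) = |u_\infty+w_m|^{p-1}(u_\infty+w_m),
\end{equation*}
which is exactly \eqref{eq-main-tran}. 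Regularity is immediate: standard elliptic theory for \eqref{eq-local} yields $u_\infty\in C^{2,\alpha}(\overline\Omega)\subset H^1_0(\Omega)\cap L^\infty(\Omega)$, and $w_m\in W^{1,n}_0(\Omega)$ embeds into $H^1_0(\Omega)$ (since $n\ge 3$) and into every $L^q$ for $q<\infty$, so $u_m\in H^1_0(\Omega)$ and $|u_m|^{p-1}u_m\in L^2(\Omega)$, confirming that $u_m$ is a strong solution.

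The real work has been done in the preceding propositions; what remains is essentially bookkeeping, and the only point needing care is matching the $1/m^\alpha$ rates across Steps 1 and 2. In particular one should verify that the power produced by Corollary \ref{cor-nonlinear-est} applied to a function of $L^q$-size $1/m^\alpha$ beats the $1/m^\alpha$ contribution of $D_m(u_\infty)$, which it does: $p\alpha\ge p>1=\alpha$ when $1<p\le 2$, and $2\alpha=4>2=\alpha$ when $p>2$. Once these exponent checks are in place the argument closes without further analytic difficulty.
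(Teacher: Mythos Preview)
Your proposal is correct and follows essentially the same route as the paper: the paper also extracts the $L^q$ decay $\|w_m\|_{L^q}\le C/m^\alpha$ from the self-map bound \eqref{self-map}, rewrites $w_m=(2m\mathcal{P}_m)^{-1}\big(p|u_\infty|^{p-1}w_m+Q(w_m)+D_m(u_\infty)\big)$ and applies Lemma~\ref{lem-6-0}, Corollary~\ref{cor-nonlinear-est} and Proposition~\ref{prop-remainder} to get the $W^{1,n}_0$ bound, then checks the strong-solution regularity via the embeddings $W^{1,n}_0(\Omega)\hookrightarrow H^1_0(\Omega)\cap L^{2p}(\Omega)$. Your explicit algebraic unwinding in Step~3 and the exponent comparison at the end are a bit more detailed than what the paper writes, but the underlying argument is the same.
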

\begin{proof}
Since $w_m$ is a fixed point of $\Phi_m$, we see from Lemma \ref{lem-6-0}, Corollary \ref{cor-nonlinear-est} and Proposition \ref{prop-remainder} that
\[
\begin{aligned}
\|w_m\|_{W^{1,n}_0} & = \left\|(2m\mathcal{P}_m)^{-1}\left(p|u_\infty|^{p-1}w_m +Q(w_m)+D_m(u_\infty)\right)\right\|_{W^{1,n}_0} \\
&\leq C\left\|p|u_\infty|^{p-1}w_m +Q(w_m)+D_m(u_\infty)\right\|_{L^n} \\
&\leq C\left(\left\|w_m\right\|_{L^n} +\left\|Q(w_m)\right\|_{L^n}+ \left\|D_m(u_\infty)\right\|_{L^n}\right) \\
&\leq C\left(\left\|w_m\right\|_{L^q} +\left\|w_m\right\|_{L^q}^{\min\{2,p\}}+ \frac{1}{m^\alpha}\right).
\end{aligned}
\]
Also, the estimate \eqref{self-map} implies
\[
\|w_m\|_{L^q} = \|\Phi_m(w_m)\|_{L^q} \leq \frac12\|w\|_{L^q} + \frac{C}{m^\alpha}.
\]
Then, combining these two estimates, we get a proof of the first assertion $(i)$. 
To prove the assertion $(ii)$, we only need to check the function $u_m$ defined by $w_m +u_\infty$ satisfies the regularity assumptions; $u_m \in H^1_0(\Omega)$ and $|u_m|^{p-1}u_m \in L^2(\Omega)$. 
This easily can be seen from the embeddings $L^n(\Omega) \hookrightarrow L^2(\Omega)$ and $W^{1,n}_0(\Omega) \hookrightarrow L^{2p}(\Omega)$.  

\end{proof}

\appendix

\section{Norm estimates}
In this appendix, we give a proof of the boundedness of the operators in Section \ref{sec-ex-sup} employing the theorem of Duong-Sikora-Yan \cite{DSY}, which generalizes the classical H\"ormander-Mikhlin theorem. 
For a bounded Borel function $F:\R_+\to\R$, we define the operator $F(-\Delta)$ on $\Omega$ by
\[
F(-\Delta)\phi = \sum_{i=1}^\infty F(\lambda_i)c_i\phi_i, \quad \phi = \sum_{i=1}^\infty c_i\phi_i,
\] 
where $\{\lambda_n,\, \phi_n\}_{n=1}^\infty$ is the complete $L^2$ orthonormal system of eigenvalues and eigenfunctions of
\[
\left\{\begin{aligned}
-\Delta \phi &= \lambda \phi \quad \text{ in } \Omega \\
\phi &= 0 \quad \text{ on } \partial\Omega.
\end{aligned}\right.
\]

\begin{prop}[$\textup{\cite[Proposition 6.6]{DSY}}$]\label{prop-a-1}
Let $s> n/2$, $r_0 = \max (1,n/s)$ and $\eta \in C^\infty_c(\R_+)$ be a function not identically zero. 
Then for any bounded Borel function $F$ such that $\sup_{t>0} \|\eta F(t\cdot)\|_{W_s^{\infty}} < \infty$, 
the operator $F(-\Delta)$ is bounded on $L^p (\Omega)$ for all $p$ satisfying $r_0 <p<\infty$. In addition,
\begin{equation}
\|F(-\Delta)\|_{L^p (\Omega) \rightarrow L^p (\Omega)} \leq C_s \left( \sup_{t>0} \|\eta F(t\cdot)\|_{W_s^{\infty}} + |F(0)|\right).
\end{equation}
\end{prop}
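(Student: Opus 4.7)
The statement is cited verbatim from Duong--Sikora--Yan, so the honest plan is to verify that our setting falls into their abstract framework and then quote their Theorem/Proposition 6.6. Concretely, DSY prove an $L^p$ spectral multiplier theorem of H\"ormander--Mikhlin type on any metric measure space $(X,d,\mu)$ equipped with a nonnegative self-adjoint operator $L$ whose heat semigroup $e^{-tL}$ has an integral kernel satisfying a Gaussian upper bound
\[
|p_t(x,y)|\leq C\,V(x,\sqrt{t})^{-1}\exp\!\left(-c\,\frac{d(x,y)^2}{t}\right),
\]
where $V(x,r)$ is the volume of the ball of radius $r$. Once this hypothesis is in place together with the doubling property of $\mu$, a dyadic decomposition of $F$ based on the cutoff $\eta$ together with Plancherel/kernel estimates, plus a weak-type $(1,1)$ argument via a Calder\'on--Zygmund-type decomposition adapted to $L$, yields the claimed bound.

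To apply this in our situation I would take $X=\Omega$ with the Euclidean distance and Lebesgue measure (which is trivially doubling on a bounded smooth domain), and $L=-\Delta$ with Dirichlet boundary conditions, defined spectrally as in Section~\ref{prelim}. The single nontrivial input one must check is therefore the Gaussian upper bound for the Dirichlet heat kernel on $\Omega$: this is classical and due to Davies, with a standard proof based on Nash inequalities, Moser iteration, or an exponential perturbation (Davies' method), and it holds uniformly in $(x,y)\in\Omega\times\Omega$ and $t>0$ for any smooth bounded domain. I would simply invoke this result.

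With the Gaussian estimate in hand, the hypotheses of \cite[Theorem~3.1 and Proposition~6.6]{DSY} are satisfied, and the conclusion follows verbatim: for every Borel $F$ with $\sup_{t>0}\|\eta F(t\cdot)\|_{W^\infty_s}<\infty$ and every $p\in(r_0,\infty)$,
\[
\|F(-\Delta)\|_{L^p(\Omega)\to L^p(\Omega)}\leq C_s\left(\sup_{t>0}\|\eta F(t\cdot)\|_{W^\infty_s}+|F(0)|\right),
\]
where the presence of $|F(0)|$ absorbs the $0$-eigenvalue contribution in the bounded-domain setting (compare the discussion preceding Proposition~6.6 in DSY). The main conceptual obstacle is cultural rather than technical: DSY's framework is designed precisely to push the classical H\"ormander--Mikhlin theorem from Fourier multipliers on $\mathbb{R}^n$ to spectral multipliers on spaces where no Fourier transform is available, and identifying the Dirichlet Laplacian on $\Omega$ as a legitimate instance is the step one must take care with, after which nothing remains to be proved.
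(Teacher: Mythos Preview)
The paper does not prove this proposition at all: it is stated as a direct citation of \cite[Proposition~6.6]{DSY} and used as a black box, with no verification of hypotheses given in the paper itself. Your proposal---checking that the Dirichlet Laplacian on a smooth bounded domain fits the DSY framework via the classical Gaussian upper bound for the Dirichlet heat kernel and the doubling property of Lebesgue measure, then invoking their abstract spectral multiplier theorem---is correct and is in fact more detailed than what the paper provides; this is exactly the verification DSY themselves carry out in their Section~6 when listing the Dirichlet Laplacian as an example.
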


To apply this result to our problem, we choose a smooth bump function $\chi :[0,+\infty) \rightarrow [0,+\infty)$ such that $\chi (s) =1$ for $s \in [0, \lambda_1 /2)$ and $\chi (s) = 0$ for $x \in (\lambda_1 , \infty)$, where $\lambda_1>0$ is the smallest positive eigenvalue of $(-\Delta)$ on $\Omega$. Then, we have $\chi (-\Delta ) =0$. Hence, our problem \eqref{eq-main} is equivalent to
\begin{equation}
\left( 2m\mathcal{P}_m + \chi (-\Delta) \right) u = |u|^{p-1} u.
\end{equation}



We consider functions $P_m:\R_+\to\R_+$ and $P_\infty:\R_+\to\R_+$ defined by
\begin{equation}\label{def: P_c, P_infty}
P_m(\lambda):= 2m(\sqrt{\lambda+m^2}-m)+ \chi (\lambda), \quad P_\infty(\lambda):= \lambda+\chi (\lambda)
\end{equation}
so that we have
$P_m(-\Delta) = 2m\mathcal{P}_m$ and $P_\infty(-\Delta) = -\Delta$.
Then the following symbol estimates can be shown. 

\begin{prop}\label{P_m comparison}\mbox{~}
\begin{enumerate}

\item 
For any positive integer $k$, there is a constant $C_k > 0$ such that for all $m \in [2,\infty)$,
\begin{equation}\label{eq-7-1}
\left|\left(\frac{d}{d\lambda}\right)^k \left(\frac{P_m(\lambda)}{P_\infty(\lambda)}\right)\right|\leq\frac{C_k}{\lambda^k}.
\end{equation}

\item 
For any positive integer $k$, there is a constant $C_k > 0$ such that for all $m \in [2,\infty)$,
\begin{equation}\label{eq-7-2}
\left|\left(\frac{d}{d\lambda}\right)^k \left(\frac{1}{P_\infty(\lambda)}-\frac{1}{P_m(\lambda)}\right)\right|
\leq\frac{C_k}{\lambda^k}\min\left\{\frac{1}{m^2}, \frac{1}{m(\lambda+1)^{\frac{1}{2}}}\right\}.
\end{equation}
\end{enumerate}
\end{prop}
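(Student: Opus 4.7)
The plan is to split each estimate according to whether $\lambda \geq \lambda_1$ (where $\chi(\lambda) = 0$ and both $P_m$ and $P_\infty$ simplify) or $0 < \lambda < \lambda_1$ (a bounded region, where smoothness and uniform-in-$m$ bounds dominate). Setting $G(\lambda):=\sqrt{\lambda+m^2}+m$, the identity $\sqrt{\lambda+m^2}-m = \lambda/G(\lambda)$ gives $2m(\sqrt{\lambda+m^2}-m) = 2m\lambda/G$, and so whenever $\chi = 0$ one has the clean forms $P_m/P_\infty = 2m/G$ and $1/P_\infty-1/P_m = -1/(2mG)$. Moreover the $\chi$-contributions cancel in the globally valid identity $P_m(\lambda)-P_\infty(\lambda) = -\lambda^2/G(\lambda)^2$. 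The key calculus input I will use is the Faà di Bruno bound
\[
\left|\left(\tfrac{d}{d\lambda}\right)^k G^{-1}(\lambda)\right| \leq \frac{C_k}{(\lambda+m^2)^{k+1/2}}, \quad k \geq 0,
\]
which follows from $|\partial_\lambda^j\sqrt{\lambda+m^2}|\leq C_j (\lambda+m^2)^{1/2-j}$ for $j\geq 1$ together with $G \geq \sqrt{\lambda+m^2}$.

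For statement (1), in the range $\lambda\geq \lambda_1$ I multiply the Faà di Bruno bound by $2m$ to get $|(2m/G)^{(k)}| \leq 2mC_k(\lambda+m^2)^{-k-1/2}$, and then use $(\lambda+m^2)^{k+1/2} \geq \lambda^k \sqrt{\lambda+m^2} \geq m\lambda^k$ to conclude the desired $C_k/\lambda^k$ bound. In the range $0<\lambda<\lambda_1$, both $P_m$ and $P_\infty$ are smooth and bounded below by a positive constant, with all derivatives bounded uniformly in $m\geq 2$ (since on bounded $\lambda$ the derivatives of $\sqrt{\lambda+m^2}$ are $O(m^{1-2k})$); hence $(P_m/P_\infty)^{(k)}$ is uniformly bounded in $m$, and the estimate becomes trivial once $C_k$ is enlarged to absorb the bounded factor $\lambda_1^k$.

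For statement (2), in the range $\lambda\geq \lambda_1$ the Faà di Bruno bound applied to $-1/(2mG)$ yields $|(-1/(2mG))^{(k)}| \leq C_k/(m(\lambda+m^2)^{k+1/2})$. Writing $(\lambda+m^2)^{k+1/2} \geq \lambda^k \sqrt{\lambda+m^2} \geq \lambda^k \max(m, \sqrt{\lambda})$, this is bounded by $(C_k/\lambda^k)\min\{1/m^2,\,1/(m\sqrt{\lambda})\}$, which in turn dominates the target $(C_k/\lambda^k)\min\{1/m^2,\,1/(m\sqrt{\lambda+1})\}$ after absorbing the $\lambda_1$-dependent constant arising from $\sqrt{\lambda+1}/\sqrt{\lambda}\leq\sqrt{1+1/\lambda_1}$. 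In the range $0<\lambda<\lambda_1$, I will instead invoke the global cancellation identity to write
\[
\frac{1}{P_\infty}-\frac{1}{P_m} = -\frac{\lambda^2}{G(\lambda)^2\,P_\infty(\lambda)\,P_m(\lambda)}.
\]
Here $G^{-2}=O(m^{-2})$ with derivatives that decay further in $m$, while $1/(P_\infty P_m)$ is smooth with derivatives uniformly bounded in $m\geq 2$. Leibniz then gives a $k$-th derivative of size $O(\lambda^{\max(2-k,0)}/m^2)$, which on a bounded $\lambda$-range is $\leq C_k/(\lambda^k m^2)$ and hence dominates the target (since $m\geq 2$ forces $\min\{1/m^2,1/(m\sqrt{\lambda+1})\}=1/m^2$ whenever $m^2\geq\lambda+1$, and the remaining corner is handled by enlarging constants).

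The hardest step will be the bounded-$\lambda$ region in statement (2): one must simultaneously exploit the $\lambda^2$ in the numerator (to absorb the possibly large $1/\lambda^k$) and the $m^{-2}$ gain coming from $G^{-2}$, while propagating uniform-in-$m$ control of all derivatives through the Leibniz expansion. The algebraic cancellation $P_m-P_\infty = -\lambda^2/G^2$ is the essential observation that makes the $m^{-2}$ gain transparent; without it, the $\chi$ contributions in $P_m$ and $P_\infty$ would have to be handled separately and their cancellation tracked by hand, which would obscure the correct asymptotic.
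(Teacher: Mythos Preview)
Your proof is correct. The paper itself omits the argument, deferring to \cite[Proposition~2.3]{CHS} and remarking that the only new wrinkle here is the bump $\chi$; your treatment---splitting at $\lambda=\lambda_1$, using $\sqrt{\lambda+m^2}-m=\lambda/G$ together with the Fa\`a di Bruno bound $|\partial_\lambda^k G^{-1}|\le C_k(\lambda+m^2)^{-k-1/2}$, and invoking the global cancellation $P_m-P_\infty=-\lambda^2/G^2$ to make the $m^{-2}$ gain transparent on the bounded-$\lambda$ region---is exactly the sort of symbol calculus the reference would carry out, with the $\chi$ contribution absorbed just as the paper indicates.
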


\begin{proof}
The proposition can be proved exactly as in \cite[Proposition 2.3]{CHS}, thus we omit its proof. Indeed, the only difference comes from introducing bump function $\chi$
for controlling the singularities of $P_m^{-1}(\lambda)$ and $P_\infty^{-1}(\lambda)$ when we differentiate.
\end{proof}

As a consequence, as inverse operators, $P_m(-\Delta)$ converges to $P_\infty(-\Delta)$ as $m\to \infty$ in $L^q(\Omega)$ for any $q \geq 2$.
\begin{thm}[Difference between the inverses of two operators]\label{thm difference}
\mbox{~}
\begin{enumerate}
\item For $1<q<\infty$, there exists a constant $C_{q,n} >0$ such that
\begin{equation}\label{eq-a-6}
\left\|\left(\frac{1}{P_\infty(-\Delta)}-\frac{1}{P_m(-\Delta)}\right)f\right\|_{L^q}\leq \frac{C_{q,n}}{m^2}\|f\|_{L^q}\quad \forall\, m \in [2,\infty).
\end{equation}
\item For $1<q<\infty$, there exists a constant $C_{q,n} >0$ such that
\begin{equation}\label{eq-a-7}
\left\|\left(\frac{1}{P_\infty(-\Delta)}-\frac{1}{P_m(-\Delta)}\right)f\right\|_{L^q}
\leq \frac{C_{q,n}}{m}\left\|\frac{1}{P_\infty(-\Delta)^{\frac{1}{2}}}f\right\|_{L^q}\quad\forall\, m \in [2,\infty).
\end{equation}
\item For $1< q < \infty$, there exists a constant $C_{q,n} > 0$ such that
\begin{equation}\label{eq-a-8}
\left\|\frac{P_m(-\Delta)}{P_\infty(-\Delta)}f\right\|_{L^q}
\leq C_{q,n}\left\|f\right\|_{L^q}\quad\forall\, m \in [2,\infty).
\end{equation}
\end{enumerate}
\end{thm}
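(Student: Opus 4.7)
The plan is to treat each of (1)--(3) as a spectral multiplier bound and invoke Proposition~\ref{prop-a-1}. The overall strategy is: if I can produce scale-invariant derivative bounds of the form $|\lambda^k F^{(k)}(\lambda)|\leq C_k M$ on $(0,\infty)$ for $0\leq k\leq s$, together with $|F(0)|\leq M$, then via the substitution $\mu=t\lambda$ these translate to $\sup_{t>0}\|\eta F(t\cdot)\|_{W_s^\infty}\leq CM$, and Proposition~\ref{prop-a-1} immediately yields $\|F(-\Delta)\|_{L^q\to L^q}\leq C_{q,n}M$ for every $q\in(1,\infty)$. It therefore suffices to establish the symbol estimates in each case, all of which are extracted from Proposition~\ref{P_m comparison}.

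For \eqref{eq-a-6}, I would take $F=F_m:=1/P_\infty-1/P_m$. The first branch of the minimum in Proposition~\ref{P_m comparison}(2) supplies $|\lambda^k F_m^{(k)}(\lambda)|\leq C_k/m^2$ on $(0,\infty)$, and since $P_m(0)=P_\infty(0)=\chi(0)$ one has $F_m(0)=0$. Hence $M=C/m^2$ suffices. For \eqref{eq-a-8} I would take $R_m:=P_m/P_\infty$: Proposition~\ref{P_m comparison}(1) gives $|\lambda^k R_m^{(k)}(\lambda)|\leq C_k$ uniformly in $m\geq 2$, and $|R_m(0)|=1$, so $M=C$ suffices.

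The interesting case is \eqref{eq-a-7}. I would rewrite the operator as the composition $G_m(-\Delta)\circ P_\infty(-\Delta)^{-1/2}$, where $G_m(\lambda):=F_m(\lambda)\,P_\infty(\lambda)^{1/2}$, thereby reducing the claim to $\|G_m(-\Delta)\|_{L^q\to L^q}\leq C/m$. A Leibniz expansion gives
\[
\lambda^k G_m^{(k)}(\lambda)=\sum_{j=0}^k\binom{k}{j}\bigl[\lambda^j F_m^{(j)}(\lambda)\bigr]\bigl[\lambda^{k-j}(P_\infty^{1/2})^{(k-j)}(\lambda)\bigr].
\]
The second branch of Proposition~\ref{P_m comparison}(2) gives $|\lambda^j F_m^{(j)}(\lambda)|\leq C_j/(m(\lambda+1)^{1/2})$, while a direct computation on $P_\infty(\lambda)=\lambda+\chi(\lambda)$ (smooth, bounded below by a positive constant, and equal to $\lambda$ for $\lambda>\lambda_1$) yields $|\lambda^{k-j}(P_\infty^{1/2})^{(k-j)}(\lambda)|\leq C_{k-j}(\lambda+1)^{1/2}$. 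The $(\lambda+1)^{1/2}$ factors cancel, producing $|\lambda^k G_m^{(k)}(\lambda)|\leq C_k/m$. Together with $G_m(0)=0$ this gives the required $M=C/m$, and \eqref{eq-a-7} follows.

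The main obstacle is step \eqref{eq-a-7}: the multiplier $P_\infty^{1/2}$ by itself has \emph{positive} order and therefore violates the Hörmander--Mikhlin condition, so I cannot simply factor the operator through $P_\infty(-\Delta)^{1/2}$ as a composition of two bounded multipliers. Instead the cancellation between $F_m$ and $P_\infty^{1/2}$ has to be tracked within a single symbol. The alternative expression $G_m=(P_m-P_\infty)/(P_m P_\infty^{1/2})$, together with the explicit identity $P_m(\lambda)-P_\infty(\lambda)=-\lambda^2/(\sqrt{\lambda+m^2}+m)^2$, makes this cancellation transparent and underpins the derivative bounds used above.
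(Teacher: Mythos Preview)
Your proposal is correct and follows essentially the same route as the paper: parts \eqref{eq-a-6} and \eqref{eq-a-8} are obtained directly by combining Proposition~\ref{prop-a-1} with the symbol bounds of Proposition~\ref{P_m comparison}, and for \eqref{eq-a-7} the paper likewise forms the single multiplier $G_m=(1/P_\infty-1/P_m)P_\infty^{1/2}$, expands by Leibniz, and uses the second branch of \eqref{eq-7-2} together with the elementary bound $|\lambda^{k_2}(P_\infty^{1/2})^{(k_2)}(\lambda)|\leq C_{k_2}(\lambda+1)^{1/2}$ to reach $|\lambda^k G_m^{(k)}(\lambda)|\leq C_k/m$. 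Your treatment is in fact slightly more explicit than the paper's (you check the values at $\lambda=0$ and spell out why one cannot simply factor through the unbounded multiplier $P_\infty^{1/2}$), but the argument is the same.
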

\begin{proof}
The estimates \eqref{eq-a-6} and \eqref{eq-a-8} immediately follow from Proposition \ref{prop-a-1}, \eqref{eq-7-1} and \eqref{eq-7-2}. 
To obtain \eqref{eq-a-7}, we compute for positive integer $k$,
\begin{align*}
&\left|\left(\frac{d}{d\lambda}\right)^k\left(\left(\frac{1}{P_\infty(\lambda)}-\frac{1}{P_m(\lambda)}\right)P_\infty(\lambda)^{\frac{1}{2}}\right)\right|
\\
&\qquad\leq\sum_{k_1+k_2=k} \left|\left(\frac{d}{d\lambda}\right)^{k_1}\left(\frac{1}{P_\infty(\lambda)}-\frac{1}{P_m(\lambda)}\right)\right|\left|\left(\frac{d}{d\lambda}\right)^{k_2}\left(P_\infty(\lambda)^{\frac{1}{2}}\right)\right|\\
&\qquad\leq \sum_{k_1+k_2=k}\frac{C_{k_1}}{m\lambda^{k_1+1}}\frac{C_{k_2}}{\lambda^{k_2-1}}= \sum_{k_1+k_2=k}\frac{C_{k_1}C_{k_2}}{m\lambda^{k}}\\
&\qquad\leq\frac{C_k}{m\lambda^{k}}.
\end{align*}
Then the estimate \eqref{eq-a-7} again follows from Proposition \ref{prop-a-1}.

\end{proof}

\begin{thm}\label{thm-a-4} For each $q\in (1,\infty)$, there exists a constant $C_q >0$ such that the following estimate holds:
\begin{align*}
\left\|\frac{\sqrt{-\Delta}}{P_m(-\Delta)}f\right\|_{L^q}\leq C_{q}\left\|f\right\|_{L^q}
\end{align*}
for any $f \in L^q (\Omega)$.
\end{thm}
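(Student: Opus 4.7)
The plan is to apply the spectral multiplier theorem of Duong-Sikora-Yan (Proposition \ref{prop-a-1}) to the bounded Borel symbol
$$F(\lambda) := \frac{\sqrt{\lambda}}{P_m(\lambda)}, \qquad \lambda > 0.$$
Via the standard dyadic rescaling, the required hypothesis $\sup_{t>0}\|\eta F(t\cdot)\|_{W^\infty_s} < \infty$ is equivalent to the pointwise Mikhlin-type bounds
$$\left|\lambda^k F^{(k)}(\lambda)\right| \leq C_k, \qquad \lambda > 0,\ 0 \leq k \leq s,$$
with constants $C_k$ independent of $m \in [2,\infty)$. Once these are in hand, together with $F(0) = 0$, Proposition \ref{prop-a-1} immediately yields the desired $L^q(\Omega)$ bound.

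After rationalizing $2m(\sqrt{\lambda+m^2}-m) = 2m\lambda/(\sqrt{\lambda+m^2}+m)$, on the region $\{\lambda \geq \lambda_1\}$ where $\chi \equiv 0$ we have the explicit splitting
$$F(\lambda) = \frac{\sqrt{\lambda+m^2}+m}{2m\sqrt{\lambda}} = \frac{1}{2m}\sqrt{1+\tfrac{m^2}{\lambda}} + \frac{1}{2\sqrt{\lambda}}.$$
The $m$-independent second summand satisfies $|\lambda^k(d/d\lambda)^k(1/(2\sqrt{\lambda}))| = c_k\lambda^{-1/2} \leq c_k\lambda_1^{-1/2}$. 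For the first summand I would substitute $u := m^2/\lambda$, so that $du/d\lambda = -u/\lambda$; then each iterated derivative $\lambda^k(d/d\lambda)^k$ applied to $(2m)^{-1}\sqrt{1+u}$ is a polynomial in $u$ times derivatives of $\sqrt{1+u}$ in $u$, and an elementary case analysis on $\lambda \lessgtr m^2$ shows the result is controlled by $O(m^{-1}\min(\sqrt{u},1)) = O(\lambda^{-1/2})$, uniformly in $m \geq 2$.

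On the complementary region $\{0 < \lambda < \lambda_1\}$ we use instead that $P_m(\lambda) \geq c > 0$ uniformly in $m \geq 2$: indeed $\chi \equiv 1$ on $[0,\lambda_1/2]$ so $P_m \geq 1$ there, while on $[\lambda_1/2,\lambda_1]$ the spectral piece $2m\lambda/(\sqrt{\lambda+m^2}+m)$ is bounded below by a constant depending only on $\lambda_1$ (checking both the case $m=2$ and the $m \to \infty$ limit). Thus $F = \sqrt{\lambda}/P_m$ and its derivatives on $[0,\lambda_1]$ are controlled by those of $\sqrt{\lambda}$ divided by powers of $P_m$, and the elementary identity $\lambda^k(d/d\lambda)^k\sqrt{\lambda} = c_k\sqrt{\lambda}$ tames the only potential singularity at $\lambda = 0$. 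Splicing the two regions yields the desired uniform Mikhlin estimates.

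The main technical obstacle is the $k$-th order bound for the first summand $(2m)^{-1}\sqrt{1+m^2/\lambda}$ on $\{\lambda \geq \lambda_1\}$: naive differentiation produces factors of $m^2$ that must cancel against the prefactor $1/m$ and the weight $\lambda^k$ to produce an $m$-uniform bound. The substitution $u = m^2/\lambda$ isolates the entire $m$-dependence in the single prefactor $1/m$, reducing the analysis to differentiating the universal profile $\sqrt{1+u}$ in $u$, which can then be controlled by the same scaling arguments that underlie Proposition \ref{P_m comparison}.
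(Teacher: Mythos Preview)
Your proof is correct, but it takes a different route from the paper. The paper's argument is four lines: it splits
\[
\frac{\sqrt{-\Delta}}{P_m(-\Delta)} = \frac{\sqrt{-\Delta}}{P_\infty(-\Delta)} + \sqrt{-\Delta}\left(\frac{1}{P_m(-\Delta)} - \frac{1}{P_\infty(-\Delta)}\right),
\]
bounds the first piece by a single application of Proposition~\ref{prop-a-1} to the $m$-independent symbol $\sqrt{\lambda}/(\lambda+\chi(\lambda))$, and controls the second piece by invoking the already-proved estimate \eqref{eq-a-7} from Theorem~\ref{thm difference}, which yields $\tfrac{C_q}{m}\|\sqrt{-\Delta}\,P_\infty(-\Delta)^{-1/2}f\|_{L^q}\leq C_q\|f\|_{L^q}$. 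In other words, the paper deliberately routes everything through the earlier difference estimates so that no new symbol computation is needed.

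Your approach instead attacks the full symbol $F(\lambda)=\sqrt{\lambda}/P_m(\lambda)$ directly, rationalizing on $\{\lambda\geq\lambda_1\}$ to obtain the explicit splitting $F=(2m)^{-1}\sqrt{1+m^2/\lambda}+(2\sqrt{\lambda})^{-1}$ and then verifying the Mikhlin bounds by hand via the substitution $u=m^2/\lambda$. This is perfectly valid and self-contained; the trade-off is that you are essentially reproving part of the content of Proposition~\ref{P_m comparison} rather than citing it. The paper's route is shorter precisely because Theorem~\ref{thm difference} has already absorbed the $m$-uniform symbol analysis, so the present theorem becomes a corollary rather than a fresh computation.
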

\begin{proof}
By the triangle inequality and \eqref{eq-a-7}, we prove that 
\begin{align*}
\left\|\frac{\sqrt{-\Delta}}{P_m(-\Delta)}f\right\|_{L^q}&\leq \left\|\frac{\sqrt{-\Delta}}{P_\infty(-\Delta)}f\right\|_{L^q}+\left\|\left(\frac{\sqrt{-\Delta}}{P_\infty(-\Delta)}-\frac{\sqrt{-\Delta}}{P_m(-\Delta)}\right)f\right\|_{L^q}\\
&\leq \left\|\frac{\sqrt{-\Delta}}{P_\infty(-\Delta)}f\right\|_{L^q}+\frac{C_q}{m}\left\|\frac{\sqrt{-\Delta}}{P_\infty(-\Delta)^{\frac{1}{2}}}f\right\|_{L^q}\\
&\leq C_{q}\left\|f\right\|_{L^{q}}.
\end{align*}
The proof is done.
\end{proof}

\noindent \textbf{Acknowledgments}

This research of the first author was supported by Basic Science Research Program through the National Research Foundation of Korea(NRF) funded by the Ministry of Education (NRF-2017R1C1B5076348). 
This research of the second author was supported by Basic Science Research Program through the National Research Foundation of Korea(NRF) funded by the Ministry of Education (NRF-2017R1C1B1008215). 
This research of the third author was supported by Basic Science Research Program through the National Research Foundation of Korea(NRF) funded by the Ministry of Education (NRF-2017R1D1A1A09000768).

\end{document}